\newtheorem{conj}{Conjecture}[section]
\newtheorem{thm}[conj]{Theorem}
\newtheorem{rem}[conj]{Remark}
\newtheorem{lem}[conj]{Lemma}
\newtheorem{prop}[conj]{Proposition}
\newtheorem{coro}[conj]{Corollary}
\newtheorem{ques}[conj]{Question}
\newtheorem{defn}[conj]{Definition}
\newtheorem{cor}[conj]{Corollary}
\newtheorem*{theorem*}{Theorem}
\newcommand{\f}{\varphi}
\newcommand{\calA}{\mathcal{A}}
\newcommand{\calB}{\mathcal{B}}
\newcommand{\N}{\mathbb{N}}
\renewcommand{\L}{\mathcal{L}}
\newcommand{\bc}{\begin{center}}
\newcommand{\ec}{\end{center}}
\newcommand{\bt}{\begin{tabular}}
\newcommand{\et}{\end{tabular}} 
\newcommand{\bea}{\begin{eqnarray}}
\newcommand{\eea}{\end{eqnarray}}
\newcommand{\bean}{\begin{eqnarray*}}
\newcommand{\eean}{\end{eqnarray*}}
\newcommand{\ba}{\begin{array}}
\newcommand{\ea}{\end{array}}
\def\be{\begin{eqnarray}}
\def\ee{\end{eqnarray}}
\def\ben{\begin{eqnarray*}}
\def\een{\end{eqnarray*}}
\newcommand{\ra} {\rightarrow}
\newcommand{\RL}{{\mathbb R}}
\newcommand{\calM}{\mbox{${\cal M}$}}
\newcommand{\Rpl}{{\mathbb R}_{+}}
\newcommand{\dou}{\partial}
\newcommand{\eps}{\epsilon}
\def\elabel#1{\label{e:#1}}
\def\sq{$\Box$}
\def\qed{\ifmmode\sq\else{\unskip\nobreak\hfil
\penalty50\hskip1em\null\nobreak\hfil\sq
\parfillskip=0pt\finalhyphendemerits=0\endgraf}\fi\par\medbreak}
\newsavebox{\junk}
\savebox{\junk}[1.6mm]{\hbox{$|\!|\!|$}}
\def\det{{\mathop{\rm det}}}
\newcommand{\one}{\hbox{\rm\large\textbf{1}}}
\def\til={{\widetilde =}}
 \def\eq#1/{(\ref{#1})}
\def\eq#1/{(\ref{e:#1})}
\newcommand{\beqn}[1]{\notes{#1}%
\begin{eqnarray} \elabel{#1}}
\newcommand{\eeqn}{\end{eqnarray} }
\newcommand{\beq}[1]{\notes{#1}%
\begin{equation}\elabel{#1}}
\newcommand{\eeq}{\end{equation}} 
\def\bdes{\begin{description}}
\def\edes{\end{description}}
\def\notes#1{}
\def\L{L_\infty}
\newcommand{\setS}{S}
\newcommand{\setT}{T}
\newcommand{\collS}{\mathcal{C}}
\newcommand{\qedwhite}{\hfill \ensuremath{\Box}}
\newcommand{\E}{\mathcal{E}}
\def\phi{\varphi}
\def\L{{\cal L}}
\def\bee{\begin{eqnarray*}}
\def\ene{\end{eqnarray*}}
\renewcommand{\eps}{\varepsilon}
\newcommand{\calK}{\mathcal{K}}
\title{On the volume of the Minkowski sum of zonoids}
\author{Matthieu Fradelizi\thanks{supported in part by the Agence Nationale de la Recherche, project GeMeCoD (ANR 2011 BS01 007 01).}, 
Mokshay Madiman\thanks{supported in part by the U.S. National Science Foundation through grants DMS-1409504.}, 
Mathieu Meyer
and Artem Zvavitch\thanks{supported in part by the U.S. National Science
Foundation Grant DMS-1101636, Simons Foundation, the CNRS and the B\'ezout Labex.}}
\date{\today}
\begin{document}

\maketitle

\begin{abstract} 
We explore some inequalities in convex geometry restricted to the class of zonoids. We show the equivalence, in the class of zonoids, between a local Alexandrov-Fenchel inequality, a local Loomis-Whitney inequality, the log-submodularity of volume, and the Dembo-Cover-Thomas conjecture on the monotonicity of the ratio of volume to the surface area. In addition to these equivalences, we confirm these conjectures in $\RL^3$ and we establish an improved inequality in $\RL^2$. Along the way, we give a negative answer to a question of Adam Marcus regarding the roots of the Steiner polynomial of zonoids. We also investigate analogous questions in the $L_p$-Brunn-Minkowski theory, and in particular, we confirm all of the above conjectures in the case $p=2$, in any dimension.
\end{abstract}


\tableofcontents

\section{Introduction}


The deep parallels between inequalities in Information Theory and Convex Geometry have been explored intensively. The recognition of these parallels goes back to at least 1984, when Costa and Cover \cite{CC84} observed the formal resemblance between the Brunn-Minkowski inequality in Convex Geometry and the entropy power inequality in Information Theory.
The Brunn-Minkowski inequality (see, e.g., \cite{Gar02, Sch14:book})
states that for all compact sets $A,B$ in $\RL^n$,
$|A+B|^{1/n} \geq |A|^{1/n} + |B|^{1/n}$, where we write $|A|$ for the  volume of $A$.
The entropy power inequality (see, e.g., \cite{CT91:book}) states that for any pair of independent random vectors $X, Y$ in $\RL^n$,
$N(X+Y) \geq N(X) + N(Y)$,
where
$N(X) = e^{\frac{2h(X)}{n}}$
denotes the entropy power of $X$, and  
$h(X) = -\int f(x) \log f(x) dx$ is the entropy of $X$ if $X$ has density $f$ (and $h(X)=- \infty$ if not). Despite the apparent distinctness of the settings, a compelling case can be made (see, e.g., \cite{DCT91, SV00, Gar02, WM14, MMX17:0}) that these inequalities are deeply connected -- in particular, it is now well understood that the functional $A\mapsto |A|^{1/n}$ in the geometry of compact subsets of $\RL^n$, and the functional $f_X\mapsto N(X)$ in probability are analogous in many (but not all) ways. 
In the last decade, several further developments have been made that link 
Information Theory to the Brunn-Minkowski theory, including entropy analogues of Blaschke-Santal\'o's inequality \cite{LYZ04},
 reverse Brunn-Minkowski's inequality \cite{BM11:cras, BM12:jfa},  Rogers-Shephard's inequality \cite{BM13, MK18} and Busemann's inequality \cite{BNT16}.
Indeed, volume inequalities and entropy inequalities (and also certain small ball inequalities \cite{MMX17:1}) can be unified using
the framework of R\'enyi entropies; this framework and the relevant literature is surveyed in \cite{MMX17:0}. 
 
The analogies between Information Theory and Convex Geometry are not so direct when one moves to second-order functionals. In particular, convex geometry analogues of Fisher information inequalities hold sometimes but not always  \cite{FGM03, AFO14, FM14}.
%
Motivated by these analogies, Dembo, Cover and Thomas \cite{DCT91} proposed the inequality, called concavity of the ratio of volume to the surface area,
\begin{equation}\label{conj:strong}
\frac{|A+B|}{|\partial (A+B)|}\geq \frac{|A|}{|\partial A|}+\frac{|B|}{|\partial B|}
\end{equation}
as a natural analogue dual to the Fisher information inequality. It was noticed already in \cite{DCT91} (see also \cite{GHP02}) that \eqref{conj:strong} is true when $A$ (or $B$) is an Euclidean Ball, moreover,  in \cite{FGM03}, it was proved that this last inequality holds for any convex bodies $A$ and $B$ in $\RL^2$. It is interesting to note that even a weaker conjecture, called monotonicity of the ratio of volume to the surface area,
\begin{equation}\label{conj:weak}
\frac{|A+B|}{|\partial (A+B)|}\geq \frac{|A|}{|\partial A|}
\end{equation}
is not trivial.  The case of $B$ being a segment already gives another natural conjecture
\begin{equation}\label{conj:surfproj}
\frac{|\partial(P_{u^\perp}A)|}{|P_{u^\perp}A|}\le \frac{|\partial A|}{|A|},
\end{equation}
where $P_{u^\perp} A$ denotes the orthogonal projection  $A$ onto the hyperplane with normal vector $u\in S^{n-1}$. We  explain  in details the relationships between the above conjectures as parts of Theorems  \ref{thm:logsubmod-equiv} and \ref{th:linear} below.
It was proved in \cite{GHP02} (see also Lemma \ref{lem:GHPnew}  for a simple proof) that (\ref{conj:surfproj}) holds with with a multiplicative constant:
\begin{equation}\label{eq:surfproj}
\frac{|\partial(P_{u^\perp}A)|}{|P_{u^\perp}A|}\le \frac{2(n-1)}{n} \frac{|\partial A|}{|A|},
\end{equation}
Moreover, it  was shown in  \cite{FGM03} that  the constant is sharp. Thus, there are counterexamples to (\ref{conj:surfproj})  in $\RL^n$ for any $n\ge3$ and inequalities (\ref{conj:strong}) and (\ref{conj:weak}) are not true in general for the whole class of convex bodies in $\RL^n$, $n\ge 3$.
Conjecture (\ref{conj:strong}) is connected with the following conjectured inequality for the volume of projections:
\begin{equation}\label{constrong}
\frac{|A+B|}{|P_{u^\bot} (A+B)|_{n-1}}\geq \frac{|A|}{|P_{u^\bot}A|_{n-1}}+\frac{|B|}{|P_{u^\bot} B|_{n-1}}.
\end{equation}
Bonnesen proved in \cite{Bon29:book} (see  \cite{Sch14:book} equation (7.196)) that, for any convex bodies $A$, $B$ in $\RL^n$,
\begin{equation}\label{eq:bonnesen}
|A+B|\ge \left(|P_{u^\bot} A|_{n-1}^\frac{1}{n-1}+|P_{u^\bot} B|_{n-1}^\frac{1}{n-1}\right)^{n-1} \left(\frac{|A|}{|P_{u^\bot}A|_{n-1}}+\frac{|B|}{|P_{u^\bot} B|_{n-1}}\right),
\end{equation}
which is \eqref{constrong} for $n=2$. 
The fact that (\ref{conj:strong}) is true in dimension $2$ and in the case when one of the bodies is an Euclidean ball inspires the natural conjecture that (\ref{conj:strong}) holds for zonoids. Recall that zonoids are Hausdorff limits of zonotopes and that zonotopes are Minkowski sums of segments. Our main goal, in this paper, is to study a weaker version of this conjecture: is it true that for two zonoids $A,B $ in $\RL^n$ we have
\begin{equation}\label{weak}
\frac{|A+B|}{|P_{u^\bot} (A+B)|_{n-1}}\geq \frac{|A|}{|P_{u^\bot}A|_{n-1}}?
\end{equation}
Note that \eqref{weak} is again not true for general convex bodies and $n\ge3$. We prove it for zonoids and $n=3$. We  also present a number of equivalent and useful restatements of \eqref{weak}.

The second main contribution of this paper has to do with an analogue of the Pl\"unnecke-Ruzsa \cite{Plu70, Ruz89}  inequality for zonoids, or equivalently, the log-submodularity property of volume on the space of zonoids with respect to Minkowski summation. More precisely, we study the following conjecture from \cite{FMZ22}:
given zonoids $A, B_1, B_2$ in $\RL^{n}$, one has
\begin{equation}\label{eq:submod}
|A|\, |A+B_1+B_2 |  \,\leq\,  |A+B_1|\, |A+B_2|.
\end{equation}
This conjecture was inspired by Bobkov and the second-named author \cite{BM12:jfa} who proved that for convex bodies $A, B_1, B_2$ in $\RL^{n}$, one has
\[
|A|\, |A+B_1+B_2 |  \leq 3^n  |A+B_1|\, |A+B_2|.
\]
Recently, it was proved in \cite{FMZ22} that the constant $3^n$ in the preceding inequality may be replaced by $\varphi^{n}$, where $\varphi=(1+\sqrt{5})/2$ is the golden ratio, that the best constant $c_n$ is lower bounded by $(4/3+o(1))^n$ and that $c_2=1$ and $c_3= 4/3$. 
These observations for dimensions $n\ge3$ imply that, for general convex bodies, it is impossible to have \eqref{eq:submod}.
In this paper, we prove that \eqref{eq:submod} holds in $\RL^3$ for zonoids.

This paper is organized as follows. In Section~\ref{sec:prelim}, we collect background material on mixed volumes.  Numerous equivalent descriptions of log-submodularity, on a given class of convex bodies, are explored in Section~\ref{sec:sublog}.
In Section~\ref{sec:special}, we first present an improved version of $\log$-submodularity in $\RL^2$ by proving a strong version of two dimensional conjecture of T.~Courtade, and we then prove inequality (\ref{weak}) in the special case of paralleletopes. Section~\ref{sec:zonoid}
explores log-submodularity of volume on the class of zonoids-- we first prove that this holds in
$\RL^3$, and then discuss an array of inequalities that are equivalent to log-submodularity
for zonoids in arbitrary dimension (all of which therefore now hold in $\RL^3$).
Along the way, we answer a question of Adam Marcus about Steiner polynomials of zonoids. Finally, in Section~\ref{sec:Lp-z}, inspired by a work of  Brazitikos and McIntyre \cite{BM21:1} on vector-valued Maclaurin inequalities,  we discuss possible extensions of our results to the more general class of $L^p$-zonoids, which appear in the more general $L_p$-Brunn-Minkowski theory.


\vspace{.1in}
\noindent
{\bf Acknowledgments.}
We are indebted to  Shiri Artstein-Avidan, Guillaume Aubrun,  Silouanos Brazitikos, 
Dan Florentin, Dylan Langharst, Ivan Soprunov, and Ramon Van Handel, for a number of  valuable discussions and suggestions.

\section{Preliminaries on mixed volumes}
\label{sec:prelim}


In this section, we introduce basic notations and collect essential facts and definitions 
from convex geometry that are used in the paper. As a general reference on the theory, we use \cite{Sch14:book}.
We write $\langle x,  y\rangle$  for the inner product of vectors $x$ and $y$ in $\RL^n$  and  by $|x|$ the Euclidean norm of a vector $x \in \RL^n$. The closed Euclidean ball in $\RL^n$ is denoted by $B_2^n$, and its boundary by $S^{n-1}$. We  also denote by $e_1, \dots, e_n$ the standard orthonormal basis in $\RL^n$. Moreover, for any set in $K \subset \RL^n$, we denote its boundary by $\partial K$.  A compact set $K$ in $\RL^n$ is called star-shaped if, for every $x\in K$, the segment $[0,x]$ is a subset of $K$; its radial function $\rho_K$ is then defined by $\rho_K(x)=\sup\{a; ax \in K\}$. When $0$ belongs to the interior of $K$ then $\|x\|_K=\rho_K^{-1}(x)$
is the Minkowski functional of $K$. A convex body is a convex, compact set with non-empty interior. For a convex body $K$, we define its support function  by $h_K(x) = \max_{y\in K} \langle x, y\rangle$.

We write $|K|_m$ for the $m$-dimensional Lebesgue measure (volume) of a measurable set $K \subset \RL^n$, where $m = 1, . . . , n$ is the dimension
of the minimal affine space containing $K$, we  often use the  shorten notation  $|K|$ for $n$-dimensional volume.  
From  \cite[Theorem 5.1.6]{Sch14:book}, for any compact convex sets $K_1,\dots, K_r$ in $\RL^n$ and any non-negative numbers $t_1, \dots, t_r$, 
one has
\be\label{eq:mvf}
\left|t_1K_1+\cdots+t_rK_r\right|=
\sum_{i_1,\dots,i_n=1}^rt_{i_1}\cdots t_{i_n}V(K_{i_1},\dots, K_{i_n})
\ee
for some non-negative numbers $V(K_{i_1},\dots, K_{i_n})$, which are  the mixed volumes of $K_{i_1},\dots, K_{i_n}$.
We  also often use a two bodies version of (\ref{eq:mvf}):
\be\label{eq:ste}
\left|A+tB\right|=
\sum_{k=0}^n {n \choose k} t^k V(A[n-k],B[k]),
\ee
for any $t>0$ and compact, convex sets $A, B$ in $\RL^n$, where for simplicity we use notation $A[m]$ for a convex set $A$ repeated $m$ times. 
Mixed volumes satisfy a number of extremely useful inequalities. The first one is the Brunn-Minkowski inequality 
$
|K+L|^{1/n}\ge |K|^{1/n}+|L|^{1/n},$ 
whenever  $K, L$ and $K+L$ are measurable. A direct consequence are   Minkowski's first inequality
\be\label{M1}
V(L, K[n-1])\ge |L|^{1/n}|K|^{(n-1)/n},
\ee
and  Minkowski's second inequality
\be\label{M2}
V(L, K[n-1])^2\ge |K|V(L[2],K[n-2]),
\ee
for two convex, compact subsets $K$ and $L$ in $\RL^n$. 
We will  use the classical integral representation for the mixed  volume:
\begin{equation}\label{eq:mixvol}
V(L, K[n-1])=\frac{1}{n}\int_{S^{n-1}} h_L(u) dS_K(u),
\end{equation}
where  $S_K$ is the surface area measure of $K$ \cite{Sch14:book}.
Mixed volumes are also  useful to study the volume of the orthogonal projections of convex bodies. Let $E$ be an $m$-dimensional subspace of $\RL^n$, for $1\le m \le n$ and let $P_E:\RL^n \to E$ be the orthogonal projection onto $E$.  Then for any convex body $K$ we have 
\be\label{proj}\label{eq:proj}
|U|_{n-m} |P_EK|_m={{n}\choose{m}}V(K[m], U[n-m]),
\ee
where $U$ is any convex body in the subspace $E^\perp$ orthogonal to $E$. It follows from (\ref{eq:proj}) that for any   orthonormal system $u_1, \dots, u_r,$ $1\le r \le n$ we get 
\begin{equation}\label{eq:projvec}
\left|P_{[u_1,\dots,u_r]^\bot}K\right|_{n-r}=\frac{n!}{(n-r)!}V(K[n-r],[0,u_1],\dots, [0,u_r]).
\end{equation}
For example, denote by $u^\perp = \{x \in \RL^n : \langle x, u \rangle =0\}$ the hyperplane orthogonal to a  vector $u\in S^{n-1}$, we obtain
\be\label{eq:proj1}
|P_{u^\perp} K|_{n-1}= n V(K[n-1], [0, u]).
\ee
Another useful formula is connected with the computation of surface area and mixed volumes:
\be\label{surface}
|\partial K|= nV(K[n-1], B_2^n),
\ee
where by $|\partial K|$ we denote the surface area of the convex body  $K$ in $\RL^n$.

A polytope which is the Minkowski sum of
finitely many line segments is called a zonotope. 
Limits of zonotopes in the Hausdorff metric are called zonoids, see \cite[Section 3.2]{Sch14:book} for details. 
Consider zonotopes  $Z_j=\sum_{i_j=1}^{N_j} [0, w_{i_jj}]$, where, $j=1,\dots, n$ and $w_{i_jj} \in \RL^n$. Using the linearity of mixed volumes,  we get that  
\begin{equation}\label{eq:formulazonoids}
V(Z_1, \dots, Z_n)=\sum V([0, w_{i_11}],\dots [0, w_{i_nn}])= \frac{1}{n!}\sum|\det(\{w_{i_jj}\}_{j=1}^n)|,
\end{equation}
where the sums runs over the integers $i_j \in \{1, \dots, N_j\}$, for $j \in \{1, \dots, n\}$. Notice that, if $Z_1=[0,u]$, with $u\in S^{n-1}$, we may use the basic properties of determinants to show that
$$
V([0, u], Z_2, \dots, Z_n)=\frac{1}{n!}\sum|\det(\{P_{u^\perp}w_{i_j j}\}_{j=2}^n)|,
$$
where the sum is  taken over all integer $i_j \in \{1, \dots, N_j\}$, for $j \in \{2, \dots, n\}$. We  mainly use the following particular case: for $Z=[0,u_1]+\cdots+[0,u_m]$, then 
\begin{align}\label{form-vol-zo}
|Z|=\!\!\!\sum_{1\le i_1<\cdots<i_n \le m}&|\det(u_{i_1},\dots,u_{i_n})|\\
&|P_{e^\bot}Z|=\!\!\!\sum_{1\le i_2<\cdots<i_n\le m}|\det(P_{e^\bot}u_{i_2},\dots,P_{e^\bot}u_{i_n})|.\nonumber
\end{align}

\section{Equivalent forms of the log-submodularity of volume}
\label{sec:sublog}

\subsection{Submodularity}
\label{ss:smod}

Let us recall the notion of submodular set function and remind some basic properties, see \cite{MG19, Top98:book, FH05, FMZ22} for more information on this subject and the proofs of the theorems. We denote ${[n]}=\{1,\cdots,n\}$ and let $2^{[n]}$  be the family of subsets of ${[n]}$.

\begin{defn}\label{def:supermod}
A set function $F:2^{[n]}\ra\RL$ is {\it submodular} if 
\be\label{supmod:defn}
F(\setS\cup\setT)+F(\setS\cap\setT) \leq F(\setS) + F(\setT)\quad \mbox{for all subsets $\setS, \setT$ of $[n]$}.
\ee
\end{defn}

Submodularity is closely related to a partial ordering on hypergraphs  \cite{Eme75, BL91, BB12}.
Let $\calM(n,m)$ be the following family of 
multi-hypergraphs: each consists of non-empty subsets $\setS_i$ of $[n]$,
$\sum_i |\setS_i|= m$, with $\setS_i=\setS_j$ allowed.
Consider a given multi-hypergraph 
$\collS = \{\setS_1, \dots, \setS_l\} \in \calM(n,m)$.
Take any pair of non-nested sets $\{\setS_i,\setS_j\}\subset \collS$
and let $\collS' = \collS(i,j)$ be obtained from $\collS$ by replacing $\setS_i$ and $\setS_j$ 
by $\setS_i \cap \setS_j$ and $\setS_i \cup \setS_j$, 
keeping only $\setS_i \cup \setS_j$ if $\setS_i \cap \setS_j = \emptyset$. 
$\collS'$ is called an  elementary compression of $\collS$. The result of a sequence of
elementary compressions is called a compression.
Define a partial order on $\calM(n,m)$ by setting $\calA > \calA'$ if $\calA'$ is a
compression of $\calA$. 
Using transitivity of the partial order and reducing to elementary compression, we get the following theorem (see \cite{BB12}).
\begin{thm}\label{thm:compr}
Suppose $F$ is a submodular function on $[n]$.
Let $\calA$ and $\calB$ be finite multi-hypergraph of subsets of $[n]$, with $\calA > \calB$.
Then
\ben
\sum_{\setS\in\calA} F(\setS) \geq \sum_{\setT\in\calB} F(\setT) .
\een
\end{thm}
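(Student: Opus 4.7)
The plan is to reduce the statement to a single elementary compression step, then apply the defining inequality of submodularity directly. Because the partial order on $\calM(n,m)$ is generated by elementary compressions, the hypothesis $\calA > \calB$ means there exists a finite sequence
\[
\calA = \calA_0 > \calA_1 > \cdots > \calA_k = \calB
\]
in which each $\calA_{\ell+1}$ is an elementary compression of $\calA_\ell$. By transitivity it therefore suffices to prove the inequality for one elementary compression, i.e.\ the case $k=1$; the general statement then follows by chaining the resulting inequalities together.

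Next I would fix a single elementary compression $\collS' = \collS(i,j)$ of $\collS=\{\setS_1,\dots,\setS_l\}$, in which the non-nested pair $\setS_i,\setS_j$ is replaced by $\setS_i\cap \setS_j$ and $\setS_i\cup\setS_j$ (with the empty set discarded if $\setS_i\cap\setS_j=\emptyset$). All other sets $\setS_m$, $m\neq i,j$, appear identically on both sides of the desired inequality, so they cancel and
\[
\sum_{\setS\in\collS} F(\setS)-\sum_{\setT\in\collS'} F(\setT)
= F(\setS_i)+F(\setS_j)-F(\setS_i\cup\setS_j)-F(\setS_i\cap\setS_j),
\]
with the convention $F(\emptyset)=0$ in the disjoint case (the natural normalization for the volume-type submodular functions we care about, and implicit for the families $\calM(n,m)$ of non-empty subsets). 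The right-hand side is non-negative by \eqref{supmod:defn}, which is exactly the definition of submodularity applied to the pair $\setS_i,\setS_j$. This gives the one-step inequality, and iterating along the compression chain proves the theorem.

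The only mildly delicate point, and the place worth being explicit, is the disjoint-intersection subcase: one must either work with the convention $F(\emptyset)=0$ or verify that one's target functional (e.g.\ $\log|\cdot|$ on the Minkowski semigroup of zonoids) is defined on non-empty arguments in a way consistent with dropping the empty set. Since the intended applications later in the paper will involve submodular functions built from volumes vanishing on the ``empty'' case, this is not a real obstacle, but it is the one bookkeeping check I would flag before concluding the argument.
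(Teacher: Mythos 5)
Your proof is correct and follows essentially the same approach as the paper, which compresses the entire argument into a single sentence ("using transitivity of the partial order and reducing to elementary compression") before citing \cite{BB12}. You are also right to flag the normalization $F(\emptyset)\geq 0$: the theorem as stated is in fact false without it (take $F(\emptyset)=-1$, $F(\{1\})=F(\{2\})=0$, $F(\{1,2\})=1$, which is submodular), and the paper only acknowledges this implicitly in the remark that follows, where it restricts to submodular $F$ with $F(\emptyset)=0$.
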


For every multi-hypergraph $\calA \in \calM(n,m)$ there is a unique minimal multi-hypergraph
$\calA^{\#}$ dominated by $\calA$ consisting of the sets
$\setS^{\#}_j = \{i \in [n] : i \text{ lies in at least } j \text{ of the sets } \setS \in \calA\}$. One implication of Theorem~\ref{thm:compr} is that submodular functions $F$ with $F(\phi)=0$ are ``fractionally subadditive'' (see, e.g., \cite{MT10})-- this property has also been investigated in connection with volumes of Minkowski sums \cite{BMW11, FMMZ16, BM21}.

We also have a notion of submodularity on the positive octant of the Euclidean space.

\begin{defn}\label{def:supermod-Rmaxmin}
A function $f:\RL_{+}^{n} \ra\RL$ is submodular if, for any $x,y \in \mathbb{R}_+^n$,
\ben
f(x\vee y) + f(x \wedge y) \le f(x) + f(y), 
\een
where $x \vee y$ (resp. $x \wedge y$)  denotes the componentwise maximum (resp. minimum) of $x$ and $y$.
\end{defn}

The next very simple lemma connects submodularity for functions defined on $\RL_{+}^n$ to submodularity for set functions. For a set $S\subset \{1,\dots, n\}$, let $\one_S$ be the vector in $\RL^n$ such that for each $i\in \{1,\dots, n\}$, the $i$-th coordinate of $\one_S$ is 1, for $i\in S$ and $0$, for $i\notin S$.

\begin{lem}\label{lem:set-rn}
If $f:\RL_{+}^n\ra\RL$ is submodular, and we set $F(S):=f(\one_S)$ for each $S\subset \{1,\dots, n\}$,
then $F$ is a submodular set function.
\end{lem}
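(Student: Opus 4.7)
The plan is to reduce the set-function submodularity inequality for $F$ directly to the $\RL_{+}^{n}$-submodularity of $f$ applied to the two indicator vectors $\one_S$ and $\one_T$. The only thing to check is the compatibility of the two lattice operations: set union/intersection on one side, and coordinate-wise max/min on the other.

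First I would observe that for any subsets $S,T\subset [n]$ and any coordinate $i\in[n]$,
\begin{equation*}
(\one_S\vee\one_T)_i=\max\bigl((\one_S)_i,(\one_T)_i\bigr)=\one_{\{i\in S\cup T\}}=(\one_{S\cup T})_i,
\end{equation*}
and similarly $(\one_S\wedge\one_T)_i=\min\bigl((\one_S)_i,(\one_T)_i\bigr)=(\one_{S\cap T})_i$, since each coordinate of these indicator vectors is either $0$ or $1$. Hence $\one_{S\cup T}=\one_S\vee\one_T$ and $\one_{S\cap T}=\one_S\wedge\one_T$ as vectors in $\RL_{+}^{n}$.

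Then I would invoke Definition~\ref{def:supermod-Rmaxmin} with $x=\one_S$ and $y=\one_T$ to obtain
\begin{equation*}
f(\one_S\vee\one_T)+f(\one_S\wedge\one_T)\le f(\one_S)+f(\one_T),
\end{equation*}
which, by the preceding identification and the definition $F(\cdot):=f(\one_{\cdot})$, is precisely $F(S\cup T)+F(S\cap T)\le F(S)+F(T)$, so $F$ is submodular in the sense of Definition~\ref{def:supermod}.

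There is no real obstacle here; the whole content is the elementary identification of the lattice operations on $\{0,1\}^n\subset\RL_{+}^{n}$ with those on $2^{[n]}$ under the bijection $S\mapsto\one_S$. The lemma should be presented as a one-line verification.
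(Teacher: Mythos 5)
Your proof is correct, and it is exactly the intended argument: the paper omits a proof of this lemma (calling it ``very simple''), and the one-line verification you give -- identifying $\one_{S\cup T}=\one_S\vee\one_T$ and $\one_{S\cap T}=\one_S\wedge\one_T$, then applying Definition~\ref{def:supermod-Rmaxmin} with $x=\one_S$, $y=\one_T$ -- is the standard and only natural way to prove it.
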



The fact that submodular functions are closely related to functions
with decreasing differences is classical (see, e.g., \cite{MG19} or  \cite{Top98:book}, which describes
more general results involving arbitrary lattices).  We  denote by  $\partial^2_{i,j}$ the second derivative with respect to coordinates $i, j$.

 \begin{prop}\label{prop:diff}
 Let $f:\RL_+^n\ra\RL$ be a $C^2$ function. Then $f$ is submodular if and only if 
$ \partial_{i,j}^2 f\leq 0$ on $\RL_+^n$, for every $i\neq j$.
 \end{prop}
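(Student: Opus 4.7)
The plan is to prove both implications directly, using standard multivariable calculus. Neither direction presents serious difficulty once the right reformulation is chosen.

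For the necessity direction (submodular $\Rightarrow \partial_{ij}^2 f \leq 0$), I would fix $x \in \RL_+^n$, distinct indices $i \neq j$, and $s,t>0$. Setting $\tilde x = x + s e_i$ and $\tilde y = x + t e_j$, one checks $\tilde x \vee \tilde y = x + s e_i + t e_j$ and $\tilde x \wedge \tilde y = x$, so Definition~3.2 reads
$$f(x + s e_i + t e_j) - f(x + s e_i) - f(x + t e_j) + f(x) \leq 0.$$
Dividing by $t>0$ and letting $t\to 0^+$ gives $\partial_j f(x + s e_i) - \partial_j f(x) \leq 0$; dividing this by $s>0$ and letting $s\to 0^+$ yields $\partial_{ij}^2 f(x) \leq 0$, using the $C^2$ hypothesis to guarantee the limits exist.

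For the sufficiency direction, given $x,y \in \RL_+^n$, I would set $z = x\wedge y$, $u = x - z$, and $v = y - z$. The crucial observation is that $u$ and $v$ lie in $\RL_+^n$ and have \emph{disjoint supports}, since $u_i = (x_i - y_i)_+$ and $v_i = (y_i - x_i)_+$ cannot both be nonzero. Moreover, $z+u = x$, $z+v = y$, and $z+u+v = x \vee y$, so submodularity reduces to showing
$$f(z+u+v) - f(z+v) \leq f(z+u) - f(z).$$
To this end I would introduce $\psi:[0,1]\to\RL$ by $\psi(t) = f(z+u+tv) - f(z+tv)$, whose endpoint values are precisely the two sides above. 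A direct differentiation (interchanging an integration along $u$ with the differentiation along $v$, justified by $C^2$ regularity) gives
$$\psi'(t) = \int_0^1 \sum_{i,j} u_i v_j\, \partial_{ij}^2 f(z + su + tv)\, ds.$$

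The key step is then to invoke the disjoint-support property: a term $u_i v_j$ is nonzero only when $i \in \supp(u)$ and $j \in \supp(v)$, which forces $i \neq j$, so the hypothesis $\partial_{ij}^2 f \leq 0$ applies and makes the integrand pointwise non-positive. Hence $\psi'(t) \leq 0$, and $\psi(1)\leq \psi(0)$ is exactly the desired submodularity inequality. The main (minor) obstacle is simply spotting this disjoint-support structure, which is what guarantees that only off-diagonal mixed partials appear in $\psi'$; the rest is routine.
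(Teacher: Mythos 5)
Your proof is correct, and it fills in an argument the paper only cites (to Topkis's book and to \cite{MG19}) without reproducing. Both directions are sound: the necessity direction correctly reduces submodularity at the points $x+se_i$ and $x+te_j$ to the non-positivity of a second-order difference quotient, and the $C^2$ hypothesis justifies passing to the mixed partial. For sufficiency, the decomposition $z=x\wedge y$, $u=x-z$, $v=y-z$ with $z+u+v=x\vee y$ and the observation that $u$ and $v$ have disjoint supports is exactly the key structural point; it guarantees that every term $u_iv_j\,\partial^2_{ij}f$ appearing in $\psi'(t)$ has $i\neq j$, so the hypothesis applies and forces $\psi'\le 0$. One minor point worth stating explicitly is that all the points $z+su+tv$ for $s,t\in[0,1]$ lie in $\RL_+^n$ (since $z,u,v$ are componentwise non-negative), so the evaluations of $\partial^2_{ij}f$ are legitimate; you use this implicitly. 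This is the standard lattice-theoretic proof, so while the paper does not spell it out, your argument is the one the cited references give.
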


\subsection{Classes closed under sums and dilation}
\label{ss:mlc-equiv}

Let us now show the various forms that the notions of submodularity and log-submodularity can have applied to volume of convex compact sets.


\begin{thm}\label{thm:logsubmod-equiv}
Consider a collection $\cal{K}$ of compact convex sets in $\RL^n$ stable by sums and dilation. Then the following statements are equivalent:
\begin{enumerate}
\item For every $m\ge 1$ and every $A, B_1,\dots, B_m$ in ${\cal K}$
\be\label{set}
|A|^{m-1}\left|A+\sum_{i=1}^mB_i\right|\leq \prod_{i=1}^{m} |A+B_i|.
\ee
\item For every $m\ge 1$ and  $A, B_1,\dots, B_m\in\cal{K}$, the function $\bar{w}:2^{[m]}\ra [0,\infty)$ defined by
\be\label{level2}
\bar{w}(\setS)=\log \bigg|A+\sum_{i\in\setS} B_i \bigg|
\ee
for each $\setS\in2^{[m]}$, is a submodular.
\item For every $m\ge 1$ and  any multi-hypergraphs $\calA, \calB$ on $[m]$ with $\calA>\calB$,
\ben
\sum_{\setS\in\calA} \bar{w}(\setS) \geq \sum_{\setT\in\calB} \bar{w}(\setT),
\een
where $\bar{w}$ is defined by (\ref{level2}).
\item For every $m\ge 1$ and  every $A, B_1,\dots, B_m\in\cal{K}$, the function $w:\Rpl^{m}\ra [0,\infty)$ defined, for  $x=(x_1,\dots,x_m)\in\Rpl^m$, by
\be
w(x)=\log \bigg|A+\sum_{i=1}^m x_i B_i \bigg|, 
\ee
 is submodular.

\item For every $A, B_1,B_2\in\cal{K}$
\ben
|A| V(A[n-2], B_1, B_2) \leq \frac{n}{n-1} V(A[n-1], B_1) V(A[n-1], B_2) .
\een

\end{enumerate}
\end{thm}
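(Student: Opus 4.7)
\medskip
\noindent\textbf{Proof plan.}
The backbone of the argument is to establish the chain $(1) \Leftrightarrow (2) \Leftrightarrow (3) \Leftrightarrow (4) \Leftrightarrow (5)$, exploiting crucially that $\cal{K}$ is closed under Minkowski addition and dilation so that we may freely absorb intermediate Minkowski sums into a new ``base body'' still in $\cal{K}$.

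\medskip
\noindent\emph{$(1) \Leftrightarrow (2)$.} The $m=2$ case of (1) is literally the submodularity inequality $\bar w(\{1\})+\bar w(\{2\})\ge \bar w(\{1,2\})+\bar w(\emptyset)$, so (2) trivially implies the $m=2$ case of (1). For the reverse direction, for arbitrary $S,T\subset[m]$ with $I=S\cap T$, $J=S\setminus T$, $K=T\setminus S$, set $A'=A+\sum_{i\in I}B_i$, $B'_1=\sum_{i\in J}B_i$, $B'_2=\sum_{i\in K}B_i$, which by closure lie in $\cal{K}$. Applying the $m=2$ case of (1) to $(A',B'_1,B'_2)$ yields submodularity of $\bar w$. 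Finally, the general $m\ge 2$ case of (1) follows from the $m=2$ case by induction: apply the $m=2$ statement first to $(A,B_1,B_2+\cdots+B_m)$, then iterate on $|A+B_2+\cdots+B_m|$.

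\medskip
\noindent\emph{$(2) \Leftrightarrow (3)$.} This is Theorem~\ref{thm:compr} applied to the submodular set function $\bar w$ (the forward direction); the converse is immediate on choosing the elementary compression $\calA=\{S,T\}$, $\calB=\{S\cup T,S\cap T\}$.

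\medskip
\noindent\emph{$(2) \Leftrightarrow (4)$.} Restricting $w$ to $\{0,1\}^m$ recovers $\bar w$ via Lemma~\ref{lem:set-rn}, so $(4)\Rightarrow(2)$. Conversely, given $x,y\in\RL_+^m$, set $s_i=\min(x_i,y_i)$, $t_i=x_i-s_i$, $u_i=y_i-s_i$, and $A'=A+\sum_i s_i B_i\in\cal{K}$, $B'_1=\sum_i t_iB_i\in\cal{K}$, $B'_2=\sum_iu_iB_i\in\cal{K}$ (using closure under dilation and sums). Then the inequality $w(x\vee y)+w(x\wedge y)\le w(x)+w(y)$ becomes exactly $|A'+B'_1+B'_2|\,|A'|\le|A'+B'_1|\,|A'+B'_2|$, which is the $m=2$ case of (1), already equivalent to (2).

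\medskip
\noindent\emph{$(4) \Leftrightarrow (5)$.} Since $w$ is $C^2$ on $\RL_+^m$, by Proposition~\ref{prop:diff} submodularity of $w$ is equivalent to $\partial_{i,j}^2 w\le 0$ on $\RL_+^m$ for all $i\ne j$. Fix $x\in\RL_+^m$ and write $A'=A+\sum_k x_kB_k\in\cal{K}$. Using the Steiner-type formula \eqref{eq:ste} applied twice,
\begin{equation*}
\partial_i |A+\textstyle\sum_k x_kB_k|=n\,V(A'[n-1],B_i),\qquad \partial_{i,j}^2 |A+\textstyle\sum_k x_kB_k|=n(n-1)V(A'[n-2],B_i,B_j),
\end{equation*}
so that
\begin{equation*}
\partial_{i,j}^2 w=\frac{n(n-1)V(A'[n-2],B_i,B_j)}{|A'|}-\frac{n^2\,V(A'[n-1],B_i)\,V(A'[n-1],B_j)}{|A'|^2}.
\end{equation*}
The condition $\partial_{i,j}^2 w\le 0$ thus reads
\begin{equation*}
|A'|\,V(A'[n-2],B_i,B_j)\,\le\,\tfrac{n}{n-1}\,V(A'[n-1],B_i)\,V(A'[n-1],B_j),
\end{equation*}
which is precisely statement (5) applied to $(A',B_i,B_j)\in\cal{K}^3$. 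Evaluating at $x=0$ recovers (5), while invoking (5) for the triples $(A',B_i,B_j)$ as $x$ varies gives (4).

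\medskip
\noindent\emph{Main obstacle.} The only computational step is $(4)\Leftrightarrow(5)$: differentiating the Steiner polynomial to express $\partial_{i,j}^2\log|\cdot|$ in terms of mixed volumes. Everything else is a direct reduction to the $m=2$ submodular case, made possible by the closure of $\cal{K}$ under Minkowski sums and dilations.
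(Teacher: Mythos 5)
Your proposal is correct and follows essentially the same route as the paper: reduce everything to the $m=2$ instance of (1) by absorbing partial Minkowski sums into the base body using closure of $\mathcal{K}$ under sums and dilations, invoke Theorem~\ref{thm:compr}, Lemma~\ref{lem:set-rn} and Proposition~\ref{prop:diff} where the paper does, and compute $\partial_{i,j}^2\log|A+\sum x_k B_k|$ via the Steiner-type expansion to get the local Alexandrov--Fenchel form (5). Your write-up of the $(2)\Leftrightarrow(4)$ step (via the $x\wedge y$, $x-x\wedge y$, $y-x\wedge y$ decomposition) is more explicit than the paper's one-line reference but encodes the same idea.
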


\begin{proof}
2. $\implies$ 1. This follows directly from the definition. \\
1. $\implies$ 2.: We use (\ref{set}) with $A+\sum_{i\in S\cap T}B_i$, $\sum_{i\in S\setminus T}B_i$ and $\sum_{i\in T\setminus S}B_i$.\\
2. $\Longleftrightarrow$ 3.: This follows from Theorem~\ref{thm:compr} applied to $-\bar{w}$.\\
2. $\Longleftrightarrow$ 4.: This comes from Lemma~\ref{lem:set-rn} and the fact that $\cal{C}$ is dilation invariant.\\
4. $\Longleftrightarrow$ 5.: We apply
Proposition~\ref{prop:diff} to $f=-w$. Let $K_x=A+\sum_i x_i B_i$ and $v(x)=|K_x|$. Then 
\ben
\partial_j v(x) = \lim_{\eps\ra 0} \frac{|K_x+\eps B_j|- |K_x|}{\eps}= nV(K_x[n-1], B_j) ,
\een
and for $k\neq j$
\ben\begin{split}
\partial_{j,k}^2 v(x)
&= n\partial_k V(K_x[n-1], B_j) \\
&= n \lim_{\eps\ra 0} \frac{V( (K_x+\eps B_k)[n-1], B_j)- V( K_x[n-1], B_j)}{\eps} \\
&= n(n-1) V(K_x[n-2], B_k, B_j) .
\end{split}\een
By Proposition~\ref{prop:diff}, the submodularity of $w$ is equivalent to $\partial_{j,k}^2w(x)\le0$. But $w=\log(v)$ so
\ben
\partial_{j,k}^2w(x)=\frac{v\dou_{jk}v -\dou_{j}v\dou_{k}v }{v^2},
\een
which is negative if and only if  
\ben
|K_x| V(K_x[n-2], B_j, B_k) \leq \frac{n}{n-1} V(K_x[n-1], B_j) V(K_x[n-1], B_k).
\een
Thus plugging $x=0$ we get $4. \implies 5.$ and using 4. with $K$ instead of $A$ gives $5. \implies 4.$
\end{proof}


We note that it is enough to check property 1. in Theorem \ref{thm:logsubmod-equiv}, just in the case $m=2$. Indeed the case of general $m>2$ follows by an iteration argument.

We also note that, in dimension $2$, property 5. of Theorem \ref{thm:logsubmod-equiv} holds for any convex bodies  by the classical local version of Alexandrov's inequality that was proved by W. Fenchel (see \cite{Fen36}, also \cite{Sch93:book} and discussion in Section \ref{ss:dim2} below)  and further generalized in \cite{FGM03, AFO14, SZ16}: 
for any convex compact sets $A,B_1,B_2$ in $\RL^n$ we have 
\begin{equation}\label{eq:constanttwo}
|A| V(A[n-2], B_1, B_2) \leq 2 V(A[n-1], B_1) V(A[n-1], B_2).
\end{equation}
The constant $2$ is sharp in any dimension (see \cite{GHP02} and \cite{FMZ22}). This shows also that log-submodularity  doesn't hold in the set of compact convex sets in $\RL^n$, for $n\ge3$.
From \eqref{eq:constanttwo} and Theorem \ref{thm:logsubmod-equiv}, the following theorem holds. 

\begin{thm}\label{th:dim2-sums} 
For every $A, B_1, B_2$ convex compact in $\RL^2$ it holds
\[
|A|\,|A+B_1+B_2|\leq |A+B_1|\,|A+B_2|.
\]
\end{thm}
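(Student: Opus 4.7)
The plan is to apply Theorem \ref{thm:logsubmod-equiv} with $\cal{K}$ taken to be the class of all convex compact sets in $\RL^2$. This class is obviously stable under Minkowski summation and positive dilation, so the theorem is applicable. The desired inequality is exactly property 1 of that theorem specialized to $m=2$, so by the equivalence $5.\Longleftrightarrow 1.$ it suffices to verify property 5, which in dimension $n=2$ reads
\[
|A|\,V(B_1,B_2)\;\leq\;2\,V(A,B_1)\,V(A,B_2)
\]
for all convex compact $A,B_1,B_2\subset\RL^2$.

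First, I would observe that this is precisely \eqref{eq:constanttwo} taken in dimension $n=2$. That inequality is stated in the excerpt as a classical local version of Alexandrov's inequality due to Fenchel, valid for arbitrary convex compact sets, with sharp constant $2$. So property 5 is satisfied for $\cal{K}$ for free, with the right constant $\frac{n}{n-1}=2$, and no additional computation is needed.

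Given this, the conclusion follows immediately by invoking the implication $5.\Rightarrow 1.$ of Theorem \ref{thm:logsubmod-equiv} and taking $m=2$. The only substantive step—and the reason the statement has to be dimension-specific—is the match between the Fenchel constant $2$ in \eqref{eq:constanttwo} and the constant $\frac{n}{n-1}$ appearing in property 5: they agree exactly when $n=2$ and not otherwise. This is what makes the argument work in the plane while failing for general convex bodies in $\RL^n$ with $n\geq 3$, consistent with the counterexamples alluded to after \eqref{eq:constanttwo}. There is thus no real obstacle beyond citing the two results already in hand.
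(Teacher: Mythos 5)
Your proposal is correct and is essentially identical to the paper's own proof: the paper also deduces Theorem \ref{th:dim2-sums} by noting that Fenchel's inequality \eqref{eq:constanttwo} in dimension $n=2$ is exactly property 5 of Theorem \ref{thm:logsubmod-equiv} with constant $\frac{n}{n-1}=2$, and then invoking the equivalence $5.\Rightarrow 1.$ with $m=2$. Nothing to add.
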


It is easy to see that 1. from Theorem \ref{thm:logsubmod-equiv} works well with direct sums, more precisely if $A^1,B^1_1, \dots, B_m^1 \subset \RL^{n_1}$  and  $A^2,B^2_1, \dots, B_m^2 \subset \RL^{n_2}$ satisfy  (\ref{set}), then $A=A^1\times A^2\subset\RL^{n_1+n_2}$ and $B_i=B^1_i\times B_i^2$ in $\RL^{n_1+n_2}$, $i=1, \dots,m$ satisfies (\ref{set}). This fact 
can be used to create different classes of compact convex sets stable by sum and dilations which satisfy the properties of Theorem \ref{thm:logsubmod-equiv}.

\begin{rem}
Minkowski's second inequality gives that, for any compact convex sets $A$ and $B$, one has
\be\label{eq:af-cons}
|A| V(A[n-2], B[2])\leq V(A[n-1], B)^2. 
\ee
Theorem \ref{thm:logsubmod-equiv} implies that if 
$\calK$ is a class of convex bodies on which log-submodularity holds (property 1. of Theorem \ref{thm:logsubmod-equiv}), then for bodies in $\calK$, a Fenchel type inequality similar to \eqref{eq:constanttwo} holds with a dimensional factor $\frac{n}{n-1}$ instead of $2$.

Let us, also, note that the same proof shows that for a fixed compact convex set $A$ not necessarily belonging to $\cal{K}$, if, for every $B_1, B_2$ in ${\cal K}$, one has
\be
|A|\,|A+B_1+B_2|\leq |A+B_1|\,|A+B_2|,
\ee
then, for every $B_1,B_2\in\cal{K}$, one has
\ben
|A| V(A[n-2], B_1, B_2) \leq \frac{n}{n-1} V(A[n-1], B_1) V(A[n-1], B_2) .
\een
\end{rem}

\subsection{Classes closed under linear transformations}
\label{ss:lin-equiv}

\begin{thm}\label{th:linear}
Let $\L$ be a class of a compact convex sets in $\RL^n$ stable under any linear transformations.  The following are equivalent.
\begin{enumerate}
\item $|A|\,|\partial(A+[0,u])|\le |\partial A|\,|A+[0,u]|$  for any $A\in\L$ and any $u\in\RL^n$.
\item $|A|\,|\partial( P_{u^\bot}A)|\le|\partial A|\, |P_{u^\bot}A|_{n-1}$, for any $A\in\L$  and any $u\in S^{n-1}$.
\item $|A|\,|P_{[u,v]^\bot}A|_{n-2}\sqrt{1-\langle u, v\rangle^2}\le   |P_{u^\bot}A|_{n-1}|P_{v^\bot}A|_{n-1}$, for any $A\in\L$  and any $u,v\in S^{n-1}$.
\item $|A+[0,u]+[0,v]|\,|A|\le |A+[0,u]|\,|A+[0,v]|$ for any $A\in\L$ and any $u,v\in\RL^n$.
\item $|A|V(A[n-2],Z_1,Z_2)\le  \frac{n}{n-1} V(A[n-1],Z_1)V(A[n-1],Z_2)$, for any $A\in\L$ and any $Z_1, Z_2$ zonoids.
\item For any $A\in\L$ and any $u,v\in\RL^n$, let us define $P(t)=|A+t([0,u]+[0,v])|$, for $t>0$. Then $P(t)$ is the restriction to $\RL_+$ of a polynomial on $\RL$, which has only real roots.
\end{enumerate}

\end{thm}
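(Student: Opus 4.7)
My plan is to prove all six statements equivalent by reducing each to the common inequality
\[
(\star)\quad |A|\,V(A[n-2], [0, u], [0, v]) \le \tfrac{n}{n-1}\,V(A[n-1], [0, u])\,V(A[n-1], [0, v]),
\]
valid for every $A\in\L$ and every $u,v\in\RL^n$; this is precisely statement 5 restricted to pairs of segments. The recurring algebraic tool will be that a mixed volume $V(K_1,\dots,K_n)$ vanishes as soon as several of its factors are (parallel) segments spanning a subspace of dimension smaller than their number, so Steiner expansions involving only the two segments $[0,u]$ and $[0,v]$ truncate after their bilinear term.

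For $1\Leftrightarrow 2$, I would use \eqref{eq:ste} to derive the clean identities $|A+[0,u]|=|A|+|P_{u^\bot}A|_{n-1}$ and $|\partial(A+[0,u])|=|\partial A|+|\partial P_{u^\bot}A|_{n-2}$ for unit $u$; the second requires the mixed-volume identification $V(A[n-2],[0,u],B_2^n)=\tfrac{1}{n(n-1)}|\partial P_{u^\bot}A|_{n-2}$, which I would obtain by applying \eqref{eq:proj} to $A+tB_2^n$ and matching the $t$-coefficient. Substitution into statement 1 followed by cancellation of $|A||\partial A|$ gives statement 2. For $3\Leftrightarrow(\star)$, applying \eqref{eq:proj} to the two-dimensional body $[0,u]+[0,v]$ lying in $\mathrm{span}(u,v)$ yields $\sqrt{1-\langle u,v\rangle^2}\,|P_{[u,v]^\bot}A|_{n-2}=n(n-1)\,V(A[n-2],[0,u],[0,v])$, converting statement 3 directly into $(\star)$. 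For $4\Leftrightarrow(\star)$, with $V_1=V(A[n-1],[0,u])$, $V_2=V(A[n-1],[0,v])$, and $V_{12}=V(A[n-2],[0,u],[0,v])$, the Steiner expansion truncates to $|A+[0,u]+[0,v]|=|A|+nV_1+nV_2+n(n-1)V_{12}$, and the log-submodularity inequality reduces by direct algebra to $(\star)$. For $6\Leftrightarrow(\star)$, the same truncation shows that $P(t)=|A|+nt(V_1+V_2)+n(n-1)t^2V_{12}$ is a genuine quadratic; its discriminant is non-negative iff $n(V_1+V_2)^2\ge 4(n-1)|A|V_{12}$, and rescaling $u\mapsto\alpha u$, $v\mapsto\beta v$ and optimizing via AM-GM reduces the discriminant condition exactly to $(\star)$.

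The equivalence $4\Leftrightarrow 5$ then follows from multilinearity of mixed volumes (which propagates $(\star)$ from segments to zonotopes by summing over components) together with Hausdorff continuity (which extends it to zonoids); conversely $5\Rightarrow 1$ is immediate by setting $Z_1=[0,u]$, $Z_2=B_2^n$ and using the identifications from the $1\Leftrightarrow 2$ step. The main obstacle will be the reverse implication $1\Rightarrow(\star)$, the only step where stability of $\L$ under linear transformations is crucially used. Here I would apply statement 1 to $T^{-1}A\in\L$ for invertible linear $T$, and use $V(TK_1,\dots,TK_n)=|\det T|\,V(K_1,\dots,K_n)$ to rewrite it as
\[
|A|\,V(A[n-2],[0,w],E)\le\tfrac{n}{n-1}\,V(A[n-1],[0,w])\,V(A[n-1],E)
\]
for every $A\in\L$, every $w\in\RL^n$, and every ellipsoid $E=T^{-1}B_2^n$. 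Choosing $T=T_\varepsilon$ so that $E_\varepsilon$ collapses in Hausdorff distance onto the segment $[-v,v]$ and passing to the limit by continuity of mixed volumes, together with the translation-invariance identity $V(\,\cdot\,,[-v,v])=2\,V(\,\cdot\,,[0,v])$, produces $(\star)$ and closes the cycle of equivalences.
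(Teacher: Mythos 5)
Your proposal is correct, and the underlying mechanics are essentially those of the paper's own proof: the same Steiner truncation for pairs of segments in passing between statements 3, 4, and 6; the same AM--GM/rescaling device for the discriminant condition in 6; the same multilinearity-plus-approximation argument to pass from segments to zonoids in 5; and the same degeneration of a ball to a segment under a family of linear maps to close the loop from statement 1 (or 2) back to the segment inequality. The only difference is organizational -- you hub everything at statement 5 restricted to segments rather than at statement 3, which are algebraically equivalent via \eqref{eq:proj} -- so this is a tidy rewriting of the paper's argument rather than a genuinely different route.
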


\begin{proof}
$1.\iff 2.$: This was observed in \cite{AFO14}. It is true even for fixed $A$ and $u$ and follows from the identities $|A+[0,u]|=|A|+|P_{u^\bot}A|$ and 
$$|\partial(A+[0,u])|=nV((A+[0,u])[n-1], B_2^n)=
|\partial A|+|\partial (P_{u^\bot}A)|.
$$
$2. \implies 3.$: Define $T_\eps:\RL^n\to\RL^n$ by $T_\eps x=\eps x+\langle x,v\rangle v$. Notice that $T_0(B_2^n)=[-v,v]$. From $(2)$ applied to $T_\eps^{-1}A$ and $T_\eps^{-1}u$ we have 
$$
|T_\eps^{-1}A|V(T_\eps^{-1}A[n-2],[0,T_\eps^{-1}u], B_2^n)\le\frac{n}{n-1}V(T_\eps^{-1}A[n-1], B_2^n)V(T_\eps^{-1}A[n-1], [0,T_\eps^{-1}u]).
$$
Thus
\[
|A|V(A[n-2],[0,u], T_\eps B_2^n)\le\frac{n}{n-1}V(A[n-1], T_\eps B_2^n)V(A[n-1], [0,u]).
\]
When $\eps\to 0$, we get 
\begin{eqnarray}\label{eq:mixedvol}
|A|V(A[n-2],[0,u], [0,v])\le\frac{n}{n-1}V(A[n-1],[0,v])V(A[n-1], [0,u]).
\end{eqnarray}
From (\ref{eq:proj}) we get   $|P_{u^\bot}A|_{n-1}=nV(A[n-1], [0,u])$ and
\[
|P_{[u,v]^\bot}A|_{n-2}\sqrt{1-\langle u, v\rangle^2}=n(n-1)V(A[n-2],[0,u], [0,v]).
\]
$3. \iff 4.$: We may assume that $u$ is not colinear with $v$. Applying a linear transformation to $A, u$ and $v$, we may assume that $u, v$ are orthonormal. Expanding both sides of the inequality in 4. and using \eqref{eq:mvf}, we get 3.\\
$3. \implies 5.$: As noticed above, $3.$ is equivalent to (\ref{eq:mixedvol}). From the linearity of mixed volumes, we deduce that for every zonotopes $Z_1$ and $Z_2$, one has 
\begin{eqnarray}\label{eq:mixedvolzo}
|A|V(A[n-2],Z_1,Z_2)\le  \frac{n}{n-1} V(A[n-1],Z_1)V(A[n-1],Z_2).
\end{eqnarray}
Taking limits, we conclude that \eqref{eq:mixedvolzo} is valid for every zonoids $Z_1, Z_2$.\\
$5. \implies2.$: Applying (\ref{eq:mixedvolzo}) to $Z_1=[0,u]$ and $Z_2=B_2^n$ and using that $V(A[n-2],[0,u],B_2^n)=\frac{1}{n(n-1)}|\partial (P_{u^\bot}A)|$,  $V(A[n-1],[0,u])=\frac{1}{n}|P_{u^\bot}A|_{n-1}$ and $V(A[n-1],B_2^n)=\frac{1}{n}|\partial A|$, we conclude.\\
$3.\iff 6.$: We may assume that $u$ is not colinear with $v$. Applying a linear transformation, to (3) and (6), it is enough to assume that  $u, v$ are orthonormal. Then
$$
P(t)=|A|+ t(|P_{u^\bot}A|_{n-1} +|P_{v^\bot}A|_{n-1})+
|P_{[u,v]^\bot}A|_{n-2}t^2.
$$
The equation 
$$
|A|+ t(|P_{u^\bot}A|_{n-1} +|P_{v^\bot}A|_{n-1})+
|P_{[u,v]^\bot}A|_{n-2}t^2 =0
$$
has real roots is equivalent to 
\be\label{arith}
\left(\frac{|P_{u^\bot}A|_{n-1} +|P_{v^\bot}A|_{n-1}}{2}\right)^2 \ge |A| |P_{[u,v]^\bot}A|_{n-2},
\ee
which follows immediately from 3.. To show that $6. \implies 3.$ assume (\ref{arith}) is true for all $A$ in $\L$ and  $u,v \in S^{n-1}$. Consider the linear  operator $T$ such that $Tu =t u$ and $Tv=t^{-1} v$ for $t>0$ and $Tx=x$ for $x \in [u,v]^\perp$.
Taking $t=(|P_{u^\bot}A|_{n-1}/|P_{v^\bot}A|_{n-1}|)^{1/2}$ we get  
$$
|P_{v^\bot} T A|_{n-1} = |P_{u^\bot} T A|_{n-1}.
$$
Applying (\ref{arith}) to $TA$,   we get
$$
|P_{u^\bot}TA|_{n-1} |P_{v^\bot}TA|_{n-1} \ge |TA| |P_{[u,v]^\bot}TA|_{n-2}.
$$
Since $|TA|=|A|$, $|P_{[u,v]^\bot}TA|_{n-2}=|P_{[u,v]^\bot}A|_{n-2}$  
and 
\[|P_{u^\bot}TA|_{n-1} |P_{v^\bot}TA|_{n-1} =|P_{u^\bot}A|_{n-1} |P_{v^\bot}A|_{n-1}
\]
we get 3.
\end{proof}

\begin{rem}
It was proved in \cite{SZ16} that 5. in Theorem \ref{th:linear} is satisfied when $A$ is a simplex (actually, even without constant $n/(n-1)$), thus all of the properties  in Theorem \ref{th:linear} are true for simplices. Actually, the inequalities of statements 2. and 3. hold with an extra factor $\frac{n-1}{n}$ on the right hand side. 
\end{rem}
\begin{rem}
Notice that the inequality \eqref{eq:constanttwo} shows that the property $5.$ of Theorem \ref{th:linear} holds true for the class $\mathcal{L}$ of compact convex sets in $\RL^2$ and doesn't hold in the class of compact convex sets in $\RL^n$, for $n\ge3$. 
\end{rem}

 \section{Some Special Cases}
 \label{sec:special}
 
\subsection{An improved inequality in $\RL^2$}
\label{ss:dim2}

Inspired by an analogous result \cite[Theorem 3]{Cou18} in Information Theory, T.~Courtade asked if 
\begin{equation}\label{cortn}
|B|^{1/n}|C|^{1/n}+|A|^{1/n}|A+B+C|^{1/n}\leq |A+B|^{1/n}|A+C|^{1/n}
\end{equation}
for $A=B_2^n$ being the Euclidean ball, and any convex bodies $B,C$  in $\RL^n$. 
Here we confirm Courtade's conjecture in $\RL^2$ in a more general setting.

\begin{thm}
Consider convex bodies $A,B, C\subset \RL^2$, then
\begin{equation}\label{cort2}
|A|^{1/2}|A+B+C|^{1/2}+|B|^{1/2}|C|^{1/2}\leq |A+B|^{1/2}|A+C|^{1/2}.
\end{equation}
\end{thm}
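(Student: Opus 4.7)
I plan to reduce the inequality to an explicit polynomial inequality in the mixed volumes and then show, via a hyperbolic parameterization, that the reduced inequality is in fact a perfect square identity. Let $a=|A|$, $b=|B|$, $c=|C|$, $p=V(A,B)$, $q=V(A,C)$, $r=V(B,C)$, so by \eqref{eq:ste}, $|A+B|=a+2p+b$, $|A+C|=a+2q+c$ and $|A+B+C|=a+b+c+2p+2q+2r$. A direct expansion yields the identity
\[
|A+B|\,|A+C|-|A|\,|A+B+C|-|B|\,|C|\;=\;2M,\qquad M:=2pq+pc+bq-ar,
\]
where $M\ge 0$ because Fenchel's inequality \eqref{eq:constanttwo} in dimension two gives $ar\le 2pq$ and $pc,bq\ge 0$. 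Squaring the desired inequality twice (both squarings are valid since all sides are nonnegative) reduces it to the equivalent polynomial inequality
\[
M^2\;\ge\;abc\,(a+b+c+2p+2q+2r).\qquad(\star)
\]

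To handle the dependence on $r$ in $(\star)$, I would derive a sharper Fenchel-type constraint. Applying Minkowski's inequality \eqref{M1} to $A$ and the body $tB+sC$ ($t,s\in\RL$) gives $V(A,tB+sC)^2\ge |A|\,|tB+sC|$, which after expansion is the nonnegativity of the quadratic form
\[
t^2(p^2-ab)+2ts(pq-ar)+s^2(q^2-ac)\;\ge\;0.
\]
Its discriminant condition reads $(ar-pq)^2\le (p^2-ab)(q^2-ac)$, so $r\le r_{\max}:=(pq+S)/a$ with $S:=\sqrt{(p^2-ab)(q^2-ac)}$. On the range $[0,r_{\max}]$ the function $M$ stays nonnegative (indeed $M|_{r=r_{\max}}=pq-S+pc+bq\ge 0$, since $p^2/(ab)+q^2/(ac)\ge 2\ge 1$ implies $pq\ge S$), so the left-hand side $M^2$ of $(\star)$ is decreasing in $r$ while the right-hand side is linear increasing; hence it is enough to verify $(\star)$ at $r=r_{\max}$.

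I would then introduce the hyperbolic parameterization $p=\alpha\beta\cosh\chi$, $q=\alpha\gamma\cosh\psi$, where $\alpha=\sqrt{a}$, $\beta=\sqrt{b}$, $\gamma=\sqrt{c}$ and $\chi,\psi\ge 0$, which is possible by Minkowski since $p\ge\sqrt{ab}$ and $q\ge\sqrt{ac}$. Then $S=\alpha^2\beta\gamma\sinh\chi\sinh\psi$ and $ar_{\max}=pq+S=\alpha^2\beta\gamma\cosh(\chi+\psi)$, and substituting reduces $(\star)$ at $r=r_{\max}$, after division by $\alpha^2\beta^2\gamma^2$, to
\[
\bigl[\alpha\cosh(\chi-\psi)+\gamma\cosh\chi+\beta\cosh\psi\bigr]^2\;\ge\;\alpha^2+\beta^2+\gamma^2+2\alpha\beta\cosh\chi+2\alpha\gamma\cosh\psi+2\beta\gamma\cosh(\chi+\psi).
\]
Using the elementary identities
\[
\cosh(\chi-\psi)\cosh\chi-\cosh\psi=\sinh\chi\sinh(\chi-\psi),
\]
\[
\cosh(\chi-\psi)\cosh\psi-\cosh\chi=-\sinh\psi\sinh(\chi-\psi),
\]
\[
\cosh\chi\cosh\psi-\cosh(\chi+\psi)=-\sinh\chi\sinh\psi,
\]
I would expand and verify that the difference of the two sides equals the single explicit perfect square
\[
\bigl[\alpha\sinh(\chi-\psi)+\gamma\sinh\chi-\beta\sinh\psi\bigr]^2\;\ge\;0,
\]
which finishes the proof.

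The main obstacle is finding the right reformulation: in the ``raw'' variables $a,b,c,p,q,r$ the polynomial inequality $(\star)$ does not visibly reduce to a sum of squares. One has to combine (i) the monotonicity of $M^2-\text{RHS}$ in $r$, (ii) the strengthened Fenchel constraint extracted from applying Minkowski's inequality to the family $tB+sC$, and (iii) the hyperbolic parameterization (which reflects the reverse-Cauchy--Schwarz character of the bilinear form $V$ in $\RL^2$) in order to collapse the problem to a single explicit perfect square.
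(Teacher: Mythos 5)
Your proof is correct in outline and reaches the same conclusion as the paper, but by a genuinely different route, and there is one step in the middle that needs to be repaired.

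\textbf{The gap.} You justify the constraint $(pq-ar)^2\le(p^2-ab)(q^2-ac)$, i.e.\ $r\le r_{\max}$, by ``applying Minkowski's inequality to $A$ and the body $tB+sC$ for $t,s\in\RL$'' and then reading off the discriminant of the resulting quadratic form. This does not work: $tB+sC$ is a convex body only when $t,s\ge0$, so Minkowski's inequality only gives $Q(t,s):=t^2(p^2-ab)+2ts(pq-ar)+s^2(q^2-ac)\ge0$ on the closed first quadrant, and a quadratic form that is nonnegative on the first quadrant need \emph{not} have nonpositive discriminant. (Since $p^2-ab\ge0$ and $q^2-ac\ge0$, the two roots of $Q(\cdot,1)$ have positive product; if $pq-ar>0$ they are both negative and $Q\ge0$ on $t,s\ge0$ can coexist with a positive discriminant.) The constraint you want is exactly Fenchel's inequality --- equation (7.69) in Schneider's book --- which is what the paper invokes at the start of its proof; citing it directly (as the paper does) closes the gap, so the flaw is in the derivation, not in the inequality used.

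\textbf{Comparison with the paper's proof.} Both proofs ultimately rest on Fenchel's inequality in the plane, but use it quite differently. The paper also squares the target inequality, rewrites the cross term with (7.69), then scales $B$ by a free parameter $r\ge0$ and shows the resulting quadratic in $r$ has nonnegative leading coefficient and nonpositive discriminant; the discriminant computation is carried through by brute algebraic simplification and is recognized at the very end as the Fenchel inequality rearranged. Your proof instead reduces, after two squarings, to the clean polynomial inequality $M^2\ge abc\,(a+b+c+2p+2q+2r)$ with $M=2pq+pc+bq-ar$, then treats $r=V(B,C)$ itself as the free variable, notes the left side is decreasing and the right side increasing on $[0,r_{\max}]$, and verifies the extremal case $r=r_{\max}$ via the hyperbolic substitution $p=\alpha\beta\cosh\chi$, $q=\alpha\gamma\cosh\psi$, under which the difference of the two sides collapses to the explicit square $\bigl[\alpha\sinh(\chi-\psi)+\gamma\sinh\chi-\beta\sinh\psi\bigr]^2$. (I checked the identity $|A+B||A+C|-|A||A+B+C|-|B||C|=2M$, the bound $pq\ge S$, the evaluation $ar_{\max}=\alpha^2\beta\gamma\cosh(\chi+\psi)$, and the final perfect-square expansion; all are correct.) Your argument is more structured and explanatory --- the monotonicity step isolates exactly where Fenchel's inequality is needed, and the hyperbolic parameterization makes visible the reverse Cauchy--Schwarz structure of planar mixed areas, exhibiting the inequality as a manifest sum of squares at the extremal point rather than relying on an opaque discriminant computation.
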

\begin{proof}
The main tool to prove the above inequality is the following classical inequality of Fenchel that we have already used.  We will need now to use the most general form of this inequality (see \cite[(7.69) pp. 401]{Sch14:book}):
$$
(|A|V(B,C)-V(A, B)V(A,C))^2 \le (V(A,B)^2-|A||B|)
(V(A,C)^2-|A||C|).
$$
Note that the above can be rewritten as
\begin{equation}\label{bon}
|C|V(A,B)^2+|B|V(A,C)^2+|A|V^2(B,C)-|A||B||C|- 2V(A, B)V(A,C)V(B,C)\le 0
\end{equation}
Squaring both sides of (\ref{cort2})  we get
$$
2\big(|A||A+B+C||B||C|\big)^{1/2}+ |A||A+B+C|+|B||C|\leq |A+B||A+C|
$$
We use (\ref{eq:mvf}) to rewrite above as
$$
\left(|A||B||C||A+B+C|\right)^{1/2}+|A|V(B,C)
\le
2V(A,B)V(A,C)+V(A,B)|C|+|B|V(A,C).
$$
Using (\ref{eq:constanttwo})  we may rewrite the above inequality  as
$$
|A||B||C||A+B+C|
\le
\left(2V(A,B)V(A,C)+V(A,B)|C|+|B|V(A,C)- |A|V(B,C)\right)^2.
$$
Consider $rB$, $r\ge0$, instead of $B$:
$$
|A||B||C||A+rB+C|
\le
\left(2V(A,B)V(A,C)+V(A,B)|C|+r|B|V(A,C)- |A|V(B,C)\right)^2.
$$
The above represents a quadratic inequality 
$
\alpha r^2 +\beta r +\gamma \ge 0,
$
with 
$$
\alpha=|B|^2V^2(A,C)-|A||B|^2|C|,
$$
$$
\beta=2\big(2V(A,B)V(A,C)+V(A,B)|C|- |A|V(B,C)\big) |B|V(A,C) - 2|A||B||C|V(B,A+C),
$$
$$
\gamma=\big(2V(A,B)V(A,C)+V(A,B)|C|- |A|V(B,C)\big)^2-|A||B||C||A+C|.
$$
It follows from (\ref{M1}) that $\alpha \ge 0$. It turns out $\beta$ may be negative and thus we need to  show that
$
D=\beta^2-4\alpha \gamma \le 0,
$
which after division by $|B|^2$ becomes
$$
\left[\Big(2V(A,B)V(A,C)+V(A,B)|C|- |A|V(B,C)\Big) V(A,C) - |A||C|V(B,A+C)\right]^2
$$
$$
-\big(V^2(A,C)-|A||C|\big)\Big[\Big(2V(A,B)V(A,C)+V(A,B)|C|- |A|V(B,C)\Big)^2-|A||B||C||A+C| \Big] \le 0.
$$
Simplifying the above inequality and dividing it  by $|A||C|$ we may rearrange the terms to  get that our goal is  to show that
$$
\Big(|C|V(A,B)^2-2V(A,B)V(A,C)V(B,C)+|A|V^2(B,C)\Big)\Big(|A|+2V(A,C)+|C|\Big)
$$
$$
+ \big(V^2(A,C)-|A||C|\big)|B|(|A+C|) \le 0.
$$
Factoring out $|A+C|$ we get that our goal is to show that 
$$
\Big(|C|V(A,B)^2-2V(A,B)V(A,C)V(B,C)+|A|V^2(B,C)+|B|V^2(A,C)-|A||C||B|\Big)
|A+C| \le 0. 
$$
Finally, the above inequality  follows from (\ref{bon}).

\end{proof}

From the failure of log-submodularity on the space of convex bodies for $n\geq 3$
(observed independently by Nayar and Tkocz \cite{NT17} and a subset of the authors \cite{FMZ22}), we know that inequality \eqref{cortn} cannot possibly hold
for $n\geq 3$ if $A$ is an arbitrary convex body. Of course, Courtade's conjecture
could still be true since it only considers the case $A=B_2^n$. We note 
that a weaker version of the conjecture, namely,
$$
|B_2^n||B_2^n+B+C|\leq |B_2^n+B||B_2^n+C|
$$
was proved in \cite[Theorem 4.12]{FMZ22} in the special case 
when $B$ is a zonoid and $C$ is an arbitrary convex body.

\subsection{Parallelotopes in general dimension}
\label{ss:paral}

It is clear that 3. from Theorem~\ref{th:linear} holds for ellipsoids. Indeed, after applying a linear transformation, it reduces to the following inequality: $|B_2^n||B_2^{n-2}|\le |B_2^{n-1}|^2$, which follows from the log-convexity of the Gamma function.
In the next theorem, we  prove that 3. from  Theorem~\ref{th:linear} holds for the class of parallelotopes.

\begin{thm}\label{th:par} Let $A$ be a parallelotope in $\RL^n$ and $u,v \in S^{n-1}$, then
$$
|A|\,|P_{[u,v]^\bot}A|_{n-2}\sqrt{1-\langle u, v\rangle^2}\le   |P_{u^\bot}A|_{n-1}\,|P_{v^\bot}A|_{n-1},
$$
with equality if and only, when $A=\sum_{i=1}^n[a_i, a_i+w_i],$ for some $a_i, w_i$ in $\RL^n$ we have that
$u=\sum_{i\in I}\lambda_i w_i$ and $v=\sum_{i\in I^c}\lambda_i w_i$ for some $I \in \{1,\dots, n\}$ and $\lambda_1,\dots, \lambda_n$ in $\RL^n$.
\end{thm}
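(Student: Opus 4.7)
After translating so that $A=\sum_{i=1}^n[0,w_i]$ (translations preserve volume and projections), assume first that the $w_i$ are linearly independent (otherwise $|A|=0$ and both sides vanish) and introduce the dual basis $w_1^*,\dots,w_n^*$ with $\langle w_j^*,w_k\rangle=\delta_{jk}$. Set $\alpha_j=\langle w_j^*,u\rangle$ and $\beta_j=\langle w_j^*,v\rangle$, so that $u=\sum_j\alpha_j w_j$ and $v=\sum_j\beta_j w_j$. I will express every quantity in the desired inequality in terms of $|A|=|\det M|$ (where $M$ has columns $w_1,\dots,w_n$) and the numbers $\alpha_j,\beta_j$.

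For the projection formulas, the key identity is that replacing columns $w_j,w_k$ of $M$ by $u,v$ gives a determinant equal to $\det M\cdot(\alpha_j\beta_k-\alpha_k\beta_j)$; this is the $2\times 2$ minor of the adjugate substitution $M^{-1}N$. Using \eqref{form-vol-zo} for $P_{u^\bot}A$ and $P_{v^\bot}A$ (which are zonotopes in hyperplanes with generators $P_{u^\bot}w_i$, $P_{v^\bot}w_i$), and the relation $|\det_{u^\bot}(P_{u^\bot}w_1,\ldots,\widehat{P_{u^\bot}w_j},\ldots,P_{u^\bot}w_n)|=|\det(u,w_1,\ldots,\widehat{w_j},\ldots,w_n)|$ for $u\in S^{n-1}$, I obtain
\[
|P_{u^\bot}A|_{n-1}=|A|\sum_{j=1}^n|\alpha_j|,\qquad |P_{v^\bot}A|_{n-1}=|A|\sum_{j=1}^n|\beta_j|.
\]
Likewise, writing an orthonormal basis $\tilde u,\tilde v$ of $\mathrm{span}(u,v)$ and using $u\wedge v=\sqrt{1-\langle u,v\rangle^2}\,\tilde u\wedge\tilde v$, the analogous computation for the $(n-2)$-dimensional projection gives
\[
|P_{[u,v]^\bot}A|_{n-2}\sqrt{1-\langle u,v\rangle^2}=|A|\sum_{1\le j<k\le n}|\alpha_j\beta_k-\alpha_k\beta_j|.
\]

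With these formulas the inequality of the theorem reduces, after dividing by $|A|^2$, to the purely elementary claim
\[
\sum_{1\le j<k\le n}|\alpha_j\beta_k-\alpha_k\beta_j|\;\le\;\Bigl(\sum_{j=1}^n|\alpha_j|\Bigr)\Bigl(\sum_{k=1}^n|\beta_k|\Bigr),
\]
which follows from the triangle inequality $|\alpha_j\beta_k-\alpha_k\beta_j|\le|\alpha_j\beta_k|+|\alpha_k\beta_j|$, since the right-hand side above equals $\sum_j|\alpha_j\beta_j|+\sum_{j<k}(|\alpha_j\beta_k|+|\alpha_k\beta_j|)$. Equality forces both $\sum_j|\alpha_j\beta_j|=0$ and equality in every triangle inequality term, so for each $j$ one has $\alpha_j=0$ or $\beta_j=0$. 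Taking $I=\{j:\beta_j=0\}$ yields $u=\sum_{i\in I}\alpha_i w_i$ and $v=\sum_{i\in I^c}\beta_i w_i$, matching the stated equality case (with $\lambda_i=\alpha_i$ for $i\in I$ and $\lambda_i=\beta_i$ for $i\in I^c$).

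The main obstacle is the combinatorial bookkeeping of the three projection formulas, especially getting the $\sqrt{1-\langle u,v\rangle^2}$ factor to cancel correctly in the $(n-2)$-dimensional projection; once the determinant substitution identity is in hand, everything is linear algebra and a one-line triangle-inequality estimate. The case of linearly dependent $u,v$ is trivial ($\sqrt{1-\langle u,v\rangle^2}=0$), and linearly dependent $w_i$ collapses to $|A|=0$.
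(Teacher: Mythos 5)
Your proof is correct and follows essentially the same route as the paper's: both reduce the inequality, via the mixed-volume/determinant expansion of the projection volumes, to the elementary estimate $\sum_{j<k}|\alpha_j\beta_k-\alpha_k\beta_j|\le\bigl(\sum_j|\alpha_j|\bigr)\bigl(\sum_k|\beta_k|\bigr)$ and then to the triangle inequality. The only cosmetic difference is that the paper applies a linear transformation to reduce $A$ to the unit cube and works with the coordinates of the transformed $u,v$, while you keep $A=\sum[0,w_i]$ and read the same numbers off via the dual basis $w_j^*$ --- these are the same $\alpha_j,\beta_j$. One small remark on your equality case: you list the two conditions $\sum_j|\alpha_j\beta_j|=0$ and equality in each triangle-inequality term, but the first already implies the second (if $\alpha_j\beta_j=0$ for all $j$ then, for each pair $j<k$, at least one of $\alpha_j\beta_k$, $\alpha_k\beta_j$ vanishes), so only the first is needed, which is exactly how the paper states it.
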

\begin{proof}
We  use the representation of the volume of projections using mixed volumes (\ref{eq:proj}) to restate the above statement as
\[
|A|V(A[n-2],[0,u],[0,v])\le \frac{n}{n-1}V(A[n-1],[0,u])V(A[n-1],[0,v]).
\]
Applying a linear transformation, we may assume $A=[0,1]^n=\sum_{i=1}^n [0, e_i]$. Thus
$$
V\left(A[n-1], [0, u]\right) = \frac{1}{n} \sum_{|I|=n -1}|\det(u,(e_i)_{i\in I})|=\frac{1}{n}\sum_{i=1}^n |u_i|,
$$
 $$
V\left(A[n-2], [0, u], [0, v]\right) = \frac{1}{n(n-1)} \sum_{|I|=n -2}|\det(u,v, (e_i)_{i\in I})|= \frac{1}{n(n-1)} \sum_{i<j} |u_i v_j - u_j v_i|.
$$
Finally we need to show
\begin{equation}\label{easy}
 \sum_{i< j} |u_i v_j - u_j v_i|  \le  \sum |u_i|  \sum |v_j|,
\end{equation}
which follows from the triangle inequality. The equality in (\ref{easy}) is only possible if and only if $u_iv_i =0$ for every $i \in \{1,\dots, n\},$ which implies the desired equality case.
\end{proof}


We note that every zonotope $A$ can be seen as an orthogonal projection of a high dimensional cube. Unfortunately, Theorem \ref{th:par} can not be generalized directly to the case of projection of higher co-dimensions (as it is done in 3. from Theorem \ref{thm:zon} below), indeed such direct generalization requires Theorem \ref{th:par} for all zonotopes in place of the cube. Thus we prove this property directly for the case of parallelotopes in the next theorem.

\begin{thm} Let $A$ be a parallelotope in $\RL^n$, then
\begin{equation}\label{eq:inequality}
|A||P_{E\cap F} A| \le |P_EA||P_FA|.
\end{equation}
for any subspaces $E,F$ of $\RL^n$ such that $E^\perp \subset F$.
\end{thm}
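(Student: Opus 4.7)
The plan is to exploit the fact that $E^\perp \subset F$ is equivalent to $E^\perp \perp F^\perp$, so that $\RL^n$ admits the orthogonal decomposition $\RL^n=G\oplus E^\perp\oplus F^\perp$, where $G:=E\cap F$ has dimension $r:=p+q-n$ ($p=\dim E$, $q=\dim F$). Choosing an orthonormal basis aligned with this decomposition yields a partition $[n]=\alpha\sqcup\beta\sqcup\gamma$ of sizes $r, n-p, n-q$ with $G, E^\perp, F^\perp$ the coordinate subspaces indexed by $\alpha,\beta,\gamma$; then $E$ corresponds to the index set $\alpha\cup\gamma$ and $F$ to $\alpha\cup\beta$. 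Writing $A=\sum_{i=1}^n[0,w_i]$ and denoting by $W$ the $n\times n$ matrix with columns $w_i$, by $W_S$ its submatrix with rows in $S\subset[n]$, and by $W_S[T]$ the further restriction to columns in $T\subset[n]$, formula \eqref{form-vol-zo} in these coordinates gives
\[
|A|=|\det W|,\quad|P_G A|_r=\sum_{|K|=r}|\det W_\alpha[K]|,\quad|P_E A|_p=\sum_{|I|=p}|\det W_{\alpha\cup\gamma}[I]|,\quad|P_F A|_q=\sum_{|J|=q}|\det W_{\alpha\cup\beta}[J]|.
\]

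The heart of the argument is a pointwise estimate: for each $K\subset[n]$ with $|K|=r$,
\[
|\det W|\cdot|\det W_\alpha[K]|\;\leq\;\sum_{(L,M)}|\det W_{\alpha\cup\gamma}[K\cup M]|\cdot|\det W_{\alpha\cup\beta}[K\cup L]|,
\]
the sum running over ordered partitions $(L,M)$ of $[n]\setminus K$ with $|L|=n-p$ and $|M|=n-q$. If $\det W_\alpha[K]=0$ this is trivial; otherwise, permuting columns so those in $K$ come first and applying the Schur complement formula yields $\det W=\pm\det W_\alpha[K]\cdot\det\widetilde N$, $\det W_{\alpha\cup\beta}[K\cup L]=\pm\det W_\alpha[K]\cdot\det\widetilde B[L]$, and $\det W_{\alpha\cup\gamma}[K\cup M]=\pm\det W_\alpha[K]\cdot\det\widetilde C[M]$, where $\widetilde B:=W_\beta[K^c]-W_\beta[K]\,W_\alpha[K]^{-1}\,W_\alpha[K^c]$, $\widetilde C$ is defined analogously, and $\widetilde N$ is the $(n-r)\times(n-r)$ matrix with row blocks $\widetilde B, \widetilde C$. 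Cancelling the common factor $|\det W_\alpha[K]|^2$, the pointwise estimate reduces to
\[
|\det\widetilde N|\;\leq\;\sum_{(L,M)}|\det\widetilde B[L]|\cdot|\det\widetilde C[M]|,
\]
which is immediate from Laplace expansion of $\det\widetilde N$ along its top $n-p$ rows combined with the triangle inequality.

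Finally, summing the pointwise estimate over all $K$ with $|K|=r$ produces $|A|\cdot|P_G A|_r$ on the left, while the bijection $(K,L,M)\leftrightarrow(I,J):=(K\cup M,K\cup L)$ between partitions of $[n]$ of sizes $(r,n-p,n-q)$ and pairs $(I,J)$ with $|I|=p,|J|=q,I\cup J=[n]$ identifies the right side with the portion of $|P_E A|_p\cdot|P_F A|_q=\sum_{I,J}|\det W_{\alpha\cup\gamma}[I]|\cdot|\det W_{\alpha\cup\beta}[J]|$ corresponding to pairs with $I\cup J=[n]$, which is at most the full product. The main obstacle is spotting that, once the coordinate system is aligned with $G\oplus E^\perp\oplus F^\perp$, a single Schur factorization reduces the whole inequality to a trivial Laplace/triangle estimate; the degeneracy $\det W_\alpha[K]=0$ is a minor technicality handled by triviality of the left side.
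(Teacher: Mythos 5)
Your proof is correct, and it takes a route that is structurally parallel to but genuinely distinct from the paper's. The paper reduces to coordinates by writing $A=T([0,1]^n)$ and then, via the determinant-reciprocity trick in (\ref{projcubef}), expresses the relevant projection volumes as sums of absolute minors of the \emph{inverse} matrix $T^{-1}$; the estimate is then a single Laplace expansion (along columns $1,\dots,m$) of each $(m+j)\times(m+j)$ minor of $T^{-1}$, followed by the triangle inequality and a final enlargement of the index set (which the paper writes as an equality, but which is in fact the inequality $\sum_{I\cap J=\emptyset}\le\sum_{I,J}$ --- harmless, since it points the right way). You instead work directly with the edge matrix $W=T$ and the standard minor formulas for projection volumes of a zonotope, localize the inequality to a \emph{pointwise} estimate indexed by the $r$-element column sets $K$, and handle each such term by a Schur complement factorization with pivot $W_\alpha[K]$ that turns $|\det W|\,|\det W_\alpha[K]|$ into a single $(n-r)\times(n-r)$ determinant, to which Laplace expansion plus the triangle inequality apply cleanly; summing over $K$ and identifying the bijection $(K,L,M)\leftrightarrow(I,J)=(K\cup M,K\cup L)$ then gives the result after again enlarging the index set. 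The two arguments are "dual" through the Jacobi complementary-minor identity relating minors of $T$ to minors of $T^{-1}$, but yours avoids passing to $T^{-1}$ at the cost of introducing Schur complements; this makes the intermediate formulas a little heavier notationally, but it is arguably more transparent why the coefficient in front ends up being exactly $|A|\cdot|P_GA|$, and it isolates the combinatorics (the bijection between triples and covering pairs) more explicitly. Both proofs ultimately rest on the same elementary facts: orthogonal alignment with $G\oplus E^\perp\oplus F^\perp$, a generalized Laplace expansion, and the triangle inequality.
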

\begin{proof}

Notice that (\ref{eq:inequality}) is invariant under application of rotation $S\in O(n)$ to the parallelotope $A$ and subspaces $E$ and $G.$ Thus, without loss of generality, we may assume $A=T\left(\sum_{i=1}^n [0, e_i]\right),$ for some  $T \in GL(n)$ and $E=\{e_1, \dots, e_m\}^\perp$ and $F=\{e_{m+1}, \dots e_{m+j}\}^\perp$. Then
$$
 |P_{E} \left(T \sum_{i=1}^n [0, e_i]\right)|_{n-m}=|  \sum_{i=1}^n [0, P_{E}(Te_i)]|_{n-m}
 = | \sum_{i=1}^n [0, P_E Te_i]+\sum_{k=1}^m[0, e_k]|_{n},
$$
where the last equality follows from  $\sum_{i=1}^n [0, P_{E}(Te_i)]|_{n-m} \subset E=\{e_1, \dots, e_m\}^\perp$. Thus, 
\begin{align} \label{projcubef}
|P_{E} \left(T \sum_{i=1}^n [0, e_i]\right)|_{n-m} =  & \sum_{|I|=n-m} |\det(e_1, \dots, e_m, \{P_E T e_i\}_{i \in I}|_{n}\nonumber \\=&\sum_{|I|=n-m} |\det(e_1, \dots, e_m, \{Te_i\}_{i \in I}|_{n}\nonumber\\
=& |\det(T)| \sum_{|I|=n-m} |\det(T^{-1}e_1, \dots, T^{-1} e_m, \{ e_i\}_{i \in I}|_{n} \nonumber \\=& |\det(T)| \sum_{|J|=m}|\det(\{w_i^J\}_{i=1}^m)|,
\end{align}
where we denote by $w_i=T^{-1} e_i,$ $i=1,\dots, n$ and by $u^J$ we denote the orthogonal projection of vector $u$ onto  the $span\{ e_i\}_{i \in J}$. We  apply (\ref{projcubef}) to get that 
\ben
|P_{E\cap F} \sum_{i=1}^n [0, Te_i]|_{n-j-m} &=& |\det(T)| \sum_{|N|=m+j}|\det (\{w_i^N\}_{i=1}^{m+j})|\\
&=&|\det(T)| \!\!\!  \sum_{|N|=m+j}
 \left|\sum_{|I|=m, I \subset N}\!\!\!\!\!\! \varepsilon (N,I) \det(\{w_i^I\}_{i \le m}) \det(\{w_i^{N\cap I^c}\}_{i > m})\right|\!, 
 \een
 where in the last step we have used the Laplace  formula. Finally,
 \ben
|P_{E\cap F} \sum_{i=1}^n [0, T e_i]|_{n-j-m} &\le&    |\det(T)|  \sum_{N=m+j}
 \sum_{|I|=m,  I\subset N} |\det(\{w_i^I\}_{i \le m})|| \det(\{w_i^{N\cap I^c}\}_{i > m})| \\  
 &= &  |\det(T)| 
 \sum_{|I|=m}  \sum_{N=m+j, I \subset J}|\det(\{w_i^I\}_{i \le m}) ||\det(\{w_i^{N\cap I^c}\}_{i > m})| \\
 &= &  |\det(T)| 
 \sum_{|I|=m}  \sum_{I \cup J, |J|=j, I \cap J=\emptyset}|\det(\{w_i^I\}_{i \le m})|| \det(\{w_i^{J}\}_{i > m})| \\
 &= &  |\det(T)|
 \sum_{|I|=m}  |\det(\{w_i^I\}_{i \le m})|   \sum_{ |J|=j,}| \det(\{w_i^{J}\}_{i > m})| \\
 &= & |\det(T)|^{-1} |P_E\left(T \sum_{i=1}^n [0, e_i]\right)| \, |P_F\left(T \sum_{i=1}^n [0, e_i]\right)|,
 \een 
where the last equality, again, follows from (\ref{projcubef}).
\end{proof}

\section{Inequalities for zonoids}
\label{sec:zonoid}

\subsection{Zonoids in $\RL^3$}
\label{ss:zonoid3}
 
Zonoids form a natural class of bodies which is stable under addition and linear transformations.
  In this section, we  confirm property (4) from Theorem \ref{th:linear} in the class of three dimensional zonoids. Thus, using Theorem \ref{thm:hope}, we get that all properties described in Theorems \ref{thm:logsubmod-equiv} and  \ref{th:linear} are true for this class.
 \begin{thm}\label{thm:hope} Let $A$ be a zonoid in $\RL^3$ and  $u,v\in S^2$. Then
  \begin{equation}\label{eq:hope}
  |A|_3|P_{[u,v]^\bot}A|_{1}\sqrt{1-\langle u, v\rangle^2}\le   |P_{u^\bot}A|_{2}|P_{v^\bot}A|_{2}.
  \end{equation} 
 \end{thm}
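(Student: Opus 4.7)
The plan is to establish \eqref{eq:hope} by direct computation after several reductions, using a Cramer/Plücker-type identity in $\RL^3$. First, by the density of zonotopes in zonoids under the Hausdorff metric and the continuity of volumes and orthogonal projections, it suffices to prove the inequality when $A=\sum_{i=1}^m[0,w_i]$ is a zonotope. Second, since the class of zonoids is closed under linear maps, Theorem~\ref{th:linear} shows that \eqref{eq:hope} (property~3 there) is equivalent to the other properties; in particular, we may apply a linear transformation to reduce to the case where $u$ and $v$ are orthonormal (if they are parallel, both sides of \eqref{eq:hope} vanish). Under this reduction $\sqrt{1-\langle u,v\rangle^2}=1$, and we fix the orthonormal basis $(u,v,c)$ with $c=u\times v$, writing each generator in coordinates $w_i=(a_i,b_i,c_i)$. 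Using the formulas \eqref{form-vol-zo}, the two sides of \eqref{eq:hope} become
\begin{align*}
\text{LHS} &= \biggl(\sum_{i<j<k}|\det(w_i,w_j,w_k)|\biggr)\biggl(\sum_{\ell=1}^m |c_\ell|\biggr),\\
\text{RHS} &= \biggl(\sum_{i<j}|\det(u,w_i,w_j)|\biggr)\biggl(\sum_{i<j}|\det(v,w_i,w_j)|\biggr).
\end{align*}

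The central algebraic tool is a Cramer-type identity: for any vectors $u,w_i,w_j,w_k\in\RL^3$, one has $u\,\det(w_i,w_j,w_k)=\det(u,w_j,w_k)\,w_i-\det(u,w_i,w_k)\,w_j+\det(u,w_i,w_j)\,w_k$. Taking the inner product with $v\times w_\ell$ yields the scalar identity
\[ D_{ijk}\,C_\ell=U_{jk}V_{\ell i}+U_{ki}V_{\ell j}+U_{ij}V_{\ell k},\]
where $D_{ijk}=\det(w_i,w_j,w_k)$, $C_\ell=\det(u,v,w_\ell)$, $U_{ab}=\det(u,w_a,w_b)$, $V_{ab}=\det(v,w_a,w_b)$. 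This is the right tool because each summand $|D_{ijk}||C_\ell|$ of the LHS is then pointwise bounded by a sum of three products $|U_{ab}||V_{cd}|$, each of which is itself a summand in the expansion of the RHS.

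The next step is a global book-keeping: summing the triangle-inequality bound coming from the identity over all triples $\{i,j,k\}$ and indices $\ell$, one collects, for each ordered pair $(P,Q)$ of $2$-subsets of $[m]$, the term $|U_P||V_Q|$ with a certain multiplicity. When $m\le 3$, only pairs with $|P\cap Q|=1$ occur, each with multiplicity one, and the bound thus fits strictly inside the expansion of the RHS; this establishes the base case. (The same type of argument, in a different arrangement, underlies the parallelotope inequality of Section~\ref{ss:paral}.)

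The main obstacle arises for $m\ge 4$: the disjoint pairs $|P\cap Q|=0$ appear with multiplicity two in the triangle bound, whereas the diagonal terms $|U_P||V_P|$ do not appear at all. Small examples (e.g.\ the four generators $e_1\pm\varepsilon e_3,\ e_2\pm\varepsilon e_3$) show that the sum of disjoint cross-terms may exceed the diagonal, so the naive triangle inequality overshoots and cannot alone give the target bound. The plan to overcome this is to exploit the signs in the Cramer identity together with the further Plücker relation
\[ D_{jk\ell}C_i-D_{ik\ell}C_j+D_{ij\ell}C_k-D_{ijk}C_\ell=0, \]
coming from the linear dependence of $w_i,w_j,w_k,w_\ell$ in $\RL^3$, so as to pair up triangle bounds that share a common disjoint RHS term and extract cancellations. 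Executing this sign-sensitive combinatorial aggregation—specific to dimension three, where enough Plücker relations are available to close the estimate—is the main technical hurdle, and the completion of the proof amounts to verifying that after the aggregation every pair $(P,Q)$ is charged at most once, thereby matching the expansion of $|P_{u^\bot}A|_2|P_{v^\bot}A|_2$.
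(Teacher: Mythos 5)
Your reductions are correct and coincide with the paper's: approximate by zonotopes, use a linear transformation to take $u,v$ orthonormal, and rewrite both sides of \eqref{eq:hope} via \eqref{form-vol-zo} as sums of absolute determinants. The Cramer identity $D_{ijk}\,C_\ell=U_{jk}V_{\ell i}+U_{ki}V_{\ell j}+U_{ij}V_{\ell k}$ and the multiplicity count are also right, and so is your diagnosis: summing the pointwise triangle bound charges each disjoint pair $(P,Q)$ twice and the diagonal pairs zero times, so the naive estimate overshoots, as your $e_1\pm\varepsilon e_3,\ e_2\pm\varepsilon e_3$ example confirms.

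But the proposal stops exactly there. The "sign-sensitive combinatorial aggregation" using the additional Pl\"ucker relation $D_{jk\ell}C_i-D_{ik\ell}C_j+D_{ij\ell}C_k-D_{ijk}C_\ell=0$ is announced, not executed, and there is no demonstration that the cancellations you hope to extract actually close the deficit (which, per your own count, can be as large as $\sum_{|P\cap Q|=0}|U_P||V_Q|-\sum_{P}|U_P||V_P|$, a quantity that is positive in your example). Until that aggregation is carried out and shown to charge every pair $(P,Q)$ at most once, this is a plan with a genuine gap at its central step, not a proof.

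For comparison, the paper proves \eqref{eq:matr} by a different route that avoids this combinatorics entirely. Viewing both sides as functions of a single coordinate $x_i$ of a generator, one observes that the right-hand side minus the left-hand side is a difference of piecewise-affine convex functions of $x_i$, so it suffices to check the inequality at the kinks of the right-hand side and as $x_i\to\pm\infty$. The limit at infinity reduces to the two-dimensional version of \eqref{eq:hope} (Bonnesen's inequality \eqref{eq:bonnesen}), and each kink imposes a proportionality $(x_i,z_i)\parallel(x_j,z_j)$ that is fed back into the same argument inductively; the terminal cases collapse to elementary two-dimensional determinant bounds. The convexity scheme systematically lowers the dimension instead of trying to dominate the sum term by term, which is precisely what the triangle-inequality approach cannot do. If you want to salvage your route, you must either find the concrete rearrangement that makes the Pl\"ucker cancellations realize the missing $\sum_{|P\cap Q|=0}-\sum_{P}$ balance, or switch to an argument (like the paper's) that does not attempt a pointwise domination of $|D_{ijk}||C_\ell|$.
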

 \begin{proof} Using (\ref{eq:projvec}), inequality (\ref{eq:hope}) is equivalent to 
\begin{equation}\label{eq:zonoidstrong1}
  |A|_3|V(A,[0,u],[0,v])\le \frac{3}{2} V(A[2],[0,u])V(A[2],[0,v]).
\end{equation} 
We assume that $u,v$ are linearly independent (otherwise the inequality is trivial) and note that it is enough to prove (\ref{eq:zonoidstrong1})   in the case of $u=e_1$, $v=e_2$ and any zonoid $A$. Indeed, the more general case then follows by applying the inequality to $T^{-1} A,$ where $T\in GL(3)$ is such that $Te_1=u$ and $Te_2=v$.  Thus our goal is to prove that, for any zonoid $A\subset \RL^3$,
\begin{equation}\label{eq:zonoidstrong2}  |A|_3|P_{[e_1,e_2]^\bot}A|_{1} \le   |P_{e_1^\bot}A|_{2}|P_{e_2^\bot}A|_{2}.
  \end{equation} 
By approximation, it is enough to prove
(\ref{eq:zonoidstrong2}) when $A$ is a zonotope. Suppose that $A=\sum_{i=1}^M [0, u_i],$ where $u_i=(x_i, y_i, z_i)\in\RL^3$. Using \eqref{form-vol-zo}, we get that (\ref{eq:zonoidstrong2}) is equivalent to
\begin{align}\label{eq:matr}
\sum_{1 \le i < j < k \le M}  \left |\det\!\!\begin{pmatrix}
x_i & x_j & x_k \\
y_i & y_j & y_k \\
z_i & z_j & z_k
\end{pmatrix}\right|  \sum_{i=1}^M |z_i|  \le \!\!\!\sum_{1 \le i < j  \le M} \left |\det\!\!\begin{pmatrix}
y_i & y_j \\
z_i & z_j 
\end{pmatrix}\right|\sum_{1 \le i < j \le M} \left |\det\!\!\begin{pmatrix}
x_i & x_j  \\
z_i & z_j 
\end{pmatrix}\right|
\end{align}
We consider $y_1,\dots, y_M$ and $z_1,\dots, z_M$ as fixed, we write $x=(x_1,\dots,x_M)\in\RL^M$ and we define  $f,g:\RL^M\to\RL$ by 
\[
f(x)=\sum_{1 \le i < j \le M} \left |\det\begin{pmatrix}
x_i & x_j  \\
z_i & z_j 
\end{pmatrix}\right|\quad\hbox{and}\quad g(x)=\sum_{1 \le i < j < k \le M}  \left |\det\begin{pmatrix}
x_i & x_j & x_k \\
y_i & y_j & y_k \\
z_i & z_j & z_k
\end{pmatrix}\right|.
\]
We note that $f$ and $g$ are piecewise affine and convex  with respect to $x_i$, for any $1\le i\le M$. We use the following elementary observation. Let $\psi:  \RL \to \RL$ be a convex function. Fix some positive numbers $\{a_j\}_{j=1}^K$, $d\in\RL$ and real numbers $\{c_j\}_{j=1}^K$. To prove that, for all $t\in \RL$,
$$
\psi(t) \le \varphi(t):=\sum_{j=1}^K a_i|t +c_i| +d, 
$$
it is enough to prove the inequality at all critical points $t=-c_i$ of $\varphi$ and at the limit $t\to \pm \infty.$

We shall apply the above argument inductively to $f$ and $g$ as  functions of $x_i$, for  $i \in \{1,\dots, M\}$, successively, with $x_j$ fixed for $j\neq i$.

We start with $x_1$ and  first check the limiting behavior  at infinity (we  also prove the limit at infinity argument as a part of a more general statement below).  We note that, as $x_1 \to \infty$, the left hand side of (\ref{eq:matr}) behaves like
$$
 |x_1|
\left(\sum_{1  < j < k \le M}  \left |\det\begin{pmatrix}
 y_j & y_k \\
 z_j & z_k
\end{pmatrix}\right|\right)\left(\sum_{j=1}^M |z_j| \right) 
$$
and the right hand side of (\ref{eq:matr})  behaves like 
$$
|x_1|\left(\sum_{1 \le   j <k \le M} \left |\det\begin{pmatrix}
y_j & y_k \\
z_j & z_k 
\end{pmatrix}\right|\right)\left(\sum_{j=2}^M  |z_j| \right).
$$
Thus, (\ref{eq:matr}) becomes
$$
\left(\sum_{1  < j < k \le M}  \left |\det\begin{pmatrix}
 y_j & y_k \\
 z_j & z_k
\end{pmatrix}\right|\right)\left(\sum_{j=1}^M |z_j| \right)
\le \left(\sum_{1 \le j < k  \le M} \left |\det\begin{pmatrix}
y_j & y_k \\
z_j & z_k 
\end{pmatrix}\right|\right)\left(\sum_{j=2}^M  |z_j| \right)
$$
or
$$
\left(\sum_{1  < j < k \le M}  \left |\det\begin{pmatrix}
 y_j & y_k \\
 z_j & z_k
\end{pmatrix}\right|\right) |z_1| 
\le \left(\sum_{1 < k  \le M} \left |\det\begin{pmatrix}
y_1 & y_k \\
z_1 & z_k 
\end{pmatrix}\right|\right)\left(\sum_{j=2}^M  |z_j| \right).
$$
The above equation is exactly the $\RL^2$ analog of (\ref{eq:hope}), with $A=\sum_{i=2}^M[0, (y_i, z_i)]$; $u=[1,0]$ and $v=(y_1, z_1)/|(y_1, z_1)|$, thus it holds.

Our next goal is to study the critical points of $f$ with respect to $x_1$, which satisfy 
$$
\det\begin{pmatrix}
x_1 & x_j  \\
z_1 & z_j 
\end{pmatrix} =0, \mbox{ for }   z_j \not=0, \,\,\, 2 \le j \le M.
$$
 When $x_1$ is a solution of the above equation, then $(x_j,z_j)$ is parallel to $(x_1,z_1)$.  Assume (without loss of generality) that $j=2$ and  $(x_1,z_1)=\lambda (x_2,z_2).$ We study (\ref{eq:matr}),  with respect to $x_2$, under the assumption that  $(x_1,z_1)=\lambda (x_2,z_2)$ and continue this algorithm inductively (we will show the general step below).

We must also consider the case when $z_j=0$, for all $j \ge 2$. 
In this case (\ref{eq:matr}) becomes
$$
\left(|z_1|\sum_{1 < j < k \le M}  \left |\det\begin{pmatrix}
 x_j & x_k \\
 y_j & y_k 
\end{pmatrix}\right|\right) |z_1|  \le \left(|z_1|\sum_{ j=2}^M  | y_j|\right)\left(|z_1|\sum_{ j=2}^ M  |
 x_j |\right)
$$
or
$$
\left(\sum_{1 < j < k \le M}  \left |\det\begin{pmatrix}
 x_j & x_k \\
 y_j & y_k 
\end{pmatrix}\right|\right)   \le \left(\sum_{ j=2}^M  | y_j|\right)\left(\sum_{ j=2}^ M  |
 x_j |\right),
$$
which follows immediately from 
\begin{equation}\label{eq:class}
\left |\det\begin{pmatrix}
 x_j & x_k \\
 y_j & y_k 
\end{pmatrix}\right| \le |x_jy_k|+|x_ky_j|.
\end{equation}
  Continuing this process, we arrive to the case  
\begin{equation}\label{eq:asumpt}
(x_1, z_1)=\lambda_1(x_m, z_m), \dots, (x_{m-1}, z_{m-1})=\lambda_{m-1}(x_m, z_m),
\end{equation}
for some $2\le m \le M$. We  also denote $\lambda_m=1$ and we study $f$ and $g$ as functions of $x_m$.

Again, our first step is to confirm (\ref{eq:matr}), when $x_m \to \pm\infty.$ To do so, let us see how the functions $f$ and $g$ changed under (\ref{eq:asumpt}). Let us first consider the terms appearing in $g$:
\begin{equation*}
\left |\det\begin{pmatrix}
x_i & x_j & x_k \\
y_i & y_j & y_k \\
z_i & z_j & z_k
\end{pmatrix}\right|,
\end{equation*}
when $i<j<k$. For $m<i$,  the above determinant doesn't depend on $x_m$, so we only consider the case when $m\ge i$. 
\begin{itemize}
    \item If $m\ge k$, then the determinant is zero.
    \item if $i\le m<j$ then, when $|x_m|\to \infty$
\begin{equation*}
\left |\det\begin{pmatrix}
x_i & x_j & x_k \\
y_i & y_j & y_k \\
z_i & z_j & z_k
\end{pmatrix}\right| \sim |x_i| \left |\det\begin{pmatrix}
 y_j & y_k \\
 z_j & z_k
\end{pmatrix}\right| = |\lambda_i| |x_m| \left |\det\begin{pmatrix}
 y_j & y_k \\
 z_j & z_k
\end{pmatrix}\right|
\end{equation*} 
\item $j \le m<k$ then, when $|x_m|\to \infty,$ observing that $\lambda_i=z_i/z_m$ and $\lambda_j=z_j/z_m$ we get
\begin{equation*}
\left |\det\!\!\begin{pmatrix}
x_i & x_j & x_k \\
y_i & y_j & y_k \\
z_i & z_j & z_k
\end{pmatrix}\!\!\right| \!\sim \! \left| \lambda_i x_m \det\!\!\begin{pmatrix}
 y_j & y_k \\
 z_j & z_k
\end{pmatrix} - \lambda_j x_m \det\!\!\begin{pmatrix}
 y_i & y_k \\
 z_i & z_k
\end{pmatrix}\right|= |x_m| \frac{|z_k|}{|z_m|} \left|  \det\!\!\begin{pmatrix}
 y_i & y_j \\
 z_i & z_j
\end{pmatrix}\right|.
\end{equation*} 
 \end{itemize}
We also need to compute the behaviour of the terms appearing in $f$:
$$
\left |\det\begin{pmatrix}
x_i & x_j  \\
z_i & z_j 
\end{pmatrix}\right|,
$$
for $i<j$. When  $m\ge j$,  it is zero and when $m<i$,  it does not dependent on $x_m$. So we assume that $i \le m<j$. We get
$$
\left |\det\begin{pmatrix}
x_i & x_j  \\
z_i & z_j 
\end{pmatrix}\right| \sim |\lambda_i|\,|x_m|\,|z_j|.
$$
Thus to show that the  (\ref{eq:matr}) is true, as $|x_m|\to \infty$, we need to prove that
$$
\left[ (\sum_{i \le m} |\lambda_i|) \sum_{m<j<k<M}  \left |\det\begin{pmatrix}
 y_j & y_k \\
 z_j & z_k
\end{pmatrix}\right|  + \frac{1}{|z_m|} \left( \sum_{m <k} |z_k| \right) \!\!\!\left(\sum_{1\le i <j \le m} \left|  \det\begin{pmatrix}
 y_i & y_j \\
 z_i & z_j
\end{pmatrix}\right| \right) \right]\!\!\! \left(\sum_{i=1}^M |z_i| \right)
$$
$$
\le \left(\sum_{1 \le i < j  \le M} \left |\det\begin{pmatrix}
y_i & y_j \\
z_i & z_j 
\end{pmatrix}\right|\right) \left(\sum_{i \le m} |\lambda_i| \right)  \sum_{j>k}|z_k|.
$$
Multiplying both sides by $|z_m|$, we are reduced to
$$
\left[ (\sum_{i \le m} |z_i| ) \sum_{m<j<k<M}  \left |\det\begin{pmatrix}
 y_j & y_k \\
 z_j & z_k
\end{pmatrix}\right|  + \left( \sum_{m <k} |z_k| \right) \left(\sum_{1\le i <j \le m} \left|  \det\begin{pmatrix}
 y_i & y_j \\
 z_i & z_j
\end{pmatrix}\right| \right) \right] \left(\sum_{i=1}^M |z_i| \right)
$$
$$
\le \left(\sum_{1 \le i < j  \le M} \left |\det\begin{pmatrix}
y_i & y_j \\
z_i & z_j 
\end{pmatrix}\right|\right) \left(\sum_{i \le m} |z_i| \right)  \sum_{j>k}|z_k|
$$
Let $A'=\sum_{i=m}^M[0, (y_i, z_i)]$ and $B'=\sum_{i=1}^m[0, (y_i, z_i)]$, then the above becomes:
$$
\left[ |P_{e_1^\perp}B'| |A'|  + |P_{e_1^\perp}A'| |B'| \right] |P_{e_1^\perp}(A'+B')|
\le |A'+B'||P_{e_1^\perp}B'|  |P_{e_1^\perp}A'|.
$$
This is Bonnesen's inequality (\ref{eq:bonnesen}) in $\RL^2$
$$
 \frac{|A'|_2}{|P_{e_1^\bot}A|_{1}}+\frac{|B'|_2}{|P_{e_1^\bot} B'|_{1}} \le \frac{|A'+B'|_2}{|P_{e_1^\bot} (A'+B')|_{1}}.
$$
Our next step (if $m<M$) is to consider the critical points of $f$, as a function of $x_m$; they are given by the equations
$$
\det\begin{pmatrix}
x_m & x_j  \\
z_m & z_j 
\end{pmatrix} =0,
$$
for $z_j \not=0,$ $j \ge m+1$, if such $z_j$ exists and repeat the process for all $m <M$. If $z_j=0,$ for all $j\ge m+1$, let us confirm the inequality directly (as for the case $m=1$). To calculate
$$
\left|\det\begin{pmatrix}
x_i & x_j & x_k \\
y_i & y_j & y_k \\
z_i & z_j & z_k
\end{pmatrix}\right|
$$
we may consider cases: if $i,j,k \le m$, then the rank of the above matrix is at most to $2$ and the determinant is zero; if $i,j,k \le m$, the matrix has a row of zeros and the determinant is again zero. In the case when 
$i\le m <j<k$, we get
$$
\left|\det\begin{pmatrix}
x_i & x_j & x_k \\
y_i & y_j & y_k \\
z_i & z_j & z_k
\end{pmatrix}\right| =|z_i|\left|\det\begin{pmatrix}
x_j & x_k \\
y_j & y_k 
\end{pmatrix}\right|.
$$
When  $i< j\le m<k$, we use that 
$(x_i, x_j)$ is parallel to $(z_i, z_j)$ to get that
$$
\left|\det\begin{pmatrix}
x_i & x_j & x_k \\
y_i & y_j & y_k \\
z_i & z_j & z_k
\end{pmatrix}\right| =|x_k|\left|\det\begin{pmatrix}
 y_i & y_j \\
z_i & z_j
\end{pmatrix}\right|.
$$
We make a similar analysis on the right hand side of 
(\ref{eq:matr}) which  becomes
\begin{align*}
&\left(\sum_{i=1}^m |z_i| \right) \left[ \left(\sum_{i=1}^m |z_i| \right) \sum_{m<j<k}\left|\det\begin{pmatrix}
 x_j & x_k \\
y_j & y_k 
\end{pmatrix}\right|+ \sum\limits_{k>m} |x_k| \sum_{i<j\le m} \left|\det\begin{pmatrix}
 y_i & y_j \\
z_i & z_j
\end{pmatrix}\right|
\right]\\
& \le 
\left[\left(\sum_{i=1}^m |z_i| \right) 
\left(\sum_{j>m} |y_j|\right)+  \sum_{i<j\le m} \left|\det\begin{pmatrix}
 y_i & y_j \\
z_i & z_j
\end{pmatrix}\right|
\right]  \left[\left(\sum_{i=1}^m |z_i| \right) 
\left(\sum_{j>m} |x_j|\right)\right]
\end{align*}
The above inequality follows directly by simplification and application of \eqref{eq:class}.

We repeat the above process until we have $m=M$, thus  $(x_i, z_i)=\lambda_{i}(x_M, z_M)$ for each $i=1,\dots,M-1$. Thus, the left hand side of the inequality  (\ref{eq:matr}) is equal to zero. Indeed, each of the following matrices
 $$
\begin{pmatrix}
x_i & x_j & x_k \\
y_i & y_j & y_k \\
z_i & z_j & z_k
\end{pmatrix}
$$
has rank less or equal then $2$.
\end{proof}

\subsection{More equivalent formulations for zonoids}
\label{ss:equiv-z}

In this section, we show some additional equivalences (which thus hold in dimension $3$).

\begin{thm}\label{thm:zon}
Let $n \in {\mathbb N}$, then the following are equivalent.
\begin{enumerate}
\item For every $k\ge n$ and  for every family of vectors $u_1, \dots, u_k$ in $\RL^n$ the function $f:2^{[k]}\to\RL$  defined, for $S\subset[k]$,  by 
$$f(S)=\log\left|\sum_{i\in S}[0,u_i]\right|=\log\sum_{I\subset S,|I|=n}|\det(\{u_i\}_{i\in I})|
$$
is submodular: for all zonoids $A,B,C$ one has
\[
|A||A+B+C|\le|A+B||A+C|.
\]
\item For every  $k\ge n$ and for every family of vectors $u_1\dots, u_k$ in $\RL^n$, for every $u,v\in\RL^n$
\begin{align*}
\sum_{|I|=n}|\det(\{u_i\}_{i\in I})| &\sum_{|I|=n-2}|\det(u,v,(u_i)_{i\in I})|\\& \le \sum_{|I|=n-1}|\det(u,(u_i)_{i\in I})|\sum_{|I|=n-1}|\det(v,(u_i)_{i\in I})|.
\end{align*}
\item For  every zonoid $A$ in $\RL^n$ and every subspaces $E,F$ of $\RL^n$ such that $E^\perp \subseteq F$ we have
$$
|A| |P_{E \cap F} A| \le |P_{E}A| |P_{F} A|.
$$
\item For every $m=1,\dots, n$, for every zonoid $A$ and every orthonormal sequence  $u_1,\dots,u_m$, one has
\[
|A|^{m-1}\left|P_{[u_1,\dots,u_m]^\bot}A\right|_{n-m}\le\prod_{i=1}^m\left|P_{u_i^\bot}A\right|_{n-1}.
\]
\item For every $m=1,\dots, n$ and all zonoids $A, B_1,\dots,B_m$ in $\RL^n$, one has 
\[
|A|^{m-1}V(A[n-m],B_1,\dots, B_m)\le\frac{n^m(n-m)!}{n!}\prod_{i=1}^mV(A[n-1],B_i).
\]
\end{enumerate}
\end{thm}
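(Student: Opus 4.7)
The plan is to establish the cycle $(5) \Rightarrow (1) \Rightarrow (3) \Rightarrow (4) \Rightarrow (5)$, together with the separate equivalence $(1) \iff (2)$. The implication $(5)\Rightarrow(1)$ is essentially a direct appeal to Theorem~\ref{thm:logsubmod-equiv}, which applies because zonoids form a class closed under Minkowski sums and dilations: the $m=2$ case of $(5)$ is exactly condition~5 of that theorem, yielding condition~1 there, i.e., $(1)$ for $m=2$; the general $m$ case of $(1)$ then follows by the iteration noted just after Theorem~\ref{thm:logsubmod-equiv}. For $(1) \iff (2)$, I substitute a zonotope $A=\sum_{i=1}^k[0,u_i]$ and segments $B=[0,u]$, $C=[0,v]$ into $(1)$; expanding via the zonotope volume formula \eqref{form-vol-zo} and canceling the common terms $|A|^2+|A|\sum|\det(u,\cdot)|+|A|\sum|\det(v,\cdot)|$ shows that $(1)$ is precisely $(2)$. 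Density of zonotopes in zonoids in the Hausdorff metric, together with continuity of volume, completes both directions.

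For $(1)\Rightarrow(3)$, I would use a limiting argument. Given subspaces $E,F$ with $E^\perp\subseteq F$, one has $E^\perp\perp F^\perp$ and $(E\cap F)^\perp=E^\perp\oplus F^\perp$; write $k=\dim E^\perp$, $l=\dim F^\perp$. Choose zonoids $\tilde B\subset E^\perp$ and $\tilde C\subset F^\perp$ with positive $k$- and $l$-dimensional volumes (say, Euclidean balls in the respective subspaces). Apply $(1)$ to $(A,\lambda\tilde B,\mu\tilde C)$ and divide by $\lambda^k\mu^l$. Since the mixed volume $V(A[n-j],\tilde B[j])$ vanishes for $j>k$, the projection formula \eqref{eq:proj} yields $\lambda^{-k}|A+\lambda\tilde B|\to|\tilde B|_k|P_E A|_{n-k}$, and symmetrically $\mu^{-l}|A+\mu\tilde C|\to|\tilde C|_l|P_F A|_{n-l}$. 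The orthogonality of $\tilde B$ and $\tilde C$ (so that $\tilde B+t\tilde C$ is their Cartesian product inside $E^\perp\oplus F^\perp$) gives the joint limit
\[
\lambda^{-k}\mu^{-l}\,|A+\lambda\tilde B+\mu\tilde C|\;\longrightarrow\;|\tilde B|_k|\tilde C|_l|P_{E\cap F}A|_{n-k-l}.
\]
Canceling $|\tilde B|_k|\tilde C|_l$ in the limit of $(1)$ gives $(3)$.

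For $(3)\Rightarrow(4)$, I would induct on $m$; the case $m=1$ is trivial. For the inductive step, apply $(3)$ with $E=[u_1,\ldots,u_{m-1}]^\perp$ and $F=u_m^\perp$; orthonormality of $\{u_i\}$ gives $E^\perp=\mathrm{span}(u_1,\ldots,u_{m-1})\subseteq F$, so that $|A|\,|P_{[u_1,\ldots,u_m]^\perp}A|\le|P_{[u_1,\ldots,u_{m-1}]^\perp}A|\,|P_{u_m^\perp}A|$, and the inductive hypothesis on the first right-hand factor finishes the induction. Finally, $(4)\Rightarrow(5)$: by multilinearity and continuity of mixed volumes together with density of finite sums of segments in the class of zonoids, it suffices to prove $(5)$ for $B_i=[0,u_i]$; by continuity once more, one may further assume $u_1,\ldots,u_m$ are linearly independent. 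Pick $T\in GL(n)$ with $Tu_i=e_i$ and apply $(4)$ to $T^{-1}A$ with the orthonormal frame $\{e_i\}$; then $V(TK_1,\ldots,TK_n)=|\det T|\,V(K_1,\ldots,K_n)$ together with the identities \eqref{eq:projvec} and \eqref{eq:proj1} translates the projection inequality back to the mixed-volume inequality $(5)$ for the chosen segments, which gives $(5)$ in full generality.

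The main technical hurdle is the careful identification of the leading coefficient in the joint limit argument for $(1)\Rightarrow(3)$: one must verify that the polynomial $|A+\lambda\tilde B+\mu\tilde C|$ in $(\lambda,\mu)$ has top monomial $\lambda^k\mu^l$ with coefficient $|\tilde B|_k|\tilde C|_l|P_{E\cap F}A|_{n-k-l}$. This reduces to computing $V(A[n-k-l],\tilde B[k],\tilde C[l])$ by applying the projection formula \eqref{eq:proj} to $\tilde B+t\tilde C$ inside the $(k+l)$-dimensional orthogonal direct sum $E^\perp\oplus F^\perp$ and extracting the $t^l$ coefficient, which gives precisely the right multinomial normalization to match $(3)$ after cancellation.
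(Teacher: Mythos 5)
Your proposal is correct, but it takes a noticeably different route from the paper's. The paper does not build its own cycle of implications; instead it translates each statement of Theorem~\ref{thm:zon} into a corresponding statement of Theorem~\ref{thm:logsubmod-equiv} or Theorem~\ref{th:linear} (both already proved as closed cycles of equivalences): $1.\!\iff\!1.$ of Theorem~\ref{thm:logsubmod-equiv} via the local characterization of submodularity, $2.\!\iff\!3.$ of Theorem~\ref{th:linear}, $3.\!\iff\!3.$ of Theorem~\ref{th:linear} by an induction (passing through the intermediate form~\eqref{eq:zn} and a dimension-reduction using cylinders $A\times[0,1]^{n-m}$), then $3.\!\Rightarrow\!4.\!\Rightarrow\!5.$ directly, and finally $5.$ with $m=2$ equals $5.$ of Theorem~\ref{th:linear}. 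Your cycle $(5)\Rightarrow(1)\Rightarrow(3)\Rightarrow(4)\Rightarrow(5)$ plus the side branch $(1)\iff(2)$ is more self-contained: in particular, your $(1)\Rightarrow(3)$ replaces the paper's two-step detour through Theorem~\ref{th:linear} and the $\eqref{eq:z2}\Rightarrow\eqref{eq:zn}$ induction by a single limiting argument, sending $\lambda,\mu\to\infty$ in $|A|\,|A+\lambda\tilde B+\mu\tilde C|\le|A+\lambda\tilde B|\,|A+\mu\tilde C|$ with $\tilde B\subset E^\perp$, $\tilde C\subset F^\perp$ and extracting the leading coefficients. Your identification of the joint leading coefficient $\tfrac{n!}{(n-k-l)!\,k!\,l!}V(A[n-k-l],\tilde B[k],\tilde C[l])=|\tilde B|_k|\tilde C|_l|P_{E\cap F}A|_{n-k-l}$ is correct (it follows from~\eqref{eq:proj} applied in $(E\cap F)^\perp=E^\perp\oplus F^\perp$ and the vanishing of mixed volumes with too many copies of a lower-dimensional body). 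Two small points worth tidying: in $(1)\iff(2)$ your phrase ``shows that (1) is precisely (2)'' is slightly overstated --- what the substitution and cancellation gives is that $(2)$ is precisely the \emph{local} submodularity condition $f(S\cup\{i,j\})+f(S)\le f(S\cup\{i\})+f(S\cup\{j\})$, and you should say you are invoking the standard fact that local submodularity implies submodularity (and then density of zonotopes to pass from zonotopes to zonoids in $(1)$); and in $(4)\Rightarrow(5)$ you should apply $(4)$ to $TA$ rather than $T^{-1}A$ when $T$ is chosen with $Tu_i=e_i$, or equivalently choose $T$ with $Te_i=u_i$ and apply $(4)$ to $T^{-1}A$. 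Neither affects the substance. Your route trades the paper's reliance on the previously-established auxiliary theorems for a cleaner, more direct argument, at the cost of one somewhat delicate two-parameter asymptotic computation.
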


\begin{proof}
The proof is based on translations of the properties described in Theorems \ref{thm:logsubmod-equiv} and \ref{th:linear} to the properties of zonoids.

We first show that 1. in Theorem \ref{thm:zon} is equivalent to 1. in Theorem \ref{thm:logsubmod-equiv} in the class of zonotopes. Indeed,  $f$ is submodular if and only if for every $S,T\subset \{1, \dots, k\}$
$$
\left|\sum_{i\in S \cup T}[0,u_i]\right| \left|\sum_{i\in S\cap T}[0,u_i]\right| \le  \left|\sum_{i\in S }[0,u_i]\right| \left|\sum_{i\in  T}[0,u_i]\right|,
$$
which is 1. in Theorem \ref{thm:zon} for zonotopes $A=\sum_{i\in S \cap T}[0,u_i]$, $B_1=\sum_{i\in S \setminus  T}[0,u_i]$ and $B_2=\sum_{i\in T \setminus  S}[0,u_i]$. We use  (\ref{form-vol-zo}) to finish the proof.

We next note that 2. in Theorem \ref{thm:zon} is equivalent to 3. of Theorem \ref{th:linear} in the case of zonotopes. Assume by homogeneity that $u,v\in S^{n-1}$. Then we  apply (\ref{form-vol-zo}) to get:
$$
\left|\sum_{i=1}^m[0,u_i]\right|=\sum_{I\subset [m],|I|=n}|\det(u_i)_{i\in I}| \mbox{ and }
\left|P_{v^\perp} \left(\sum_{i=1}^m[0,u_i] \right)\right|=\sum_{I\subset [m],|I|=n-1}|\det(v,(u_i)_{i\in I})|
$$
and the similar formula for  the volume of $P_{[u,v]^\perp} \left(\sum_{i=1}^m[0,u_i]\right)$.

 
Next, we  show that 3. is equivalent to 3. from Theorem \ref{th:linear}, which can be restated as 
\be\label{eq:z2}
|A||P_{[e_i,e_j]^\bot}A|_{n-2} \le   |P_{e_i^\bot}A|_{n-1}|P_{e_j^\bot}A|_{n-1},
\ee
for any zonoid $A$  and $i\not =j$, where $(e_1,\dots, e_n)$ is any orthonormal basis. Moreover 3. from Theorem \ref{thm:zon} is equivalent to
\be\label{eq:zn}
|A| |P_{[e_1,\dots e_k]^\perp} A|_{n-k} \le |P_{[e_1, \dots, e_i]^\perp} A|_{n-i} |P_{[e_{i+1},\dots, e_k]^\perp} A|_{n-(k-i)}.
\ee
for any $i < k \le n$ and any zonoid $A$ in $\RL^n$. 
Thus   (\ref{eq:z2}) is  a particular case of
(\ref{eq:zn}). To prove the reverse, we first notice that if   $(\ref{eq:z2})$   holds for any zonoid $A$ in $\RL^n$, then it also must hold for zonoids in any dimension $m \le n$. Indeed, for any zonoid $A$ 
in $\RL^m$, the cylinder $A \times [0,1]^{n-m}$ is a zonoid. Next, we may prove property (\ref{eq:zn}) by induction. Indeed, using (\ref{eq:z2}), it is true for $k=2$ and $i=1$, any $n \in \N$ and any zonoid $A$ in $\RL^n$. Assume the statement is true for some $k \in \N$ any $i < k \le n$. Let us apply the statement to the zonoid 
$P_{e_{k+1}^\perp} A$ to get 
\begin{align}
|P_{e_{k+1}^\perp} A|_{n-1} |P_{[e_1,\dots, e_k, e_{k+1}]^\perp} &A|_{n-(k+1)} \label{eq:ind1}\\
&\le |P_{[e_1, \dots, e_i, e_{k+1}]^\perp} A|_{n-(i+1)} |P_{[e_{i+1},\dots, e_k, e_{k+1}]^\perp} A|_{n-(k+1-i)}. \nonumber 
\end{align}
In addition, we  apply the inductive hypothesis to  $A$ and the subspace spanned by 
$\{e_1,\dots, e_i, e_{k+1}\}$ and the subspace spanned by $e_{k+1}$ to get 
\be\label{eq:ind2}
|A| |P_{[e_1,\dots, e_i, e_{k+1}]^\perp} A|_{n-(i+1)} \le |P_{[e_1, \dots, e_i]^\perp} A|_{n-i} |P_{e_{k+1}^\perp} A|_{n-1}.
\ee
Finally, we multiply (\ref{eq:ind1}) and (\ref{eq:ind2}) to finish the proof.

To prove that 3. implies 4., we apply (\ref{eq:zn}) to $k=m$ and $i=1$, we get that 
\[
\frac{|P_{[u_1,\dots, u_m]^\perp}A|}{|P_{[u_2,\dots, u_m]^\perp}A|}\le 
\frac{|P_{u_1^\perp}A|}{|A|}.
\]
In the same way, for every $1\le i\le m$, one has 
\[
\frac{|P_{[u_i,\dots, u_m]^\perp}A|}{|P_{[u_{i+1},\dots, u_m]^\perp}A|}\le 
\frac{|P_{u_i^\perp}A|}{|A|}.
\]
Taking the product of these inequalities, we get the result.

To prove that 4. implies 5., we use (\ref{eq:projvec}).
Thus (4) gives that for every orthonormal family of vectors $u_1,\dots, u_m$, one has 
\[
|A|^{m-1}V(A[n-m],[0,u_1],\dots, [0,u_m])
\le\frac{n^m(n-m)!}{n!}\prod_{i=1}^mV(A[n-1],[0,u_i]).
\]
Since this inequality is invariant with respect to any linear image of $A$ by an invertible map, it holds also for any independent $u_1,\dots, u_m$. Then, we deduce that 5. holds for any sums of segments. The inequality for zonoids follows by taking limits.

Finally, 5. with $m=2$ is equivalent to 5. in Theorem \ref{th:linear}. 
\end{proof}
Notice that 5. of Theorem \ref{thm:zon} can be rephrased by saying that, for any zonoid $A$ in $\RL^n$, the function $f:[n]\to\RL$ defined by $f(S)=\log(|P_{[e_i;i\in S]}A|)$ is submodular.

The inequalities analogous to 3. in Theorems  \ref{th:linear}  and \ref{thm:zon} belong to the class of local Loomis-Whitney type inequalities and were studied in many  works, including \cite{GHP02, FGM03, SZ16, AFO14, AAGHV17}, for the most general classes of convex bodies. In the next lemma, we present a new proof of a result from  \cite{GHP02}, the  proof uses the approach of the proof of Theorem 
\ref{th:linear}.
\begin{lem}\label{lem:GHPnew} Consider a convex body $K$ in $\RL^n$ and a pair of orthogonal vectors $u,v \in S^{n-1}$, then
$$
|K||P_{[u,v]^\perp} K| \le \frac{2(n-1)}{n}  |P_{u^\perp} K||P_{v^\perp} K|.
$$
\end{lem}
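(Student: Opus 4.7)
\emph{Proof plan.} The strategy is to adapt the implication $6. \implies 3.$ of Theorem \ref{th:linear}, which combines the Steiner polynomial viewpoint with a symmetrization by a volume-preserving linear map. The absence of the zonoid hypothesis costs us a factor of two, which appears naturally once one needs to replace an arithmetic mean by a geometric mean.

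First, I would consider $P(t) = |K + t([0,u]+[0,v])|$ for $t \ge 0$. Because any mixed volume containing two copies of a segment in the same direction vanishes, the expansion \eqref{eq:mvf} collapses to a quadratic; using the orthonormality of $u,v$ together with \eqref{eq:proj1} and \eqref{eq:projvec} to identify the coefficients, one gets
\[
P(t) = |K| + t\bigl(|P_{u^\perp}K|_{n-1} + |P_{v^\perp}K|_{n-1}\bigr) + t^2\,|P_{[u,v]^\perp}K|_{n-2}.
\]

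Second, Brunn-Minkowski forces $t \mapsto P(t)^{1/n}$ to be concave on $[0,\infty)$. Evaluating $(P^{1/n})''(0)\le 0$ gives $nP(0)P''(0)\le (n-1)P'(0)^2$, which in our setting is
\[
2n\,|K|\,|P_{[u,v]^\perp}K|_{n-2} \le (n-1)\bigl(|P_{u^\perp}K|_{n-1}+|P_{v^\perp}K|_{n-1}\bigr)^2. \qquad (*)
\]
(Equivalently, this is Minkowski's second inequality \eqref{M2} applied with $L=[0,u]+[0,v]$.) The right-hand side is a square of a sum, and the naive AM-GM estimate $(a+b)^2 \ge 4ab$ goes the wrong direction, so $(*)$ alone is insufficient.

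Third, to convert the sum into a product with the correct constant, I would pre-compose with a volume-preserving linear map. Define $T_s$ by $T_s u = su$, $T_s v = s^{-1}v$, and $T_s|_{[u,v]^\perp} = \mathrm{id}$. Using the orthogonal decomposition $\RL^n = \mathrm{span}(u)\oplus\mathrm{span}(v)\oplus[u,v]^\perp$, a direct computation gives
\[
|T_sK|=|K|,\qquad |P_{[u,v]^\perp}(T_sK)|=|P_{[u,v]^\perp}K|,\qquad |P_{u^\perp}(T_sK)|=s^{-1}|P_{u^\perp}K|,\qquad |P_{v^\perp}(T_sK)|=s\,|P_{v^\perp}K|.
\]
Choosing $s=\bigl(|P_{u^\perp}K|_{n-1}/|P_{v^\perp}K|_{n-1}\bigr)^{1/2}$ equalizes the two lateral projections at their common geometric mean $\sqrt{|P_{u^\perp}K|_{n-1}|P_{v^\perp}K|_{n-1}}$, and applying $(*)$ to $T_sK$ turns the sum-of-two-equal-terms squared into $4|P_{u^\perp}K|_{n-1}|P_{v^\perp}K|_{n-1}$, yielding the stated bound.

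There is no serious obstacle. The trickiest point is recognizing that $(*)$ alone is not enough and that the scale invariance of the target inequality under the diagonal map $T_s$ (which preserves $|K|$ and the codimension-$2$ projection while sliding volume between the two lateral projections) exactly repairs the gap between $(a+b)^2$ and $4ab$.
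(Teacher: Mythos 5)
Your proof is correct and is essentially the same as the paper's: both reduce to Minkowski's second inequality $|K|\,V(K[n-2],L[2])\le V(K[n-1],L)^2$ (which is what your step $(P^{1/n})''(0)\le 0$ encodes) applied to a suitably balanced zonotope $L$. The paper places the balancing weight directly on $L$, taking $L=[0,u]+\alpha[0,v]$ with $\alpha=|P_{u^\perp}K|/|P_{v^\perp}K|$, whereas you keep $L=[0,u]+[0,v]$ and instead apply the unimodular map $T_s$ to $K$; by $GL_n$-equivariance of mixed volumes these coincide upon setting $\alpha=s^2$.
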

\begin{proof}
Let $L=[0,u]+\alpha [0,v],$ where $\alpha =|P_{u^\perp} K|/|P_{v^\perp} K|,$ noticing that the case $|P_{v^\perp} K|=0$ is trivial. 
Then 
$$n V(K[n-1], L)= |P_{u^\perp}K|_{n-1}+\alpha|P_{v^\perp}K|_{n-1} \mbox{ and } \frac{n(n-1)}{2}V(K[n-2], L[2])= \alpha|P_{[u,v]^\perp} K|.
$$
Using Minkowski's second inequality (\ref{M2}), we get
$$
 \left(\frac{|P_{u^\perp} K|+ \alpha |P_{v^\perp} K|}{n}\right)^2 \ge \frac{2 \alpha}{n(n-1)}  |K||P_{[u,v]^\perp} K|
$$
we substitute $\alpha =|P_{u^\perp} K|/|P_{v^\perp} K|$ to finish the proof.
\end{proof}

The main tool in the proof of Lemma \ref {lem:GHPnew} is Minkowski's second inequality, which relies on the fact that the polynomial $Q(t)=|K+tL|$ raised to the power $1/n$ is a concave function for $t\ge 0$.
We conjecture that the concavity properties of this polynomial can be improved for $Z$ being a zonoid and $L$ being a finite sum of $m\le n$ segments. More precisely, we conjecture that for any $m\le n$ and any sequence of vectors $u_1, \dots, u_m$, $m \le n$   from $\RL^n$ if  $P(t)=|Z + t \sum_{i=1}^m [0,u_i]|,$ then $P^{1/m}(t)$  is  a concave function for $t \ge 0$. This conjecture would follow if the statements of Theorem  \ref{thm:zon} or Theorem \ref{th:linear} would be true. Still we can prove the following proposition in $\RL^3$.
\begin{prop}\label{propconc} Let $Z$ be a zonoid in $\RL^3$ and 
$u,v$  be two  vectors from $\RL^3$. Let  $P(t)=|Z + t ([0,u]+[0,v])|,$ then $P^{1/2}(t)$  is  a concave function for $t \ge 0$.
\end{prop}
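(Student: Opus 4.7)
The plan is to expand $P$ via equation \eqref{eq:ste}, reduce the concavity of $\sqrt{P}$ to a single discriminant inequality, and then conclude by invoking the local Alexandrov--Fenchel inequality for zonoids in $\RL^3$, i.e.\ Theorem \ref{thm:hope} (equivalently, property 5 of Theorem \ref{thm:zon}).

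Set $L=[0,u]+[0,v]$. Since $L$ lies in a $2$-dimensional affine subspace of $\RL^3$, one has $V(L[3])=0$, so \eqref{eq:ste} specializes to
\[
P(t)=|Z|+3t\,V(Z[2],L)+3t^2\,V(Z,L[2]).
\]
This is a polynomial of degree at most $2$ in $t$ with non-negative coefficients. For any quadratic $Q(t)=a+bt+ct^2$ with $a,b,c\ge 0$ and $a>0$, a direct computation gives $(Q^{1/2})''(t)=\frac{4ac-b^2}{4\,Q(t)^{3/2}}$, so $Q^{1/2}$ is concave on $\{t\ge 0\}$ if and only if $b^2\ge 4ac$. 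Applied to $P$, this reduces the proposition to
\[
3\,V(Z[2],L)^2\ \ge\ 4\,|Z|\,V(Z,L[2]).
\]

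Next, I would use multilinearity of mixed volumes together with the identity $V(Z,[0,w],[0,w])=0$ for any $w\in\RL^3$ (which follows from the fact that $|Z+s[0,w]|=|Z|+s\,|P_{w^\bot}Z|_2$ is affine in $s$, so the coefficients of $s^2$ and $s^3$ in its Steiner expansion vanish). Expanding $L=[0,u]+[0,v]$ then yields
\[
V(Z[2],L)=\alpha+\beta,\qquad V(Z,L[2])=2\gamma,
\]
where $\alpha=V(Z[2],[0,u])$, $\beta=V(Z[2],[0,v])$, and $\gamma=V(Z,[0,u],[0,v])$. The target inequality becomes $3(\alpha+\beta)^2\ge 8\,|Z|\,\gamma$, and since $(\alpha+\beta)^2\ge 4\alpha\beta$ by AM--GM, it suffices to establish
\[
|Z|\,\gamma\ \le\ \tfrac{3}{2}\,\alpha\,\beta.
\]
This is precisely property 5 of Theorem \ref{thm:zon} in the case $n=3,\ m=2,\ A=Z,\ B_1=[0,u],\ B_2=[0,v]$, which holds for any zonoid $Z$ in $\RL^3$ by Theorem \ref{thm:hope} together with the equivalences of Theorem \ref{thm:zon}.

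The only nontrivial ingredient is the Alexandrov--Fenchel-type inequality with the sharper constant $\tfrac{3}{2}$, which is already in hand via Theorem \ref{thm:hope}; the improvement over the general convex-body constant $2$ from \eqref{eq:constanttwo} is essential here, since the weaker bound would give only $3(\alpha+\beta)^2\ge 6\,|Z|\,\gamma$, which is insufficient for the target $\ge 8\,|Z|\,\gamma$. Degenerate cases---$u,v$ linearly dependent, or $Z$ of empty interior---are handled directly: in each such case $P$ collapses to an affine function, for which $P^{1/2}$ is manifestly concave, or by approximating $Z$ by full-dimensional zonoids.
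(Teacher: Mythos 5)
Your argument is correct and, modulo notation, it is the same proof as the paper's: both reduce the concavity of $P^{1/2}$ to the nonnegativity of the discriminant of the quadratic $P$, expand $P$ in mixed volumes using the vanishing of $V(Z,[0,w],[0,w])$ and $V(L[3])$, and then close with the sharpened local Alexandrov--Fenchel inequality of Theorem~\ref{thm:hope} (equivalently, 5.~of Theorem~\ref{thm:zon} with $n=3$, $m=2$) combined with AM--GM. The only cosmetic difference is that the paper first normalizes $u,v$ to an orthonormal pair and phrases the key bound via projection volumes, whereas you keep $u,v$ general and phrase it directly in mixed-volume form; the two formulations are interchangeable by Theorem~\ref{th:linear}.
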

\begin{proof} We may assume that the vectors $u, v$ are linearly independent. Then,
applying a linear transformation $T$ to $Z$ and $u, v$ we may assume that vectors $u,v$ are orthogonal to each others and belong to $S^{2}.$ Using that
$Z + t ([0,u]+[0,v])$ is again a zonoid, it is enough for us to show that  $(P^{1/2}(0))'' \le 0$, or $2P(0)P''(0)\le P'(0)^2$. Using that 
\[
P(0)=|Z|,\quad P'(0)=|P_{u^\perp} Z|+ |P_{v^\perp} Z|\quad \hbox{and}\quad P''(0)=2|P_{[u,v]^\bot}Z|.
\]
and  Theorem \ref{thm:hope}, we get 
$$
2P(0)P''(0)=4 |Z||P_{[u,v]^\bot}Z|\le 4 |P_{u^\bot}Z||P_{v^\bot}Z| \le  (|P_{u^\perp} K|+ |P_{v^\perp} K|)^2=P'(0)^2.
$$
\end{proof}

It follows from  part 6. of Theorem \ref{th:linear} and Theorem \ref{thm:hope}  that if  $P(t)=|Z+t([0,u]+[0,v])|$, then $P(t)$, as a polynomial on $\RL$, has only real roots, for any zonoid  $Z$ and any $u,v\in\RL^3$. Thus another way to prove Proposition  \ref{propconc} is to notice the following simple property. Consider a quadratic polynomial $P$ with positive coefficients, then $\sqrt{P(t)}$ is concave for $t\ge 0$ if and only if $P$ has only real roots. Thus, to study concavity of $\sqrt{P_C(t)}=|Z+tC|^{1/2},$ when $C$ is two dimensional, one may study the roots of $P$. The subject of roots of Steiner-type polynomials has attracted a fair bit of attention in the literature, see, e.g., \cite{HHS12} and references therein. Proposition \ref{propconc} makes the following conjecture plausible: 
\begin{ques}\label{2dimbezout}
Let $Z$ be a zonoid in $\RL^n$. Then is it true that the polynomial $P_C(t)=|Z+tC|$ has only real roots for every convex body $C$ of dimension $2$?
\end{ques}
%

This question is directly connected to a question of Adam Marcus that we learned from Guillaume Aubrun about the roots of Steiner polynomials of zonoids,
and, in fact, it was one of the starting point of this  investigation. 

\begin{ques}\label{marcus}
Let $Z$ be a zonoid in $\RL^n$. Then is it true that the Steiner polynomial $P_Z(t)=|Z+tB_2^n|$ has only real roots?
\end{ques}

Observe that, in the plane, even more is true: for any convex bodies $K,L$, the polynomial $P_{K,L}(t)=|K+tL|$ has only real roots.  Indeed, $\sqrt{P_{K,L}(t)}$ is concave for $t\ge0$ by Brunn-Minkowski inequality. This can be also seen from the computing the discriminant and noticing that $V(K,L)^2-|K||L|\ge0$. 

Analogously,  Question \ref{2dimbezout} is equivalent to the following question for mixed volumes: fix $n\ge 3$, and let
$K$  a zonoid in $\RL^n$ and $L$ be a two dimensional zonoid, is it true that
$$
|K|V(K[n-2],L[2]) \le \frac{n}{2(n-1)}V(K[n-1], L)^2?
$$
The above inequality is true for $L$ being a parallelogram, as follows from Theorem \ref{thm:logsubmod-equiv}. But, it is not true for general zonoids, as we  show in the following proposition inspired by a work by Victor Katsnelson \cite{Kat07}. 

\begin{prop}\label{ref:contrm}
Let $n\ge3$. Then there exists a zonoid $Z$ in $\RL^n$ such that the Steiner polynomial $P_Z(t)=|Z+tB_2^n|$ and the polynomial $Q_Z(t)=|tZ+B_2^n|$ has  roots which are not real. 
\end{prop}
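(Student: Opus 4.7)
The plan is to exhibit, for each $n \ge 3$, the ``pancake'' zonoid $Z_h := B_2^{n-1}\times [0,h] \subset \RL^n$ with small thickness $h > 0$, and to show that its Steiner polynomial has non-real roots by combining Newton's inequalities with strict Cauchy--Schwarz. Note that $Z_h$ is a zonoid (product of the zonoid $B_2^{n-1}$ and a segment), and the identity $Q_{Z_h}(t) = t^n P_{Z_h}(1/t)$ together with $P_{Z_h}(0) = |Z_h| > 0$ implies that non-real roots of $P_{Z_h}$ yield non-real reciprocal roots of $Q_{Z_h}$; so it suffices to treat $P_{Z_h}$.

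The first step is to compute $P_{Z_h}(t)$ explicitly via a cylinder-plus-caps decomposition. A point $(x', z') \in \RL^{n-1} \times \RL$ lies in $Z_h + tB_2^n$ iff $\mathrm{dist}(x', B_2^{n-1})^2 + \mathrm{dist}(z', [0,h])^2 \le t^2$. Slicing in the $z'$-direction (middle cylinder $z' \in [0,h]$ plus two symmetric caps of height $t$) and substituting $s = t\sin\phi$ in the cap integrals gives
\[
P_{Z_h}(t) \;=\; \omega_{n-1}\,\bigl[h(1+t)^{n-1} + 2t\,R_n(t)\bigr], \qquad R_n(t) \;:=\; \sum_{k=0}^{n-1}\binom{n-1}{k} I_{k+1}\, t^k,
\]
where $I_k := \int_0^{\pi/2}\cos^k\theta\,d\theta$ are the Wallis integrals and $\omega_{n-1} := |B_2^{n-1}|$. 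At $h=0$ this reduces to $P_{Z_0}(t) = 2\omega_{n-1}\, t\, R_n(t)$.

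The crux is then to show $R_n$ has non-real roots for every $n\ge 3$. Newton's inequalities state that if the polynomial $\sum_{k=0}^{d}\binom{d}{k} a_k\, t^k$ has only real roots, then $a_k^2 \ge a_{k-1}a_{k+1}$ for $1\le k \le d-1$. Applied to $R_n$ (of degree $n-1$) this would require $I_{k+1}^2 \ge I_k I_{k+2}$. However, Cauchy--Schwarz applied to $\cos^{k+1}\theta = \cos^{k/2}\theta\cdot\cos^{(k+2)/2}\theta$ yields the opposite inequality $I_{k+1}^2 \le I_k I_{k+2}$, and strictly so (equality would force $\cos\theta$ to be constant a.e.\ on $[0,\pi/2]$). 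Hence Newton's necessary condition fails for every $n\ge 3$, proving $R_n$ carries a pair of non-real conjugate roots. By continuity of polynomial roots with respect to coefficients, $P_{Z_h}$ retains such a pair for all sufficiently small $h>0$, yielding the desired full-dimensional zonoid.

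The main (and rather minor) difficulty is identifying the right family of zonoids so that the Steiner coefficients line up cleanly with the Wallis integrals, letting the reverse Cauchy--Schwarz inequality directly confront Newton's log-concavity; both the cylinder-plus-caps computation and the continuity step are essentially mechanical once the ansatz $Z_h = B_2^{n-1}\times[0,h]$ is in place.
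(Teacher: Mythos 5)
Your proof is correct, and it follows a genuinely different route from the one in the paper. The paper uses the $2$-dimensional disk $Z = B_2^2\times\{0\}\subset\RL^n$, writes $P_Z(t)=t^{n-2}|B_2^2|\int_{B_2^{n-2}}(1+t\sqrt{1-|x|^2})^2\,dx$, and observes that the integrand is strictly positive a.e.\ for every real $t\neq 0$; hence the only real root of $P_Z$ is $0$, of multiplicity $n-2<n$, so a conjugate pair of non-real roots must remain. Your example is the $(n-1)$-dimensional disk $B_2^{n-1}\times\{0\}$ (and its thickening $Z_h$), which coincides with the paper's only when $n=3$; and the mechanism is different: you compute the Steiner coefficients explicitly as Wallis integrals and then play Newton's log-concavity criterion against the \emph{reverse} inequality $I_{k+1}^2 < I_k I_{k+2}$ supplied by strict Cauchy--Schwarz. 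The paper's argument is shorter and needs no knowledge of the individual coefficients, while yours has the virtue of pinpointing exactly which Newton inequality fails, immediately showing failure at \emph{every} interior index rather than just producing an existence statement, and producing an explicit full-dimensional example $Z_h$ up front instead of appealing to a perturbation remark at the end. One small detail worth making explicit in a write-up: the reduction $Q_Z(t)=t^nP_Z(1/t)$ sends non-real roots of $P_{Z_h}$ to non-real roots of $Q_{Z_h}$ only for the nonzero ones, which is automatic here since $P_{Z_h}(0)=|Z_h|>0$ for $h>0$ (and for $h=0$ the offending roots are those of $R_n$, hence nonzero because $I_1>0$), so the step is sound but should be stated.
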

\begin{proof}
Noticing that $Q_Z(t)=t^nP_Z(1/t),$ it is enough to show that $P_Z(t)$ has a non-real root.
Consider $Z=B_2^{2}\times\{0\}\subset\RL^2\times\RL^{n-2}$. Integrating on sections parallel to $\RL^{2}\times\{0\}$ and changing variable, we get that, for $t\ge0$,
\[
P_Z(t)=|Z+tB_2^n|= |B_2^2|\int_{tB_2^{n-2}}\left(1+\sqrt{t^2-|x|^2}\right)^{2}dx=t^{n-2} |B_2^{2}|\int_{B_2^{n-2}}\left(1+t\sqrt{1-|x|^2}\right)^{2}dx.
\]
 Since this last expression is the restriction of a polynomial to $t\ge0$, the equality between $P_Z(t)$ and the last term is valid on whole $\RL$. Let $t\in\RL$ be fixed. For almost all $x\in B_2^{n-2}$, $(1+t\sqrt{1-|x|^2})^{2}>0$, thus $P_Z(t)>0$, for $t\neq0$, so the only real root of $P_Z$ is $0$
and it is of order $n-2$. Hence $P_Z$ has exactly $2$ non-real roots. 
\end{proof}

Proposition~\ref{ref:contrm} shows that the answers to both Question \ref{2dimbezout}
and Question \ref{marcus} are negative.

Note that the example of the zonoid $Z$ created in Proposition \ref{ref:contrm} is flat (i.e.,
has an affine hull of lower dimension than the ambient space), but one can 
replace it by a non-flat zonoid by using a small perturbation and the continuity of the roots of polynomials.

\section{Inequalities for $L_p$-zonoids}
\label{sec:Lp-z}

Firey \cite{Fir62} extended the concept of Minkowski sum and introduced, for each real $p \ge 1$, a $p$-linear combination
of convex bodies, the so-called $\ell_p$-sum
 $K\oplus_p L$ of the convex bodies $K$ and $L$ containing the origin by: 
\[
h_{ K\oplus_p L}(x)=\left(h_K(x)^p+h_L(x)^p\right)^\frac{1}{p},\quad \forall x\in\RL^n.
\]
For any linear transform $T$ on $\RL^n$, one has $T(K\oplus_p L)=(TK)\oplus_p (TL)$. In a series  of papers, Lutwak \cite{Lut93:1, Lut96} showed that the  Firey sums lead to a
Brunn-Minkowski theory for each $p \ge 1$, including $L_p$-Brunn-Minkowski inequality, definition and inequalities for $L_p$-mixed volumes, $L_p$-Minkowski problem, as well as many other applications. In this section, we  show the connection of the discussion from previous sections to this theory. 

An $L_p$-zonotope is the $\ell_p$-sum of centered segments and an $L_p$-zonoid is the Hausdorff limit of $L_p$-zonotopes. For $p=2$, an $L_2$-zonoid is always an ellipsoid, possibly living in a lower-dimensional subspace, thus it can be written as the sum of $m\le n$ orthogonal segments and is therefore an $L_2$-zonotope. 
The following extension of Conjecture \ref{constrong} is thus natural. 
\begin{ques}\label{queslpzonoids}
Let $p\ge1$ and consider $L_p$-zonoids $A,B$ in $\RL^n$ is it true that
\begin{equation}\label{lp-sumconj}
\left(\frac{|A\oplus_pB|}{|P_{u^\bot}(A\oplus_pB)|_{n-1}}\right)^p\geq \left(\frac{|A|}{|P_{u^\bot}A|_{n-1}}\right)^p+\left(\frac{|B|}{|P_{u^\bot} B|_{n-1}}\right)^p?
\end{equation}
\end{ques}

\subsection{The case $p=2$}
The next theorem gives an affirmative answer to this question in the case $p=2$. 
\begin{thm}\label{thm:Zonoidsstrong2}
Let $A, B$ be a pair of $L_2$-zonoids in ${\mathbb R}^n$ and let $u$ in $S^{n-1}$. Then
$$\left(\frac{|A\oplus_2B|}{|P_{u^\bot}(A\oplus_2B)|_{n-1}}\right)^2\geq \left(\frac{|A|}{|P_{u^\bot}A|_{n-1}}\right)^2+\left(\frac{|B|}{|P_{u^\bot} B|_{n-1}}\right)^2,$$
with equality if and only if $A$ and $B$ have parallel tangent hyperplanes at $\rho_A(u)u$ and $\rho_B(u)u$.
\end{thm}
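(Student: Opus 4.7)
The plan is to exploit the special structure of $L_2$-zonoids: they are precisely the (possibly degenerate) ellipsoids, and the $\oplus_2$-sum corresponds to matrix addition on the shape matrices. I would first handle the non-degenerate case and then recover the general case by approximation. For a full-dimensional ellipsoid $E$, write $h_E(x)^2=\langle Mx,x\rangle$ for a positive-definite matrix $M$; then $E = M^{1/2}B_2^n$ and $|E| = \omega_n\sqrt{\det M}$, while $A\oplus_2 B$ has shape matrix $M_{A\oplus_2 B}=M_A+M_B$. Note that this last identity is the entire reason the $p=2$ case is tractable.

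The next step is to compute the ratio $|E|/|P_{u^\bot}E|_{n-1}$ in matrix terms. Since $P_{u^\bot}E = (P_{u^\bot}M^{1/2})B_2^n$ is the image of $B_2^n$ under a linear map from $\RL^n$ to $u^\bot$, its $(n-1)$-volume equals $\omega_{n-1}\sqrt{\det(P_{u^\bot}MP_{u^\bot}|_{u^\bot})}$. Using the standard determinant identity $\det(P_{u^\bot}MP_{u^\bot}|_{u^\bot}) = (\det M)\langle M^{-1}u,u\rangle$ (easily verified by diagonalizing $M$), one obtains
\[
\left(\frac{|E|}{|P_{u^\bot}E|_{n-1}}\right)^2 \;=\; \frac{\omega_n^2}{\omega_{n-1}^2\,\langle M^{-1}u,u\rangle}.
\]
Plugging this into the desired inequality and canceling $\omega_n^2/\omega_{n-1}^2$ reduces Theorem~\ref{thm:Zonoidsstrong2} to the purely algebraic statement
\[
\frac{1}{\langle (M_A+M_B)^{-1}u,u\rangle}\;\ge\;\frac{1}{\langle M_A^{-1}u,u\rangle}+\frac{1}{\langle M_B^{-1}u,u\rangle}.
\]

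The core of the argument is the variational identity
\[
\frac{1}{\langle M^{-1}u,u\rangle} \;=\; \min_{\langle u,v\rangle=1}\langle Mv,v\rangle,
\]
which follows from Cauchy--Schwarz after the substitution $v=M^{-1/2}w$ (the minimizer is $v^*=M^{-1}u/\langle M^{-1}u,u\rangle$). Applying this to $M_A+M_B$ and using that for every admissible $v$, $\langle(M_A+M_B)v,v\rangle = \langle M_A v,v\rangle + \langle M_B v,v\rangle \ge 1/\langle M_A^{-1}u,u\rangle + 1/\langle M_B^{-1}u,u\rangle$, then taking the infimum gives the matrix inequality. I expect this step to be the cleanest part; the only real obstacle is bookkeeping in the degenerate (lower-dimensional) case, which I would handle by adding $\epsilon B_2^n$ to both $A$ and $B$ and passing to the limit $\epsilon\downarrow 0$.

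For equality, the variational characterization shows that one needs the minimizers $v^*_A = M_A^{-1}u/\langle M_A^{-1}u,u\rangle$ and $v^*_B = M_B^{-1}u/\langle M_B^{-1}u,u\rangle$ to coincide, i.e.\ $M_A^{-1}u\parallel M_B^{-1}u$. Since $M_A^{-1}u$ is (up to scalar) the outer normal to $\partial A$ at the radial point $\rho_A(u)u$ (because $\partial A$ is the level set $\{\langle M_A^{-1}x,x\rangle=1\}$, whose gradient at $x$ is $2M_A^{-1}x$), this is precisely the condition that $A$ and $B$ have parallel tangent hyperplanes at $\rho_A(u)u$ and $\rho_B(u)u$, as claimed.
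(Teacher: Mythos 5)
Your proof is correct and is, in substance, the paper's second proof of this theorem: the identity
\[
\left(\frac{|E|}{|P_{u^\bot}E|_{n-1}}\right)^2 = \frac{\omega_n^2}{\omega_{n-1}^2}\,\rho_E(u)^2 = \frac{\omega_n^2}{\omega_{n-1}^2\,\langle M^{-1}u,u\rangle}
\]
is exactly the paper's formula \eqref{ellipsoid-projection-formula-simp}, and your variational inequality $\langle (M_A+M_B)^{-1}u,u\rangle^{-1}\ge \langle M_A^{-1}u,u\rangle^{-1}+\langle M_B^{-1}u,u\rangle^{-1}$ is exactly the radial $2$-sum comparison $\rho_{A\oplus_2B}(u)^2\ge\rho_A(u)^2+\rho_B(u)^2$ in \eqref{radial-2-sum-simp}, rewritten via $\rho_E(u)^2=1/\langle M^{-1}u,u\rangle$. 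The paper proves that last step by the segment-inclusion argument $h_{K\oplus_2 L}(x)\ge\sqrt{\rho_K(u)^2+\rho_L(u)^2}\,|\langle x,u\rangle|$, which is the same content as your variational characterization $\rho_E(u)^2=\min_{\langle u,v\rangle=1}h_E(v)^2$; your equality analysis via the coincidence of the two minimizers $v^*_A, v^*_B$ likewise matches the paper's treatment via common normal vectors. (The paper's first proof is genuinely different, going through Cauchy--Binet and a Pythagoras argument for low-dimensional parallelotopes.)
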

\noindent We first give two proofs of the inequality and then prove the equality case.
 
 \noindent{\it Proof 1:} For $m\ge n$ and $u_1,\dots,u_m\in\RL^n$, let $U$ be the $n\times m$ matrix whose columns are $u_1,\dots,u_m$. One has $[-u_1,u_1]\oplus_2\cdots\oplus_2[-u_m,u_m]=UB_2^m=\sqrt{UU^*}B_2^n$. Indeed,
\begin{align*}
h^2_{[-u_1,u_1]\oplus_2\cdots\oplus_2[-u_m,u_m]}(x) &=\sum_{i=1}^m \langle u_i, x \rangle^2=
\sum_{i=1}^m \langle U e_i, x\rangle^2 = \sum_{i=1}^m \langle e_i, U^* x\rangle^2=|U^*x|^2 = h^2_{U B_2^m}(x)\\&
=\langle U^*x, U^*x\rangle=\langle\sqrt{UU^*}x, \sqrt{UU^*}x\rangle=|\sqrt{UU^*}x|^2 =h^2_{\sqrt{UU^*}B_2^n}(x).
\end{align*}
Using this and the Cauchy-Binet formula, one has 
\begin{equation}\label{eq:volform}
|[-u_1,u_1]\oplus_2\cdots\oplus_2[-u_m,u_m]|^2=|U B_2^m|^2_n=  \det (UU^*) |B_2^n|^2 =|B_2^n|^2\sum_{|I|=n}(\det(u_i)_{i\in I})^2 .
\end{equation}
Next, using  
$$
P_{u^\bot}([-u_1,u_1]\oplus_2\cdots\oplus_2[-u_m,u_m])=[-P_{u^\bot}u_1,P_{u^\bot}u_1]\oplus_2\cdots\oplus_2[-P_{u^\bot}u_m,P_{u^\bot}u_m], $$
we get that
\[
|P_{u^\bot}([-u_1,u_1]\oplus_2\cdots\oplus_2[-u_m,u_m])|^2=|B_2^{n-1}|^2\sum_{|I|=n-1}(\det(P_{u^\bot}u_i)_{i\in I})^2.
\]
Thus our goal is to prove that, for $N\ge N'$ and $A=[-u_1, u_1]\oplus_2\cdots\oplus_2[-u_{N'},u_{N'}]$ and  $B=[-u_{N'+1}, u_{N'+1}]\oplus_2\cdots\oplus_2[-u_{N},u_{N}]$ we have
\begin{align}
 \frac{\sum\limits_{M\subset\{1,\dots,N'\}, |M|=n}|\det(\{u_{m}\}_{m \in M}|^2} {\sum\limits_{M\subset\{1,\dots,N'\},|M|= n-1}|\det(\{P_{e_1^\perp}u_{m}\}_{m \in M})|^2}& +  
\frac{\sum\limits_{L\subset\{N'+1,\dots,N\},|L|= n}|\det(\{u_{l}\}_{l \in L}|^2} {\sum\limits_{L\subset\{N'+1,\dots,N\}, |L|= n-1}|\det(\{P_{e_1^\perp}u_{l}\}_{l \in L}|^2} \nonumber \\
&\le \frac{\sum\limits_{M\subset\{1,\dots,N\},|M|= n}|\det(\{u_{m}\}_{m\in M}|^2}{\sum\limits_{M\subset\{1,\dots,N\}, |M|= n-1}|\det(\{P_{e_1^\perp}(u_{m})\}_{m\in M}|^2}.
\label{eq:determinantsstrong2}
\end{align}
For $I\subset \{1,\dots,m\}$ let  $U_I$ be the $n\times n$ submatrix built from $U$ by taking the columns with indices in $I$.
Denote by $z_1,\dots,z_n\in\RL^m$ the rows of the matrix $U$. Since $n\le m$, the set $\sum_{i=1}^n[0,z_i]$ is a parallelotope leaving in a $n$-dimensional subspace of $\RL^m$. Thus, 
\[
\left|\sum_{i=1}^n[0,z_i]\right|_n^2=\sum_{|I|=n}(\det\  U_I)^2.
\]
Therefore, we get that, for $m\ge n$, an $n\times m$ matrix $U$ whose rows are $z_1,\dots,z_n\in\RL^m$ and columns $u_1,\dots,u_m\in\RL^n$:
\[
|[-u_1,u_1]\oplus_2\cdots\oplus_2[-u_m,u_m]|_n=|B_2^n|\left|\sum_{i=1}^n[0,z_i]\right|_n.
\]
Let us write $U$ as a block matrix with two blocks   $U=(U_1|0)+(0|U_2)$, with $U_1$ being an $n\times k$ matrix and $U_2$ an $n\times (m-k)$ matrix and we denote $V=(U_1|0)$ and $W=(0|U_2)$. Moreover, denote by $U', V', W'$ the matrices obtained from $U,V,W$  by erasing the $n^{th}$ row. 
Then we only need to prove that 
\[
\frac{\sum_{|I|=n}(\det\  U_I)^2}{\sum_{|I|=n-1}(\det\  U'_I)^2}\ge
\frac{\sum_{|I|=n}(\det\  V_I)^2}{\sum_{|I|=n-1}(\det\  V'_I)^2}+
\frac{\sum_{|I|=n}(\det\  W_I)^2}{\sum_{|I|=n-1}(\det\  W'_I)^2}.
\]
Recall that $z_1,\dots,z_n\in\RL^m$ are the rows of $U$. Thus the rows of $V$ are $P_Ez_1,\dots,P_Ez_n$, where $P_E$ denotes the projection on the first $k$ coordinates and the rows of $W$ are $P_{E^\bot}z_1,\dots,P_{E^\bot}z_n$, where $P_{E^\bot}$ denotes the projection on the $n-k$ last coordinates. Thus we only need to show the following relationship for low-dimensional parallelotopes
\[
\left(\frac{\left|\sum_{i=1}^n[0,z_i]\right|}{\left|\sum_{i=1}^{n-1}[0,z_i]\right|}\right)^2
\ge
\left(\frac{\left|\sum_{i=1}^n[0,P_Ez_i]\right|}{\left|\sum_{i=1}^{n-1}[0,P_Ez_i]\right|}\right)^2
+\left(\frac{\left|\sum_{i=1}^n[0,P_{E^\bot}z_i]\right|}{\left|\sum_{i=1}^{n-1}[0,P_{E^\bot}z_i]\right|}\right)^2.
\]
Using that the volume of a parallelotope is the product of the volume of one of its face and its height, we get that, if $H_n={\mathrm{span}}(z_1,\dots, z_{n-1})$, then
\[
\frac{\left|\sum_{i=1}^n[0,z_i]\right|}{\left|\sum_{i=1}^{n-1}[0,z_i]\right|}=d(z_n,H_n).
\]
So we are reduced to prove that 
\[
d(z_n,H_n)^2\ge d(P_Ez_n,P_EH_n)^2+d(P_{E^\bot}z_n,P_{E^\bot}H_n)^2.
\]
Let $h_n\in H_n$ such that $d(z_n,H_n)=|z_n-h_n|$. By Pythagoras' theorem, 
\[
d(z_n,H_n)^2=|z_n-h_n|^2=\|P_Ez_n-P_Eh_n\|^2+\|P_{E^\bot}z_n-P_{E^\bot}h_n\|^2.
\]
Since $P_Eh_n\in P_EH_n$ and $P_{E^\bot}z_n\in P_{E^\bot}H_n$, we conclude. \qedwhite

\noindent{\it Second proof of the inequality:} We give another proof of Theorem~\ref{thm:Zonoidsstrong2}, using the comparison of the $\ell_2$ sum and the radial $2$-sum. 
For any symmetric convex bodies $K$ and $L$ in $\RL^n$, one has
\begin{equation}\label{radial-2-sum}
K\oplus_2L\supset K \widetilde{+}_2L:=\cup_{u\in S^{n-1}}\left[0,\sqrt{\rho_K^2(u)+\rho_L^2(u)}u\right].
\end{equation}
Indeed, using support functions, the fact that $K\supset\rho_K(u)[-u,u]$ implies that $h_K(x)\ge\rho_K(u)|\langle x,u\rangle|$, for any $x\in\RL^n$ and thus 
\[
h_{K\oplus_2L}(x)=\sqrt{h_K(x)^2+h_L(x)^2}\ge \sqrt{\rho_K(u)^2+\rho_L(u)^2}|\langle x,u\rangle|=\sqrt{\rho_K(u)^2+\rho_L(u)^2}h_{[-u,u]}(x).
\]
The formula (\ref{radial-2-sum}) can be also restated in the language of radial functions:
\begin{equation}\label{radial-2-sum-simp}
\rho_{K\oplus_2L}(u) \ge \sqrt{\rho_K^2(u)+\rho_L^2(u)}, \mbox{ for all } u \in S^{n-1}.
\end{equation}
Next, we notice nice a formula for the volume of the orthogonal hyperplane projection of an ellipsoid (see, for example, \cite{CO84,Riv07}), to which we give a very simple proof. Let $\E=T B_2^n,$ for some positive definite $T$ then
\begin{equation}\label{zonoidproj}
\frac{|P_{u^\bot}\E|}{|\E|\,\|u\|_\E}=\frac{nV(TB_2^n[n-1], [0,u])}{|TB_2^n|\,|T^{-1} u|} =\frac{nV(B_2^n[n-1], [0,T^{-1}u])}{|B_2^n|\,|T^{-1} u|}=\frac{|B_2^{n-1}|}{|B_2^n|}.
\end{equation}
Using the above we get
\begin{equation}\label{ellipsoid-projection-formula-simp}
\frac{|\E|}{|P_u\E|}=\frac{|B_2^n|}{|B_2^{n-1}|}\rho_\E(u).
\end{equation}
Thus, using this formula and \eqref{radial-2-sum}, we deduce that for any ellipsoids $A, B$
\begin{align}\label{ellipses}
\frac{|A\oplus_2 B|^2}{|P_u(A\oplus_2 B)|^2}&=\frac{|B_2^n|^2}{|B_2^{n-1}|^2}\rho_{A\oplus_2B}(u)^2  \nonumber \\
&\ge\frac{|B_2^n|^2}{|B_2^{n-1}|^2}(\rho_{A}(u)^2+\rho_{B}(u)^2)=\frac{|A|^2}{|P_uA|^2}+\frac{|B|^2}{|P_uB|^2}.
\end{align}
\qedwhite

\noindent{\it Proof of the equality case:} 
This second proof  also helps us to treat the equality case.
From (\ref{ellipsoid-projection-formula-simp}) there is equality if and only if
$$ \rho_{A\oplus_2B}(u)^2= \rho_{A}(u)^2+\rho_{B}(u)^2.
$$
The above is equivalent to the $(\rho_{A}(u)^2+\rho_{B}(u)^2)^{1/2} u  \in \partial (A\oplus_2 B)$. From here we get that,  if $n$ is a normal vector to $\partial (A\oplus_2 B)$ at $(\rho_{A}(u)^2+\rho_{B}(u)^2)^{1/2} u $ then
$$
h_{A\oplus_2 B}(n)= (\rho_{A}(u)^2+\rho_{B}(u)^2)^{1/2} \langle u, n\rangle.
$$
The above is equivalent to 
$
h_{A}(n)= \rho_{A}(u) \langle u, n\rangle
\mbox{ and } h_{B}(n)= \rho_{B}(u) \langle u, n\rangle,$
or simply to say that the normal vector to $\partial A$ at $\rho_{A}(u)u$ is the normal vector to $\partial B$ at $\rho_{B}(u)u$. 
\qedwhite

\begin{rem} If in the proof of equality case in the Theorem \ref{thm:Zonoidsstrong2}  we would represent $A=T_1B_2^n$ and $B=T_2B_2^n,$ where $T_1, T_2$ are two positive symmetric matrices. Then $
\|x\|^2_{A}=\langle T_1^{-2} x, x \rangle.
$  and  the normal vector  to $\partial A$ at $\rho_{A}(u)u$ is parallel  to
$T_1^{-2} u$, and the similar statement is true for $B$. Thus our condition on parallel tangent hyperplanes is equivalent to the fact that there is $\lambda>0$ such that 
$
(T_1^{-2} - \lambda T_2^{-2}) u =0
$
or simply that $u$ is an eigenvector for matrix $T_1^{2} T_2^{-2}$. 
\end{rem}
\begin{rem} Observe that in Theorem \ref{thm:Zonoidsstrong2} there is an equality at least for $n$ directions $u$ (not counting $-u$) and there is equality for every $u$ if and only if two ellipsoids are homothetic.
\end{rem}

Let us now present some consequences or Theorem \ref{thm:Zonoidsstrong2}.

\begin{coro}\label{refcorol2}
Let $n$ be a positive integer. Let $A$ and $B$ be $L_2$-zonoids in $\RL^n$ and $u$ in $S^{n-1}$. Then, the function $h$ defined, for $t\ge0$, by 
\[
h_2(t)=\frac{|A\oplus_2(\sqrt{t}B)|^2}{|P_{u^\bot}(A\oplus_2(\sqrt{t}B))|^2_{n-1}}
\]
is concave on $\RL_+$.
\end{coro}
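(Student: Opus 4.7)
The approach is to combine the fact that $L_2$-zonoids are ellipsoids with a variational characterization of $1/\langle S^{-1}u,u\rangle$. Writing $A=T_1 B_2^n$ and $B=T_2 B_2^n$ with $T_1,T_2$ symmetric positive (semi-)definite, a direct support-function computation (exactly as in the first proof of Theorem~\ref{thm:Zonoidsstrong2}) gives
$A \oplus_2 \sqrt{t}\, B = \sqrt{T_1^2 + t T_2^2}\, B_2^n$. Combined with the ellipsoid projection identity \eqref{ellipsoid-projection-formula-simp}, this yields
\[
h_2(t) \;=\; \left(\frac{|B_2^n|}{|B_2^{n-1}|}\right)^{\!2} \rho^2_{A\oplus_2\sqrt t\,B}(u) \;=\; \frac{c_n}{\langle (T_1^2+tT_2^2)^{-1}u,\,u\rangle},
\]
where $c_n=(|B_2^n|/|B_2^{n-1}|)^2$ is a constant. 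The task is therefore reduced to proving that $t\mapsto 1/\langle (M_1+tM_2)^{-1}u,u\rangle$ is concave on $\RL_+$ whenever $M_1$ is positive definite and $M_2$ is positive semi-definite.

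The key tool is the variational identity
\[
\frac{1}{\langle S^{-1}u,u\rangle}\;=\;\min_{\substack{v\in\RL^n\\\langle v,u\rangle=1}}\langle Sv,v\rangle,
\]
valid for any positive-definite symmetric $S$, which follows from a one-line Lagrange multiplier calculation (the minimizer is $v=S^{-1}u/\langle S^{-1}u,u\rangle$). Applying the identity with $S=M_1+tM_2$ gives
\[
\frac{1}{\langle (M_1+tM_2)^{-1}u,\,u\rangle}\;=\;\min_{\langle v,u\rangle=1}\bigl\{\langle M_1v,v\rangle + t\,\langle M_2v,v\rangle\bigr\},
\]
which is a pointwise infimum of a family of affine functions of $t$, and therefore concave in $t$. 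Substituting back into the formula for $h_2$ yields the desired concavity.

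The degenerate case in which $T_1$ is only positive semi-definite can be handled by regularizing $T_1$ with $T_1+\eps I$, applying the above argument, and passing to the limit $\eps\to 0^+$, using continuity of the volumes and projections in the Hausdorff metric. The main conceptual point is really the variational identity expressing $1/\langle S^{-1}u,u\rangle$ as an infimum of linear functionals in the positive-definite variable $S$; once this is recognized, the rest follows from the elementary principle that an infimum of affine functions is concave, so there is no substantive obstacle.
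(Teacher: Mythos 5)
Your proof is correct, but it takes a genuinely different route from the paper. The paper derives the corollary in two lines from Theorem~\ref{thm:Zonoidsstrong2}: using the interpolation identity
\[
A\oplus_2\left(\sqrt{(1-\lambda)s+\lambda t}\,B\right)
=\left(\sqrt{1-\lambda}\,(A\oplus_2\sqrt{s}B)\right)\oplus_2
\left(\sqrt{\lambda}\,(A\oplus_2\sqrt{t}B)\right),
\]
one applies the superadditivity inequality of Theorem~\ref{thm:Zonoidsstrong2} to the right side and uses the homogeneity $|\alpha K|^2/|P_{u^\bot}(\alpha K)|^2=\alpha^2\,|K|^2/|P_{u^\bot}K|^2$ to extract the linear factors $(1-\lambda)$ and $\lambda$, giving midpoint-concavity and hence concavity. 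Your argument instead bypasses Theorem~\ref{thm:Zonoidsstrong2} entirely: writing $A=T_1B_2^n$, $B=T_2B_2^n$ and using \eqref{ellipsoid-projection-formula-simp}, you reduce $h_2$ to the explicit scalar function $c_n/\langle(T_1^2+tT_2^2)^{-1}u,u\rangle$, and then concavity follows from the classical variational identity $1/\langle S^{-1}u,u\rangle=\min_{\langle v,u\rangle=1}\langle Sv,v\rangle$ together with the elementary fact that an infimum of affine functions of $t$ is concave. Each step checks out: the support-function computation giving $A\oplus_2\sqrt{t}B=\sqrt{T_1^2+tT_2^2}\,B_2^n$, the Lagrange-multiplier verification of the variational identity, and the $\eps$-regularization to cover semi-definite $T_1$ are all sound. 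Your route is more computational but also more self-contained, and it is worth remarking that the same variational identity gives Theorem~\ref{thm:Zonoidsstrong2} itself in one line (superadditivity of $S\mapsto\min_{\langle v,u\rangle=1}\langle Sv,v\rangle$ in $S$), so this could serve as a unified alternative proof of both results.
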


\begin{proof}
For any $\lambda\in[0,1]$, one has $A=(\sqrt{1-\lambda}A)\oplus_2(\sqrt{\lambda}A)$.
Thus one deduces that
\[
A\oplus_2\left(\sqrt{(1-\lambda) s+\lambda t}B\right)
=\left(\sqrt{1-\lambda}(A\oplus_2\sqrt{s}B)\right)\oplus_2
\left(\sqrt{\lambda}(A\oplus_2\sqrt{t}B)\right).
\]
Using Theorem \ref{thm:Zonoidsstrong2} and the  homogeneity of volume, we deduce that $h_2$ is concave.
\end{proof}

Next we show that Theorem \ref{thm:Zonoidsstrong2} has the following additional applications:

\begin{thm}\label{thm:strong-proj}
Let $n$ be an integer, then for any $1\le k\le n$ and for every pair of $L_2$-zonoids $A$ and $B$ in ${\mathbb R}^n$  and any ($n-k$)-dimensional subspace $E$ of $\RL^n$ one has
\begin{equation}\label{eq:DCTProj}
\left(\frac{|A\oplus_2B|}{|P_{E}(A\oplus_2B)|_{n-k}}\right)^\frac{2}{k}\geq \left(\frac{|A|}{|P_E A|_{n-k}}\right)^\frac{2}{k}+
\left(\frac{|B|}{|P_E B|_{n-k}}\right)^\frac{2}{k}.
\end{equation}
\end{thm}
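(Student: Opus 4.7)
The plan is to reduce \eqref{eq:DCTProj} to a matrix inequality about Schur complements of positive definite matrices. Since every $L_2$-zonoid is an ellipsoid, I write $A = \sqrt{Q_A}\, B_2^n$ and $B = \sqrt{Q_B}\, B_2^n$ for positive semi-definite symmetric matrices $Q_A, Q_B$, and the identity $h_{A \oplus_2 B}^2 = h_A^2 + h_B^2$ gives $A \oplus_2 B = \sqrt{Q_A + Q_B}\, B_2^n$. Let $V$ be an $n \times (n-k)$ matrix whose columns form an orthonormal basis of $E$; computing the image of $B_2^n$ under the linear map $V^*\sqrt{Q_A}:\RL^n \to \RL^{n-k}$ gives $|P_E A|_{n-k} = |B_2^{n-k}|\sqrt{\det(V^* Q_A V)}$, while $|A| = |B_2^n|\sqrt{\det Q_A}$. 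Using the Schur complement factorization $\det Q = \det(Q_{EE})\, \det(Q/Q_{EE})$ (with $Q_{EE} := V^* Q V$), this yields
\[
\left(\frac{|A|}{|P_E A|_{n-k}}\right)^{2/k} \;=\; \left(\frac{|B_2^n|}{|B_2^{n-k}|}\right)^{2/k} \det\bigl(Q_A/Q_{A,EE}\bigr)^{1/k},
\]
and the analogous identities for $B$ and for $A\oplus_2 B$. After the common dimensional constant cancels, \eqref{eq:DCTProj} reduces to the matrix inequality
\[
\det\bigl((Q_A + Q_B)/(Q_A+Q_B)_{EE}\bigr)^{1/k} \;\ge\; \det\bigl(Q_A/Q_{A,EE}\bigr)^{1/k} + \det\bigl(Q_B/Q_{B,EE}\bigr)^{1/k}.
\]

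To establish this, I would invoke two classical ingredients. The first is the \emph{superadditivity of the Schur complement} in the Loewner order: for positive definite $Q_1, Q_2$,
\[
(Q_1 + Q_2)/(Q_1+Q_2)_{EE} \;\succeq\; Q_1/Q_{1,EE} \,+\, Q_2/Q_{2,EE}.
\]
This is immediate from the variational characterization $\langle (Q/Q_{EE}) y, y \rangle = \inf \{\langle Qv,v\rangle : P_{E^\perp} v = y\}$ together with $\inf(f_1+f_2) \geq \inf f_1 + \inf f_2$. The second ingredient is Minkowski's determinant inequality $\det(R+S)^{1/k} \geq \det(R)^{1/k} + \det(S)^{1/k}$ for positive semi-definite $k \times k$ matrices $R,S$. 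Combining the PSD inequality above with the monotonicity of $\det$ on the PSD cone, and then applying Minkowski with $R = Q_A/Q_{A,EE}$ and $S = Q_B/Q_{B,EE}$, gives precisely the matrix reduction displayed above, which completes the proof.

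The only subtlety I anticipate is the degenerate case in which $A$ or $B$ is a lower-dimensional $L_2$-zonoid, so that $Q_A$ or $Q_B$ (or its $E$-compression) fails to be strictly positive definite and the Schur complement is not classically defined. I would handle this by a standard perturbation: establish the inequality for $Q_A + \varepsilon I_n$ and $Q_B + \varepsilon I_n$ in the strictly positive definite regime and pass to the limit $\varepsilon \downarrow 0$, using continuity of all quantities involved and, if necessary, the convention that $0/0 = 0$ in the ratios. Apart from this bookkeeping, the argument is a short two-step matrix computation once the reduction to the ellipsoid description of $L_2$-zonoids has been carried out.
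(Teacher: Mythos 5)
Your argument is correct, and it takes a genuinely different route from the paper. The paper proves \eqref{eq:DCTProj} by induction on the codimension $k$: the base case $k=1$ is Theorem~\ref{thm:Zonoidsstrong2}, and the inductive step writes $E=F\cap u^\perp$ for an $(n-k)$-dimensional $F$, applies the codimension-one estimate to $P_FA$ and $P_FB$, multiplies by the inductive hypothesis applied to $F$, and finishes with H\"older's inequality. Your proof handles all $k$ simultaneously: expressing $L_2$-zonoids as ellipsoids $\sqrt{Q}\,B_2^n$, exploiting that $\oplus_2$ is addition of the underlying quadratic forms $Q$, and converting $|A|/|P_EA|_{n-k}$ into the determinant of the $k\times k$ Schur complement $Q_A/Q_{A,EE}$, you reduce \eqref{eq:DCTProj} to the matrix inequality
\begin{equation*}
\det\bigl((Q_A+Q_B)/(Q_A+Q_B)_{EE}\bigr)^{1/k}\;\ge\;\det\bigl(Q_A/Q_{A,EE}\bigr)^{1/k}+\det\bigl(Q_B/Q_{B,EE}\bigr)^{1/k},
\end{equation*}
which follows from superadditivity of Schur complements in the Loewner order (obtained cleanly from the variational formula $\langle(Q/Q_{EE})y,y\rangle=\inf\{\langle Qv,v\rangle:P_{E^\perp}v=y\}$) together with Minkowski's determinant inequality. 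Both proofs hinge on the same $p=2$ feature, namely that $\oplus_2$ linearizes on the quadratic form, but yours avoids the induction and H\"older and makes the dependence on $k$ fully explicit, whereas the paper's route is more modular in that it only requires establishing the codimension-one estimate. Your remark about perturbing $Q_A,Q_B$ by $\varepsilon I$ to handle lower-dimensional $L_2$-zonoids is the right fix and matches the approximation arguments used elsewhere in the paper.
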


\begin{proof} The proof goes by induction on $k$. Theorem \ref{thm:Zonoidsstrong2} establishes the case $k=1$ and any $n\ge1$. We assume that the inequality holds for some $1\le k\le n-1$ for all $L_2$-zonoids $A,B$ in $\RL^n$ and all $n-k$ dimensional subspace of $\RL^n$. Let $E$ be a $n-k-1$ dimensional subspace of $\RL^n$. Then one may write $E=F\cap u^\bot$, for some $n-k$ dimensional subspace $F$ and $u\in F^\bot$. Then, applying Theorem~\ref{thm:Zonoidsstrong2} to $P_FA$ and $P_FB$ and using that $P_{u^\bot}\circ P_F=P_E$ we get 
\begin{equation}\label{eq:DCTProj-ind}
\left(\frac{|P_F(A\oplus_2B)|_{n-k}}{|P_{E}(A \oplus_2 B)|_{n-k-1}}\right)^2\geq \left(\frac{|P_FA|_{n-k}}{|P_E A|_{n-k-1}}\right)^2+
\left(\frac{|P_FB|_{n-k}}{|P_E B|_{n-k-1}}\right)^2.
\end{equation}
Applying \eqref{eq:DCTProj} to $E=F$, raising the equation to power $k/2$ and taking the product with \eqref{eq:DCTProj-ind}, we get
\begin{align}
\label{eq:DCTProj-ind-prod}
&\frac{|A\oplus_2B|}{|P_E(A\oplus_2B)|_{n-k-1}}\geq \\\nonumber & \left(\left(\frac{|P_F A|_{n-k}}{|P_EA|_{n-k-1}}\right)^2 +
\left(\frac{|P_FB|_{n-k}}{|P_E B|_{n-k-1}}\right)^2\right)^{\frac{1}{2}}\left( \left(\frac{|A|}{|P_F A|_{n-k}}\right)^\frac{2}{k}+
\left(\frac{|B|}{|P_F B|_{n-k}}\right)^\frac{2}{k}\right)^{\frac{k}{2}}.
\end{align}
From H\"older's inequality, we conclude that
\[
\frac{|A\oplus_2B|}{|P_E(A\oplus_2B)|_{n-k-1}}\geq\left(\left(\frac{|A|}{|P_E A|_{n-k-1}}\right)^\frac{2}{k+1}+
\left(\frac{|B|}{|P_E B|_{n-k-1}}\right)^\frac{2}{k+1}\right)^{\frac{k+1}{2}},
\]
which is \eqref{eq:DCTProj} for $k+1$.
\end{proof}

\begin{cor}\label{cor:zon-strong}
Let $k,n$ be a  integer with $1\le k\le n.$ Then  for every $L_2$-zonoids $A$ and $B$   in $\RL^n$ and every zonoids $Z_1,\dots, Z_k$ in $\RL^n,$
\begin{align}\label{DCTmixed2}
&\left(\frac{|A \oplus_2 B|}{V((A \oplus_2 B)[n-k],Z_1, \dots, Z_k)}\right)^{\frac{2}{k}}\geq \\ \nonumber &\left(\frac{|A|}{V(A[n-k],Z_1, \dots, Z_k)}\right)^{\frac{2}{k}} + \left( \frac{|B|}{V(B[n-1],Z_1, \dots, Z_k)}\right)^{\frac{2}{k}},
\end{align}
and thus  the function $f$ defined, for $t\ge0$, by 
\[
f(t)=\frac{|A\oplus_2 \sqrt{t}B|^{\frac{2}{k}}}{|V((A \oplus_2 \sqrt{t}B)[n-k],Z_1, \dots, Z_k)|^{\frac{2}{k}}_{n-k}}
\]
is concave on $\RL_+$. Moreover
\begin{equation}\label{DCT2}
\frac{|A \oplus_2 B|^2}{|\partial (A \oplus_2 B)|^2}\geq \frac{|A|^2}{|\partial A|^2}+\frac{|B|^2}{|\partial B|^2},
\end{equation}
 and thus the function $g$ defined, for $t\ge0$, by 
\[
g(t)=\frac{|A \oplus_2 \sqrt{t}B|^2}{|\partial (A \oplus_2 \sqrt{t}B)|^2_{n-1}}
\]
is concave on $\RL_+$.
\end{cor}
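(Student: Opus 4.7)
The plan is to reduce \eqref{DCTmixed2} to Theorem~\ref{thm:strong-proj} in the special case of segments, and then extend from segments to arbitrary zonoids $Z_1,\dots,Z_k$ via a concavity/superadditivity argument for power means. Specifically, applying Theorem~\ref{thm:strong-proj} with the $(n-k)$-dimensional subspace $E=[u_1,\dots,u_k]^\perp$ for orthonormal $u_1,\dots,u_k$, together with the identity
\[
|P_EX|_{n-k}=\frac{n!}{(n-k)!}V(X[n-k],[0,u_1],\dots,[0,u_k])
\]
from \eqref{eq:projvec}, immediately yields \eqref{DCTmixed2} in the special case $Z_i=[0,u_i]$ with $u_i$'s orthonormal. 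Since both sides of \eqref{DCTmixed2} scale by the common factor $|\det T|$ under any invertible linear transformation $T$ (using $T(A\oplus_2 B)=TA\oplus_2 TB$ and the fact that the class of $L_2$-zonoids is closed under linear maps), this extends to arbitrary linearly independent segments and, by continuity, to arbitrary segments.

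To pass from segments to general zonoids, I would first approximate each $Z_i$ by a zonotope $\sum_{j_i=1}^{N_i}[0,w_{i,j_i}]$. By multilinearity of mixed volumes, $V(X[n-k],Z_1,\dots,Z_k)=\sum_{\vec j}M_{\vec j}^X$, where $M_{\vec j}^X:=V(X[n-k],[0,w_{1,j_1}],\dots,[0,w_{k,j_k}])$, and the inequality already established for segments rearranges to $M_{\vec j}^{AB}\le|A\oplus_2 B|\cdot\Phi(M_{\vec j}^A,M_{\vec j}^B)$ for every multi-index $\vec j$, where
\[
\Phi(P,Q):=\bigl(|A|^{2/k}P^{-2/k}+|B|^{2/k}Q^{-2/k}\bigr)^{-k/2}.
\]
The function $\Phi$ is, up to a positive multiplicative constant, the weighted power mean of $(P,Q)$ with exponent $r=-2/k$. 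Since $r\le 1$, the classical theory of power means implies that $\Phi$ is concave on $\RL_+^2$; being also positively $1$-homogeneous, $\Phi$ is superadditive. Summing the pointwise bound over $\vec j$ and applying superadditivity gives
\[
\sum_{\vec j}M_{\vec j}^{AB}\le|A\oplus_2 B|\cdot\Phi\Bigl(\sum_{\vec j}M_{\vec j}^A,\sum_{\vec j}M_{\vec j}^B\Bigr),
\]
which is exactly \eqref{DCTmixed2} for zonotopes; continuity of mixed volumes in the Hausdorff topology then extends the inequality to general zonoids.

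Inequality \eqref{DCT2} is a direct specialization: take $k=1$ and $Z_1=B_2^n$ in \eqref{DCTmixed2} and combine with \eqref{surface}. This is legitimate because $B_2^n$ is itself a zonoid, via the classical representation $h_{B_2^n}(x)=c_n\int_{S^{n-1}}|\langle x,u\rangle|\,d\sigma(u)$. The concavity of $f$ and $g$ then follows exactly as in the proof of Corollary~\ref{refcorol2}: from $A=(\sqrt{1-\lambda}A)\oplus_2(\sqrt{\lambda}A)$ one derives
\[
A\oplus_2\sqrt{(1-\lambda)s+\lambda t}\,B=\bigl(\sqrt{1-\lambda}(A\oplus_2\sqrt{s}B)\bigr)\oplus_2\bigl(\sqrt{\lambda}(A\oplus_2\sqrt{t}B)\bigr),
\]
and applying \eqref{DCTmixed2} (resp.\ \eqref{DCT2}) to this decomposition, combined with the homogeneity of volume and of mixed volumes (resp.\ surface area), yields the desired concavity.

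The main obstacle I anticipate is the summation step from pointwise inequalities over multi-indices $\vec j$ to the global inequality for zonotopes. The pointwise inequality is essentially immediate from Theorem~\ref{thm:strong-proj} via \eqref{eq:projvec}; the non-trivial ingredient is identifying the correct concave $1$-homogeneous function $\Phi$ (a weighted negative power mean) and invoking the classical concavity of $M_r$ for $r\le 1$. A minor technicality arises for degenerate multi-indices where the segments $w_{1,j_1},\dots,w_{k,j_k}$ are linearly dependent and all the mixed volumes vanish; these contribute trivially to both sides and can simply be removed from the sum.
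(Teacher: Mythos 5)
Your proof is correct and takes essentially the same approach as the paper: reduce to segments via Theorem~\ref{thm:strong-proj} combined with \eqref{eq:projvec}, pass to all independent $k$-tuples of segments by linearity, and then extend to zonotopes by multilinearity of mixed volumes together with superadditivity of the negative power mean (the paper phrases this as the reverse Minkowski inequality for the $-2/k$-norm $\f$, which is the same concavity fact you invoke for $\Phi$). One small difference worth noting: you decompose each $Z_i$ as an ordinary Minkowski sum of segments, which is exactly what multilinearity of mixed volumes requires and what the statement (zonoids, not $L_2$-zonoids) calls for, whereas the paper's proof writes these decompositions with $\oplus_2$ (evidently a typo); your removal of degenerate multi-indices is also a clean substitute for the paper's small-perturbation step.
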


\begin{proof}


First notice that Theorem \ref{thm:strong-proj} may be reformulated in the following way. Let $u_1,\dots, u_k$ be an orthonormal system in $\RL^n$. Then using (\ref{eq:projvec}) we have  
\begin{align}\label{eq:DCTPmixed-vectors}
\left(\frac{|A\oplus_2 B|}{V((A\oplus_2 B)[n-k],[0,u_1],\dots, [0,u_k])}\right)^\frac{2}{k} \geq& \\ \nonumber \left(\frac{|A|}{V(A[n-k],[0,u_1],\dots, [0,u_k])}\right)^\frac{2}{k}&+
\left(\frac{|B|}{V(B[n-k],[0,u_1],\dots, [0,u_k])}\right)^\frac{2}{k}.
\end{align}
Applying a linear transform,   \eqref{eq:DCTPmixed-vectors} holds for any linearly   independent system $u_1,\dots, u_k$. Then, for $x,y\ge0$,  define
\begin{equation}\label{phiconcave}
\f(x,y)=(x^{-\frac{2}{k}}+y^{-\frac{2}{k}})^{-\frac{k}{2}}=\|(x,y)\|_{-\frac{2}{k}}.
\end{equation}
 For a compact convex set $A$ and  $u_1, \dots, u_k\in\RL^n$, let 
\begin{equation}\label{def:psi}
\psi_A(u_1, \dots, u_k)=\frac{V(A[n-k],[0,u_1],\dots, ,[0,u_k])}{|A|}.
\end{equation} 
From (\ref{eq:DCTPmixed-vectors}) we know that, if $u_1, \dots, u_k$  are linerly independent, then 
$$
\psi_{A\oplus_2B}(u_1, \dots, u_k)\le\f(\psi_A(u_1, \dots, u_k),\psi_B(u_1, \dots, u_k)).
$$ 
For $i=1, \dots, k$, let  $Z_i=[0,u_{i,1}]\oplus_2\cdots\oplus_2[0,u_{i,m_i}]$ be a $2$-zonotope. Assume that for any set  of distinct $k$ vectors from the set $\{u_{i,j}\}$ is an independent sequence (this can be achieved by a small perturbation of vectors $u_{i,j}$). Using that $\f$, being a $-\frac{2}{k}$-norm, satisfies the reverse Minkowski inequality, we deduce that 
\begin{align*}
\sum_{i=1}^k\sum_{j_i=1}^{m_i}\psi_{A\oplus_2B}(u_{1, j_1},\dots, u_{k, j_k}) &\le\sum_{i=1}^k\sum_{j_i=1}^{m_i}\f(\psi_A(u_{1, j_1},\dots, u_{k, j_k}),\psi_B(u_{1, j_1},\dots, u_{k, j_k}))\\ &\le \f\left(\sum_{i=1}^k\sum_{j_i=1}^{m_i}\psi_A(u_{1, j_1},\dots, u_{k, j_k}), \sum_{i=1}^k\sum_{j_i=1}^{m_i}\psi_B(u_{1, j_1},\dots, u_{k, j_k})\right).
\end{align*}
Thus we get
\[
\frac{V((A\oplus_2B)[n-k],Z_1,\dots, Z_k)}{|A\oplus_2B|}\le \f\left(\frac{V(A[n-k],Z_1,\dots, Z_k)}{|A|},\frac{V(B[n-k],Z_1,\dots, Z_k)}{|B|}\right),
\]
which is (\ref{DCTmixed2}), for $Z_1,\dots, Z_k$ being a zonotopes. The result for zonoids follows by approximation.
We apply (\ref{DCTmixed2}) with $k=1$ and $Z_1=B_2^n$ to prove (\ref{DCT2}).
The concavity of $f$ and $g$ is  proved with the method used for Corollary~\ref{refcorol2}.

\end{proof}

\subsection{The case $p\neq2$}

\begin{prop} The answer to  Question \ref{queslpzonoids} is 
negative when $p>2$ and $n\ge 2$: a counterexample is given by $A=B_2^n$ and $B=\varepsilon^{\frac{1}{p}}[- v,  v]$, for some $\varepsilon>0$.
\end{prop}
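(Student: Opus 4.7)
The plan is to verify that the proposed pair indeed violates (\ref{lp-sumconj}) by expanding both sides in $\varepsilon$ to first order. Fix unit vectors $v,u\in S^{n-1}$ with $\langle u,v\rangle=0$, and write $B_\varepsilon=\varepsilon^{1/p}[-v,v]$. Because $B_\varepsilon$ is a one-dimensional segment in $\RL^n$, one has $|B_\varepsilon|_n=0$, so the second term $(|B_\varepsilon|/|P_{u^\bot}B_\varepsilon|_{n-1})^p$ in (\ref{lp-sumconj}) is $0$ when $n=2$ (since $|P_{u^\bot}B_\varepsilon|_{n-1}=2\varepsilon^{1/p}>0$) and is of the form $0/0$ when $n\ge 3$. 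I will handle the latter case either via the natural convention $0/0=0$, or more rigorously by replacing $B_\varepsilon$ with a small full-dimensional $L_p$-perturbation and letting this perturbation tend to zero. It then suffices to prove that, for $p>2$ and some $\varepsilon>0$,
\[
\Phi(\varepsilon):=\left(\frac{|A\oplus_p B_\varepsilon|}{|P_{u^\bot}(A\oplus_p B_\varepsilon)|_{n-1}}\right)^p<\Phi(0)=\left(\frac{\omega_n}{\omega_{n-1}}\right)^p,
\]
where $\omega_k:=|B_2^k|$.

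The main ingredient is Lutwak's first variational formula for $L_p$-mixed volumes: for convex bodies $K,L$ containing the origin,
\[
|K\oplus_p\varepsilon^{1/p}L|=|K|+\frac{n\varepsilon}{p}\,V_p(K,L)+O(\varepsilon^2),\qquad V_p(K,L)=\frac{1}{n}\int_{S^{n-1}}h_L^p\,h_K^{1-p}\,dS_K.
\]
Applied with $K=B_2^n$ (so $h_K\equiv 1$ on $S^{n-1}$ and $dS_K$ is the spherical Lebesgue measure) and $L=[-v,v]$, and setting $C_n(p):=\int_{S^{n-1}}|\langle w,v\rangle|^p\,d\sigma(w)$, this yields
\[
|A\oplus_pB_\varepsilon|=\omega_n+\tfrac{\varepsilon}{p}C_n(p)+O(\varepsilon^2).
\]
Since orthogonal projection commutes with $\oplus_p$ at the level of support functions and $v\in u^\bot$, the same expansion in $u^\bot\cong\RL^{n-1}$ gives
\[
|P_{u^\bot}(A\oplus_pB_\varepsilon)|_{n-1}=\omega_{n-1}+\tfrac{\varepsilon}{p}C_{n-1}(p)+O(\varepsilon^2).
\]
Dividing and raising to the $p$-th power produces
\[
\Phi(\varepsilon)=\Phi(0)\,\Bigl(1+\varepsilon\Bigl[\tfrac{C_n(p)}{\omega_n}-\tfrac{C_{n-1}(p)}{\omega_{n-1}}\Bigr]+O(\varepsilon^2)\Bigr),
\]
so the task reduces to showing that $C_n(p)/\omega_n<C_{n-1}(p)/\omega_{n-1}$ precisely when $p>2$.

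Finally, I will verify this inequality in closed form. Using the standard Beta integral $C_n(p)=\frac{2\pi^{(n-1)/2}\Gamma((p+1)/2)}{\Gamma((p+n)/2)}$ together with $\omega_n=\pi^{n/2}/\Gamma(n/2+1)$, a direct rearrangement reduces the sign of the first-order coefficient to the comparison
\[
g\!\left(\tfrac{p+n-1}{2}\right)<g\!\left(\tfrac{n+1}{2}\right),\qquad g(a):=\frac{\Gamma(a)}{\Gamma(a+\tfrac12)}.
\]
Since $(\log g)'(a)=\psi(a)-\psi(a+\tfrac12)<0$ by monotonicity of the digamma function, $g$ is strictly decreasing on $(0,\infty)$, and the above inequality is therefore equivalent to $(p+n-1)/2>(n+1)/2$, i.e.\ $p>2$. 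At $p=2$ one obtains equality, in agreement with Theorem~\ref{thm:Zonoidsstrong2}. The only real obstacle I anticipate is the bookkeeping required to justify discarding the degenerate $0/0$ term in dimensions $n\ge 3$; once that continuity/perturbation step is in place, every other step is an explicit gamma-function computation.
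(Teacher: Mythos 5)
Your proposal is correct and follows essentially the same route as the paper: linearize $|A\oplus_p \varepsilon^{1/p}[-v,v]|$ and its projection via Lutwak's $L_p$-first-variation formula, specialize to $A=B_2^n$, reduce to the explicit moment integrals $\int_{S^{k-1}}|x_1|^p\,dS$, and settle the sign of the first-order coefficient by a Gamma-function comparison (your monotonicity of $g(a)=\Gamma(a)/\Gamma(a+\tfrac12)$ via the digamma is equivalent to the paper's appeal to strict log-convexity of $\Gamma$). The only cosmetic difference is that the paper sidesteps your $0/0$ worry for $n\ge 3$ from the outset by explicitly disproving the weaker monotonicity statement (\ref{lp-sumconjw}), which does not involve the $B$-term; your observation that it suffices to show $\Phi(\varepsilon)<\Phi(0)$ amounts to the same reduction.
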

 \begin{proof} 

Let $p>2$. We disprove the weaker statement
\begin{equation}\label{lp-sumconjw}
\frac{|A\oplus_pB|}{|P_{u^\bot}(A\oplus_pB)|_{n-1}} \geq \frac{|A|}{|P_{u^\bot}A|_{n-1}}.
\end{equation}

We restate (\ref{lp-sumconjw}) with  $B=\varepsilon^{\frac{1}{p}}[- v,  v],$ $v\in S^{n-1}$ and $\varepsilon \to 0.$ 
For this we use the extension of the classical notion of the mixed volumes  introduced by 
Lutwak \cite{Lut93:1, Lut96}, who proved that 
\begin{equation}\label{lut1}
 \lim\limits_{\varepsilon \to 0}\frac{|A\oplus_p(\varepsilon^{\frac{1}{p}} \, B)|- |A|}{\varepsilon}= \frac{1}{p} \int\limits_{S^{n-1}} h^p_B(u)h_A^{1-p} dS_A(u),
\end{equation}
for $p>1$  and  all convex compact sets  $A, B,$ containing the origin and defined
\begin{equation}\label{lut2}
V_p(A[n-1],B)=\frac{1}{n} \int\limits_{S^{n-1}} h^p_B(u)h_A^{1-p} dS_A(u).
\end{equation}
Taking $B=\varepsilon^{\frac{1}{p}}[- v,  v]$ for some $v\in S^{n-1},$ we get
$$
\Big|A\oplus_p B \Big| =|A|+ \frac{\varepsilon }{p}\int\limits_{S^{n-1}} |\langle v, x\rangle |^p h_A^{1-p} dS_A(x)+o(\varepsilon)
$$
Assuming $v$ in $u^\perp,$ we get $$
\Big|P_{u^\perp} A\oplus_p P_{u^\perp} B \Big| =|P_{u^\perp} A|+  \frac{\varepsilon }{p}\int\limits_{S^{n-1}\cap u^\perp} |\langle v, x\rangle |^p h_A^{1-p}(x) dS_{P_{u^\perp}A}(x)+o(\varepsilon).
$$
Assume, by way of contradiction, that   (\ref{lp-sumconjw}) is true for all $L_p$-zonoids $A\subset \RL^n.$ Then,
\begin{equation}\label{limp}
|A| \int\limits_{S^{n-1}\cap u^\perp} |\langle v, x\rangle |^p h_A^{1-p}(x) dS_{P_{u^\perp}A}(x) \le 
|P_{u^\perp}A| \int\limits_{S^{n-1}} |\langle v, x\rangle |^p h_A^{1-p}(x) dS_A(x).
\end{equation}
Now we take $A=B_2^n$, $u=e_2$ and $v=e_1$.  Notice that $B_2^n$ is an $L_p$ zonoid for all $p\ge 1$. Next (\ref{limp}) becomes
\begin{equation}\label{eqball}
|B_2^n| \int_{S^{n-2}} |x_1|^p  dS_{B_2^{n-1}}(x) \le 
|B_2^{n-1}| \int_{S^{n-1}} |x_1 |^p dS_{B_2^n}(x).
\end{equation}
Using polar coordinates and Fubini's theorem, we get 
\[
\int_{S^{n-1}}|x_1|^p dS_{B_2^n}(x)=(n+p)\int_{B_2^{n-1}}|z_1|^pdz=\frac{2\pi^\frac{n-1}{2}\Gamma\left(\frac{p+1}{2}\right)}{\Gamma\left(\frac{p+n}{2}\right)}.
\]
Thus (\ref{eqball}) becomes
$$
\Gamma\left(\frac{n+1}{2}\right)\Gamma\left(\frac{p+n}{2}\right) \le  \Gamma\left(\frac{n+2}{2}\right) \Gamma\left(\frac{p+n-1}{2}\right).
$$
Using the strict $\log$-convexity of $\Gamma$ function the above is only true if and only if $p \le 2$.
\end{proof}



 \begin{prop} 
 Fix $p>1$.  Then  (\ref{lp-sumconj}) does not hold in the  class of all convex symmetric  bodies in $\RL^n$.
 \end{prop}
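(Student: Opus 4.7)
The plan is to mirror the approach of the preceding proposition. I would take $B=\varepsilon^{1/p}[-v,v]$ with $v\in u^\perp$, so that $P_{u^\perp}B=B$ and the term $(|B|/|P_{u^\perp}B|)^p$ in \eqref{lp-sumconj} vanishes, and expand both $|A\oplus_p B|$ and $|P_{u^\perp}(A\oplus_p B)|$ to first order in $\varepsilon$ using Lutwak's formula \eqref{lut1}. The validity of \eqref{lp-sumconj} then reduces to the same necessary condition \eqref{limp} as in the $L_p$-zonoid case; the game is to exhibit, for each $p>1$, a convex symmetric body violating it.

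For $p>2$, the previous proposition already supplies such a body, namely $A=B_2^n$, which is convex and symmetric, so the statement is established on that range. Only $1<p\le 2$ requires new input. For this range I would test non-$L_p$-zonoid perturbations of the Euclidean ball, the natural candidate being the stadium $A_R=B_2^n+[-Re_1,Re_1]$, with $u=e_n$ and $v=e_1$. Its support function, surface area measure and the two boundary integrals appearing in \eqref{limp} are explicitly computable (the curved part contributes a one-variable integral against $(1+R\cos\theta)^{1-p}$ and the flat part gives a point mass whose weight at $v$ vanishes). At $R=0$ we recover $B_2^n$, where \eqref{limp} is an equality at $p=2$; differentiating both sides in $R$ at $R=0$ shows that the left-hand side grows strictly slower than the right-hand side when $p=2$, hence \eqref{limp} fails strictly for small $R>0$. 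A large-$R$ asymptotic expansion of the same integrals then extends the violation to all $p$ exceeding some threshold $p_0(n)$ close to $1$ (pushing $p_0$ further down by enlarging $n$ or by multiplying the stadium with an auxiliary thin factor in additional directions).

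The main obstacle is to cover $p$ very close to $1$, where a first-order perturbation of $B_2^n$ no longer produces a violation. The natural remedy in dimension $n\ge 3$ is to invoke the known failure of the classical $L_1$ inequality \eqref{constrong} for general convex bodies: this supplies an explicit pair $(A_0,B_0)$ disproving \eqref{lp-sumconj} at $p=1$, and a continuity-in-$p$ argument then yields that the same pair continues to disprove \eqref{lp-sumconj} for all $p$ in a right-neighborhood of $1$. Merging this neighborhood with the stadium range $(p_0(n),2]$ and with the range $p>2$ from the preceding proposition covers the whole interval $(1,\infty)$.
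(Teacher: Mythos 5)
Your opening reduction---perturbing by $B=\varepsilon^{1/p}[-v,v]$ with $v\in u^\perp$ and using Lutwak's $L_p$-surface-area formula to arrive at the necessary condition \eqref{limp}---is exactly what the paper does, and your observation that the preceding proposition already disposes of $p>2$ is correct. But the rest of the plan does not close.

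Two concrete gaps. First, the stadium $A_R=B_2^n+[-Re_1,Re_1]$ cannot produce a violation for any $p<2$ by a perturbation at $R=0$: for the ball, \eqref{limp} holds with \emph{strict} inequality whenever $p<2$ (by the strict log-convexity of $\Gamma$ cited in the earlier proof), so a first derivative in $R$ at $R=0$ is irrelevant---continuity preserves the strict inequality for small $R$. You recognize this and appeal to a ``large-$R$ asymptotic'' to push a threshold $p_0(n)$ down toward $1$, but you give no evidence that $p_0(n)<2$, let alone that it approaches $1$; this is an unverified hope, not an argument. Second, and more fatally, your fallback for $p$ near $1$ invokes the known failure of the $L_1$ inequality \eqref{constrong} for general convex bodies and then uses continuity in $p$. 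But \eqref{constrong} \emph{does hold} in $\RL^2$ (Bonnesen's inequality \eqref{eq:bonnesen}), so there is no $L_1$ counterexample in the plane, and thus no continuity argument is available when $n=2$. Since the proposition must be proved for each fixed $n\geq 2$, the case $n=2$, $1<p\leq 2$ is left entirely uncovered by your plan; moreover your suggestion to ``enlarge $n$'' to improve $p_0$ is not permitted, because $n$ is fixed in the statement. Even in $n\geq 3$, you would still need to verify that the $L_1$ counterexamples can be chosen symmetric (the proposition is about symmetric bodies), and you would need the two ranges $(1,1+\delta)$ and $(p_0(n),2]$ to overlap, which is not established.

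The paper avoids all of this with a single uniform construction: in $\RL^2$, take the symmetric octagon $A=\{(x_1,x_2): |x_i|\le 1,\ |x_1\pm x_2|\le 2-a\}$ with $0<a<1$, whose (discrete) surface area measure makes both sides of the necessary condition \eqref{2dimp} (the $n=2$ specialization of \eqref{limp}) explicitly computable; choosing $a<2-2^{1/p}$ violates it, and this is possible for \emph{every} $p>1$. The extension to $\RL^n$ is then handled in one step using the $L_p$-direct-sum volume identity \eqref{lpdirectsum} with $A_n=B_q^{n-2}\oplus_p A_2$, rather than by trying to construct a fresh counterexample in each dimension. The key insight your proposal misses is that a polytopal example with a free parameter $a$ can be tuned to beat the inequality for every $p>1$ at once, whereas smooth perturbations of the ball can only be expected to work near the critical exponent $p=2$.
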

\begin{proof} Let us first construct an example in $\RL^2$. We note that
$$
\Big|P_{u^\perp} A\oplus_p P_{u^\perp} \varepsilon^{\frac{1}{p}}[- v,  v] \Big|=2h_A(v)+\frac{2\varepsilon}{p}h_A(v)^{1-p}+o(\varepsilon).
$$
 Thus (\ref{lp-sumconjw}), for $n=2$, would imply
 \begin{equation}\label{2dimp}
 |A|  \le 
h_A(v)^{p} \int\limits_{S^1} |\langle v, x\rangle |^p h_A^{1-p}(x) dS_A(x).
 \end{equation}
Consider $a \in (0, 1),$ let 
$
A=\{(x_1, x_2) \in \RL^2, |x_i|\le 1, |x_1\pm x_2|\le 2-a \}.
$
Then $|A|=4-2a^2$. We  check  (\ref{2dimp}) with $v=e_1$. We first note that $h_A(e_1)=1$. Next we  compute 
$$
f(x)=|\langle e_1, x\rangle |^p h_A^{1-p}(x) S_A(x)
$$
for different normal vectors $x$ of $A$. We first note that $f(\pm e_2)=0$,  $
f(\pm e_1)=2(1-a)$ and 
$ 
f((\pm 1/\sqrt{2}, \pm 1/\sqrt{2}))=a(2-a)^{1-p}.
$
Thus to contradict (\ref{2dimp}) we must select $a$ such that 
$$
4-2a^2 > 4(1-a)+ 4 a(2-a)^{1-p},
$$
or $a< 2-2^{1/p},$ which is possible for every $p>1$.

To build a  counterexample in $\RL^n$ for $n\ge 3$, we use the fact that if $K$ is a convex body in ${\rm span}\{e_1,\dots, e_{n_1}\}$ and $L$ is a convex body in ${\rm span}\{e_{n_1+1}, \dots, e_{n_1+n_2}\}$, then
\begin{equation}\label{lpdirectsum}
|K\oplus_p L|= \frac{\Gamma(\frac{n_1}{q}+1)\Gamma(\frac{n_2}{q}+1)}{\Gamma(\frac{n_1+n_1}{q}+1)} |K||L|=c_{n_1, n_2, q}|K||L|,
\end{equation}
where $1/p+1/q=1$. Let $A_n=B_q^{n-2}\oplus_p A_2$, where $A_2 \subset {\rm span}\{e_1, e_2\}$
is the counterexample created above, and $B^{n-2}_q\subset {\rm span}\{e_3, \dots, e_{n}\}$.
Then (\ref{lpdirectsum}) gives
$$
\frac{|A_n\oplus_p [-v, v]|}{|P_{e_1^\bot}(A_n\oplus_p[-v, v])|_{n-1}} =
\frac{|B_q^{n-2}\oplus_p (A_2\oplus_p[-v, v])|}{|B_q^{n-2}\oplus_p P_{e_1^\bot}( A_2\oplus_p [-v, v])|_{n-1}}< 
\frac{|A_n|}{|P_{e_1^\bot}A_n|_{n-1}}.
$$
\end{proof}

Let us note that the direct interpretation of the volume of $L_p$-zonotopes in terms of determinants is only possible in the case when $p=1$ or $p=2$ \cite{GJ97}, thus it is natural to ask if the determinant inequality that we proved in the case $p=2$ is still true in the case $p\not = 2:$

\begin{ques}\label{question:determ}
Let $p \ge 1$, consider $N\ge N' \ge n$ and  a sequence for vectors $\{u_i\}_{i=1}^N$  in $\RL^n$ is it true that
\begin{align}
 \frac{\sum\limits_{M\subset\{1,\dots,N'\}, |M|=n}|\det(\{u_{m}\}_{m \in M}|^p} {\sum\limits_{M\subset\{1,\dots,N'\},|M|= n-1}|\det(\{P_{e_1^\perp}u_{m}\}_{m \in M})|^p}& +  
\frac{\sum\limits_{L\subset\{N'+1,\dots,N\},|L|= n}|\det(\{u_{l}\}_{l \in L}|^p} {\sum\limits_{L\subset\{N'+1,\dots,N\}, |L|= n-1}|\det(\{P_{e_1^\perp}u_{l}\}_{l \in L}|^p} \nonumber \\
&\le \frac{\sum\limits_{M\subset\{1,\dots,N\},|M|= n}|\det(\{u_{m}\}_{m\in M}|^p}{\sum\limits_{M\subset\{1,\dots,N\}, |M|= n-1}|\det(\{P_{e_1^\perp}(u_{m})\}_{m\in M}|^p}?
\label{eq:determinantsstrongp}
\end{align}
\end{ques}

Notice that the above question also has a negative answer when $p>2$. Let  $N=n=2,$ $N'=1$ and the  matrix 
$$
\begin{pmatrix}
1 & -1& 0\\
1 & 1 & 1 
\end{pmatrix}.
$$
Then  (\ref{eq:determinantsstrongp}) becomes
$$
\frac{2^p}{1+1} \le \frac{2^p+1+1}{1+1+1},
$$
which is false for $p>2$.

\bibliographystyle{plain}
\bibliography{pustak}

\begin{thebibliography}{10}

\bibitem{AAGHV17}
D.~Alonso-Guti\'{e}rrez, S.~Artstein-Avidan, B.~Gonz\'{a}lez~Merino, C.~H.
  Jim\'{e}nez, and R.~Villa.
\newblock Rogers-{S}hephard and local {L}oomis-{W}hitney type inequalities.
\newblock {\em Math. Ann.}, 374(3-4):1719--1771, 2019.

\bibitem{AFO14}
S.~Artstein-Avidan, D.~Florentin, and Y.~Ostrover.
\newblock Remarks about mixed discriminants and volumes.
\newblock {\em Commun. Contemp. Math.}, 16(2):1350031, 14, 2014.

\bibitem{BB12}
P.~Balister and B.~Bollob\'as.
\newblock Projections, entropy, and sumsets.
\newblock {\em Combinatorica}, 32(2):125--141, 2012.

\bibitem{BNT16}
K.~Ball, P.~Nayar, and T.~Tkocz.
\newblock A reverse entropy power inequality for log-concave random vectors.
\newblock {\em Studia Math.}, 235(1):17--30, 2016.

\bibitem{BM21}
F.~Barthe and M.~Madiman.
\newblock {Volumes of subset Minkowski sums and the Lyusternik region}.
\newblock {\em Preprint, {\tt arXiv:2112.06518}}, 2021.

\bibitem{BM11:cras}
S.~Bobkov and M.~Madiman.
\newblock Dimensional behaviour of entropy and information.
\newblock {\em C. R. Acad. Sci. Paris S\'er. I Math.}, 349:201--204, F\'evrier
  2011.

\bibitem{BM12:jfa}
S.~Bobkov and M.~Madiman.
\newblock Reverse {B}runn-{M}inkowski and reverse entropy power inequalities
  for convex measures.
\newblock {\em J. Funct. Anal.}, 262:3309--3339, 2012.

\bibitem{BMW11}
S.~G. Bobkov, M.~Madiman, and L.~Wang.
\newblock Fractional generalizations of {Y}oung and {B}runn-{M}inkowski
  inequalities.
\newblock In C.~Houdr{\'e}, M.~Ledoux, E.~Milman, and M.~Milman, editors, {\em
  Concentration, Functional Inequalities and Isoperimetry}, volume 545 of {\em
  Contemp. Math.}, pages 35--53. Amer. Math. Soc., 2011.

\bibitem{BM13}
S.~G. Bobkov and M.~M. Madiman.
\newblock On the problem of reversibility of the entropy power inequality.
\newblock In {\em Limit theorems in probability, statistics and number theory},
  volume~42 of {\em Springer Proc. Math. Stat.}, pages 61--74. Springer,
  Heidelberg, 2013.
\newblock Available online at {\tt arXiv:1111.6807}.

\bibitem{BL91}
B.~Bollob{\'a}s and I.~Leader.
\newblock Compressions and isoperimetric inequalities.
\newblock {\em J. Combinatorial Theory Ser. A}, 56(1):47--62, 1991.

\bibitem{Bon29:book}
T.~Bonnesen.
\newblock {\em Les probl\`emes de isop\'erim\`etres et des is\'epiphanes}.
\newblock Imprimerie Gauthier Villars et Cie, 1929.

\bibitem{BM21:1}
S.~Brazitikos and F.~McIntyre.
\newblock Vector-valued maclaurin inequalities.
\newblock {\em Commun. Contemp. Math.}, 2021.

\bibitem{CO84}
R.~Connelly and S.~J. Ostro.
\newblock Ellipsoids and lightcurves.
\newblock {\em Geom. Dedicata}, 17(1):87--98, 1984.

\bibitem{CC84}
M.H.M. Costa and T.M. Cover.
\newblock On the similarity of the entropy power inequality and the
  {B}runn-{M}inkowski inequality.
\newblock {\em IEEE Trans. Inform. Theory}, 30(6):837--839, 1984.

\bibitem{Cou18}
T.~A. Courtade.
\newblock A strong entropy power inequality.
\newblock {\em IEEE Trans. Inform. Theory}, 64(4, part 1):2173--2192, 2018.

\bibitem{CT91:book}
T.M. Cover and J.A. Thomas.
\newblock {\em Elements of Information Theory}.
\newblock J. Wiley, New York, 1991.

\bibitem{DCT91}
A.~Dembo, T.M. Cover, and J.A. Thomas.
\newblock Information-theoretic inequalities.
\newblock {\em IEEE Trans. Inform. Theory}, 37(6):1501--1518, 1991.

\bibitem{Eme75}
W.~R. Emerson.
\newblock Averaging strongly subadditive set functions in unimodular amenable
  groups. {I}.
\newblock {\em Pacific J. Math.}, 61(2):391--400, 1975.

\bibitem{Fen36}
W.~Fenchel.
\newblock {Generalisation du theoreme de Brunn et Minkowski concernant les
  corps convexes}.
\newblock {\em C. R. Acad. Sci. Paris.}, 203:764--766, 1936.

\bibitem{Fir62}
Wm.~J. Firey.
\newblock {$p$}-means of convex bodies.
\newblock {\em Math. Scand.}, 10:17--24, 1962.

\bibitem{FH05}
S.~Foldes and P.~L. Hammer.
\newblock Submodularity, supermodularity, and higher-order monotonicities of
  pseudo-{B}oolean functions.
\newblock {\em Math. Oper. Res.}, 30(2):453--461, 2005.

\bibitem{FGM03}
M.~Fradelizi, A.~Giannopoulos, and M.~Meyer.
\newblock Some inequalities about mixed volumes.
\newblock {\em Israel J. Math.}, 135:157--179, 2003.

\bibitem{FMMZ16}
M.~Fradelizi, M.~Madiman, A.~Marsiglietti, and A.~Zvavitch.
\newblock {Do Minkowski averages get progressively more convex?}
\newblock {\em C. R. Acad. Sci. Paris S\'er. I Math.}, 354(2):185--189,
  February 2016.

\bibitem{FMZ22}
M.~Fradelizi, M.~Madiman, and A.~Zvavitch.
\newblock Sumset estimates in convex geometry.
\newblock {\em Preprint}, 2022.

\bibitem{FM14}
M.~Fradelizi and A.~Marsiglietti.
\newblock On the analogue of the concavity of entropy power in the
  {B}runn-{M}inkowski theory.
\newblock {\em Adv. in Appl. Math.}, 57:1--20, 2014.

\bibitem{Gar02}
R.~J. Gardner.
\newblock The {B}runn-{M}inkowski inequality.
\newblock {\em Bull. Amer. Math. Soc. (N.S.)}, 39(3):355--405 (electronic),
  2002.

\bibitem{GHP02}
A.~Giannopoulos, M.~Hartzoulaki, and G.~Paouris.
\newblock On a local version of the {A}leksandrov-{F}enchel inequality for the
  quermassintegrals of a convex body.
\newblock {\em Proc. Amer. Math. Soc.}, 130(8):2403--2412, 2002.

\bibitem{GJ97}
Y.~Gordon and M.~Junge.
\newblock Volume formulas in {$L_p$}-spaces.
\newblock {\em Positivity}, 1(1):7--43, 1997.

\bibitem{HHS12}
M.~Henk, M.~A. Hern\'andez~Cifre, and E.~Saor\'\i~n.
\newblock Steiner polynomials via ultra-logconcave sequences.
\newblock {\em Commun. Contemp. Math.}, 14(6):1250040, 16, 2012.

\bibitem{Kat07}
V.~Katsnelson.
\newblock On {H}. {W}eyl and {H}. {M}inkowski polynomials.
\newblock {\em Sib. \`Elektron. Mat. Izv.}, 4:169--237, 2007.

\bibitem{Lut93:1}
E.~Lutwak.
\newblock The {B}runn-{M}inkowski-{F}irey theory. {I}. {M}ixed volumes and the
  {M}inkowski problem.
\newblock {\em J. Differential Geom.}, 38(1):131--150, 1993.

\bibitem{Lut96}
E.~Lutwak.
\newblock The {B}runn-{M}inkowski-{F}irey theory. {II}. {A}ffine and geominimal
  surface areas.
\newblock {\em Adv. Math.}, 118(2):244--294, 1996.

\bibitem{LYZ04}
E.~Lutwak, D.~Yang, and G.~Zhang.
\newblock Moment-entropy inequalities.
\newblock {\em Ann. Probab.}, 32(1B):757--774, 2004.

\bibitem{MG19}
M.~Madiman and F.~Ghassemi.
\newblock Combinatorial entropy power inequalities: A preliminary study of the
  {S}tam region.
\newblock {\em IEEE Trans. Inform. Theory}, 65(3):1375--1386, March 2019.
\newblock Available online at {\tt arXiv:1704.01177}.

\bibitem{MK18}
M.~Madiman and I.~Kontoyiannis.
\newblock {Entropy bounds on abelian groups and the Ruzsa divergence}.
\newblock {\em IEEE Trans. Inform. Theory}, 64(1):77--92, January 2018.
\newblock Available online at {\tt arXiv:1508.04089}.

\bibitem{MMX17:0}
M.~Madiman, J.~Melbourne, and P.~Xu.
\newblock Forward and reverse entropy power inequalities in convex geometry.
\newblock In E.~Carlen, M.~Madiman, and E.~M. Werner, editors, {\em Convexity
  and Concentration}, volume 161 of {\em IMA Volumes in Mathematics and its
  Applications}, pages 427--485. Springer, 2017.
\newblock Available online at {\tt arXiv:1604.04225}.

\bibitem{MMX17:1}
M.~Madiman, J.~Melbourne, and P.~Xu.
\newblock Rogozin's convolution inequality for locally compact groups.
\newblock {\em Preprint, {\tt arXiv:1705.00642}}, 2017.

\bibitem{MT10}
M.~Madiman and P.~Tetali.
\newblock Information inequalities for joint distributions, with
  interpretations and applications.
\newblock {\em IEEE Trans. Inform. Theory}, 56(6):2699--2713, June 2010.

\bibitem{NT17}
P.~Nayar and T.~Tkocz.
\newblock Personal communication.
\newblock 2017.

\bibitem{Plu70}
H.~Pl{\"u}nnecke.
\newblock Eine zahlentheoretische {A}nwendung der {G}raphentheorie.
\newblock {\em J. Reine Angew. Math.}, 243:171--183, 1970.

\bibitem{Riv07}
I.~Rivin.
\newblock Surface area and other measures of ellipsoids.
\newblock {\em Adv. in Appl. Math.}, 39(4):409--427, 2007.

\bibitem{Ruz89}
I.~Z. Ruzsa.
\newblock An application of graph theory to additive number theory.
\newblock {\em Scientia Ser. A Math. Sci. (N.S.)}, 3:97--109, 1989.

\bibitem{Sch93:book}
R.~Schneider.
\newblock {\em Convex bodies: the {B}runn-{M}inkowski theory}, volume~44 of
  {\em Encyclopedia of Mathematics and its Applications}.
\newblock Cambridge University Press, Cambridge, 1993.

\bibitem{Sch14:book}
R.~Schneider.
\newblock {\em Convex bodies: the {B}runn-{M}inkowski theory}, volume 151 of
  {\em Encyclopedia of Mathematics and its Applications}.
\newblock Cambridge University Press, Cambridge, expanded edition, 2014.

\bibitem{SZ16}
I.~Soprunov and A.~Zvavitch.
\newblock Bezout inequality for mixed volumes.
\newblock {\em Int. Math. Res. Not. IMRN}, (23):7230--7252, 2016.

\bibitem{SV00}
S.~J. Szarek and D.~Voiculescu.
\newblock Shannon's entropy power inequality via restricted {M}inkowski sums.
\newblock In {\em Geometric aspects of functional analysis}, volume 1745 of
  {\em Lecture Notes in Math.}, pages 257--262. Springer, Berlin, 2000.

\bibitem{Top98:book}
D.~M. Topkis.
\newblock {\em Supermodularity and complementarity}.
\newblock Frontiers of Economic Research. Princeton University Press,
  Princeton, NJ, 1998.

\bibitem{WM14}
L.~Wang and M.~Madiman.
\newblock Beyond the entropy power inequality, via rearrangements.
\newblock {\em IEEE Trans. Inform. Theory}, 60(9):5116--5137, September 2014.

\end{thebibliography}

\noindent Matthieu Fradelizi \\
Univ Gustave Eiffel, Univ Paris Est Creteil, CNRS, LAMA UMR8050 F-77447 Marne-la-Vall\'ee, France\\
E-mail address: matthieu.fradelizi@univ-eiffel.fr 

\vspace{0.8cm}

\noindent Mokshay~Madiman\\
University of Delaware\\
Department of Mathematical Sciences\\
501 Ewing Hall \\
Newark, DE 19716, USA\\
E-mail address: madiman@udel.edu

\vspace{0.8cm}

\noindent Mathieu Meyer \\
Univ Gustave Eiffel, Univ Paris Est Creteil, CNRS, LAMA UMR8050 F-77447 Marne-la-Vall\'ee, France\\
E-mail address:  mathieu.meyer@univ-eiffel.fr

\vspace{0.8cm}

\noindent Artem Zvavitch \\
Department of Mathematical Sciences \\
Kent State University \\
Kent, OH 44242, USA \\
E-mail address: zvavitch@math.kent.edu

\end{document}